\title{Non-Archimedean Whittaker functions as characters: a probabilistic approach to the Shintani-Casselman-Shalika formula}
\author{Reda \textsc{Chhaibi}        \footnote{\texttt{reda.chhaibi@math.uzh.ch}} } 
\date{}
\DeclareMathOperator{\Card}{ \textrm{Card} }
\DeclareMathOperator{\ch}{ \textrm{ch} }
\DeclareMathOperator{\eqlaw}{\stackrel{\Lc}{=}}
\DeclareMathOperator{\Hom}{ \textrm{Hom} }
\DeclareMathOperator{\Id}{Id}
\DeclareMathOperator{\height}{ht}
\DeclareMathOperator{\Inv}{Inv}
\DeclareMathOperator{\Ker}{Ker}
\DeclareMathOperator{\val}{val}
\def\half{\frac{1}{2}}
\def\N{{\mathbb N}}
\def\Z{{\mathbb Z}}
\def\Q{{\mathbb Q}}
\def\R{{\mathbb R}}
\def\C{{\mathbb C}}
\def\F{{\mathbb F}}
\def\P{{\mathbb P}}
\def\E{{\mathbb E}}
\def\G{{\mathbb G}}
\def\Ac{{\mathcal A}}
\def\Fc{{\mathcal F}}
\def\Hc{{\mathcal H}}
\def\Kc{{\mathcal K}}
\def\Lc{{\mathcal L}}
\def\Oc{{\mathcal O}}
\def\Sc{{\mathcal S}}
\def\Uc{{\mathcal U}}
\def\Wc{{\mathcal W}}
\def\gfrak{{\mathfrak g}}
\def\nfrak{{\mathfrak n}}
\numberwithin{equation}{section}
\numberwithin{figure}{section}
\newtheorem{theorem}{Theorem}[section]
\newtheorem{proposition}[theorem]{Proposition}
\newtheorem{corollary}[theorem]{Corollary}
\newtheorem{hypothesis}[theorem]{Hypothesis}
\newtheorem{definition}[theorem]{Definition}
\newtheorem{example}[theorem]{Example}
\newtheorem{lemma}[theorem]{Lemma}
\newtheorem{rmk}[theorem]{Remark}
\newcommand\iid{i.i.d.}
\begin{document}
\maketitle

\begin{abstract}
Let $G$ be reductive group over a non-Archimedean local field (e.g $GL_n( \mathbb{Q}_p )$ ) and $G^\vee( \mathbb{C} )$ its  Langlands dual. Jacquet's Whittaker function on $G$ is essentially proportional to the character of an irreducible representation of $G^\vee( \mathbb{C} )$  (a Schur function if $G = GL_n( \mathbb{Q}_p )$). We propose a probabilistic approach to this claim, known as the Shintani-Casselman-Shalika formula, when the group $G$ has at least one minuscule cocharacter.

Thanks to random walks on the group, we start by establishing a Poisson kernel formula for the non-Archimedean Whittaker function. The expression and its ingredients are similar to the one previously obtained by the author in the Archimedean case, hence a unified point of view.
\end{abstract}
{\bf MSC 2010 subject classifications:} 11F70, 11F85, 60B15, 60J45, 60J50\\
{\bf Keywords: Random walks on totally disconnected groups, Reflection principle for minuscule walks, Poisson kernel formula for Jacquet's non-Archimedean Whittaker function, Representations of the unramified principal series, Shintani-Casselman-Shalika formula}
\newpage

\setcounter{tocdepth}{4}
\tableofcontents
\newpage

\section{Introduction}
\label{section:introduction}

Following Jacquet \cite{Jacquet67}, the Whittaker function is a function on a reductive group over a local field and is defined as an integral. Its relevance to number theory comes from the fact that it plays an important role in proving functional equations for $L$-functions arising from automorphic forms (see for instance Cogdell's lectures \cite{Cogdell}). Here, we are interested in the case of a non-Archimedean field (e.g the $p$-adic field $\Q_p$). We recall in the preliminaries how non-Archimedean Whittaker functions appear in the study of representations from the unramified principal series, as well as some of its basic properties. 

The non-Archimedean Whittaker function is a function on the group that essentially reduces to a function on the coweight lattice, and vanishes if the coweight is not dominant. A first striking result due to Shintani \cite{Shintani76} is that the Whittaker function for $GL_n\left( \Q_p \right)$ is essentially a Schur function for a partition with at most $n$ rows. One has simply to interpret the dominant coweight as a partition. Shintani's proof relies on the crucial fact that the (renormalised) $p$-adic Whittaker function satisfies the same difference equations as Schur functions - namely the Pieri rule. In a way, he does not invoke the integral expression at all. Later, Casselman and Shalika \cite{CS} generalized the result to a reductive group $G$ by proving it is proportional to a character of the Langlands dual $G^\vee\left( \C \right)$. Their proof on the other hand heavily uses integral transforms. 

In this paper, we aim at giving a probabilistic understanding of the Shintani-Casselman-Shalika formula (SCS later in the text), in a similar fashion to the description of the Archimedean Whittaker function in \cite{thesis} and \cite{chh14c}. Although the objects at hand are different, we will use similar notations to the Archimedean case. The approach is original and has many advantages, finding its source in probabilistic potential theory. The general idea is that the SCS formula holds because of the harmonic properties of Whittaker functions, rather than because of their definition as an integral. In fact, our approach is the natural extension of Shintani's work if one considers a convolution in a totally disconnected group as the application of a difference operator. We push the investigation further by analyzing the underlying random walks, for which the non-Archimedean Whittaker functions are harmonic. Similarly, in \cite{thesis} and \cite{chh14c}, Archimedean Whittaker functions are constructed as harmonic functions for a certain hypoelliptic Brownian motion on $G(\C)$ and it is now understood that they are, in a sense, characters associated to $G^\vee$-geometric crystals. In both cases, the starting point is a Poisson kernel formula established thanks to random walks. This way, we give a unified point of view, while emphasising why Whittaker functions on $G$ are characters for $G^\vee$, the Langlands dual.

We will work under the hypothesis that the group $G$ has a minuscule cocharacter, which allows a particularly convenient framework. In the preliminaries (Section \ref{section:preliminaries}), we set the necessary group theoretic notations and introduce two random walks. Both depend on a parameter $z$ which can be interpreted as a drift from the point of view of probability theory and a Satake parameter from the point of view of representation theory. $W^{(z)}$ is a random walk on the cocharacter lattice and $\left( B_t\left( W^{(z)} \right) ; t \in \N \right)$ is a random walk on the Borel subgroup $B$, driven by $W^{(z)}$. The precise definition is given by equations \eqref{eq:B_def_1} and \eqref{eq:B_def_2}.

The first main Theorem \ref{thm:main_result1} can be stated independently, and in fact does not require any knowledge of Whittaker functions. It is a probabilistic characterization of functions on the group $G$ that are $G^\vee$-characters. The theorem claims that, under the appropriate growth conditions, if we consider $f_z$ to be any function that is harmonic for the random walk $\left( B_t\left( W^{(z)} \right) ; t \in \N \right)$ and with the same invariance properties as the Whittaker function, then $f_z$ \emph{has to be} a $G^\vee$-character. The proof is broken down into three steps. 
As announced before, there is a Poisson kernel formula for any such $f_z$ (Proposition \ref{proposition:poisson_formula}). A simple manipulation akin to tropicalization relates $f_z$ to the probability that $W^{(z)}$ remains in the Weyl chamber (Proposition \ref{proposition:local2survival}). Finally, this latter probability is evaluated by implementing a version of the classical reflection principle which forces the appearance of the Weyl character formula, hence giving a $G^\vee$-character (Proposition \ref{proposition:probabilistic_char}). The reflection principle is explained separately in section \ref{section:reflection_principle}. 

The second main Theorem \ref{thm:main_result2} claims that the actual Whittaker function satisfies the harmonicity hypothesis of Theorem \ref{thm:main_result1}, modulo a renormalisation by the modular character. The other hypotheses are easier to establish. Of course, the combination of the two yields the SCS formula (Theorem \ref{thm:CS_formula}). This second main theorem is the focus of section \ref{section:harmonicity_properties}. 
There, we will see that the random walk $\left( B_t\left( W^{(z)} \right) ; t \in \N \right)$ we defined explicitly is not ad hoc: it is obtained by only considering the Borel part of a spherical random walk and penalizing it by the value of its ``diagonal'' part. We called such a random walk spherical because its increments have a distribution that can be understood as elements from the spherical Hecke algebra. We will require knowledge of the Satake isomorphism (subsection \ref{subsection:satake}) and geometric information about double strata from the Cartan and Iwasawa decompositions. It is well-known that this information is encoded in Macdonald's spherical function, which we briefly review in subsection \ref{subsection:macdonald}.

We hope that this paper will serve as a bridge between usually disjoint fields of mathematics and a stepping stone for future work.

\section{Setting}
\label{section:preliminaries}

Let $\Kc$ be a non-Archimedean local field i.e a field with a discrete valuation denoted by $\val: \Kc \rightarrow \Z$. We assume further that it is complete and locally compact. A standard reference for the properties of such fields is \cite{Cassels}. Write $\Oc \subset \Kc$ for the ring of integers. We make the choice of a uniformizing element $\varpi \in \left\{ x \in \Kc \ | \ \val(x) = 1 \right\}$. The residue field $\Oc / \varpi \Oc$ is finite with cardinality $q \geq 2$. The absolute value $|\cdot|$ on $\Kc$ is given by $|x| = q^{ -\val(x)}$ for any $x \in \Kc$. The two examples to have in mind are finite extensions of the $p$-adic field $\Q_p$ and the field $\F_q((T))$ of formal Laurent series over the finite field $\F_q$.

If $M$ is a Chevalley group with coefficients in $\Kc$ or a homogenous space, we will denote by $\Fc\left( M \right)$ the $\C$-vector space of locally constant functions on $M$, with compact support.

\subsection{Lie theory}
As usual the multiplicative and additive groups are respectively written $\G_m$ and $\G_a$. Let ${\bf G}$ be a split reductive group (scheme) over $\Z$, as constructed by Chevalley (see for e.g \cite{Steinberg}). We fix a maximal torus ${ \bf T} \approx \left(\G_m\right)^r$, where $r$ is the rank of the group. $G := {\bf G}\left( \Kc \right)$ is its set of $\Kc$-points and $T = {\bf T}\left( \Kc \right) \approx \left(\Kc^*\right)^r$. Let $\left( X, \Phi, X^\vee, \Phi^\vee \right)$ be the associated root datum, meaning that $\Phi$ (resp. $\Phi^\vee$) are the roots (resp. coroots) and the lattice of algebraic characters (resp. cocharacters) of ${ \bf T}$ is $X$ (resp. $X^\vee$):
$$ X = \Hom\left( { \bf T}, \G_m \right) \quad \textrm{ and } \quad X^\vee = \Hom\left( \G_m, { \bf T} \right).$$
We will use an additive notation for the group operation in the $\Z$-modules $X$ and $X^\vee$. Hence we will favor the exponential notation where the image of $t \in T$ by a character $\mu$ is $t^\mu$ and the image of $k \in \G_m$ by $\mu^\vee \in X^\vee$ is $k^{\mu^\vee}$. The canonical pairing between $X$ and $X^\vee$ is denoted by $\langle \cdot, \cdot \rangle$ and is obtained by composing a character $\mu$ and a cocharacter $\lambda^\vee$. It yields $\mu \circ \lambda^\vee: k \mapsto k^{ \langle \mu, \lambda^\vee \rangle } \in \Hom\left( \G_m, \G_m \right) \approx \Z$. The pairing is extended by bilinearity from $X \times X^\vee$ to $X \otimes_\Z \C \times X^\vee \otimes_\Z \C$.

Inside of $\Phi$, we make the choice of a set of positive roots $\Phi^+$ and therefore of a simple root system $\Delta \subset \Phi^+ \subset \Phi$. Also $Q$ is the root lattice spanned by $\Phi$ and $Q \subset P$, where $P$ the weight lattice in duality with the coroot lattice $Q^\vee$. Let $W$ be the Weyl group acting on both $X$ and $X^\vee$. When acting on $X$, $W$ is generated as a Coxeter group by the simple reflections $\left( s_\alpha \right)_{\alpha \in \Delta}$:
$$ \forall x \in X, s_\alpha(x) := x - \langle \alpha^\vee, x \rangle \alpha \ .$$
This action is linearly extended to $X \otimes_\Z \R$ and its fundamental domain is the Weyl chamber:
$$ C := \left\{ x \in X \otimes_\Z \R \ | \ \forall \alpha^\vee \in \Delta, \langle x, \alpha^\vee \rangle \geq 0 \right\}$$
Its interior is denoted by $\overset{\circ}{C}$. The Weyl vector is $ \rho = \half \sum_{\beta \in \Phi^+} \beta$. The length function $\ell: W \rightarrow \N$ gives the minimal number of reflections needed to write an element $w \in W$ as a product of simple reflections. The longest element in $W$ is denoted by $w_0$.

We use similar notations for coweights. The symbols $C^\vee$, $\rho^\vee$, $Q^\vee$ and $P^\vee$ are defined in the same fashion. It is well-known that
$$ Q \subset X \subset P \Leftrightarrow Q^\vee \subset X^\vee \subset P^\vee \ .$$
Such a notation is standard, although perhaps confusing because $P^\vee$ is the dual of $Q$ while $Q^\vee$ is the dual of $P$. The isogeny type of $G$ is determined by $X / Q \subset P / Q$. The Langlands dual $G^\vee\left( \C \right)$ is the connected reductive group over $\C$ whose root datum is $\left( X^\vee, \Phi^\vee, X, \Phi \right)$. The highest weight $G^\vee$-module with highest weight $\lambda^\vee \in \left(X^\vee\right)^+$ is written $V\left( \lambda^\vee \right)$ and its character is denoted by $\ch V\left( \lambda^\vee \right) \in \C\left[ X^\vee \right]^W$.

At the level of the group, our choice of positive roots gives a pair of opposite Borel subgroups $\left( B, B^+ \right)$. $N$ is the unipotent radical of the lower Borel subgroup $B$, while $U$ is the unipotent radical of the upper Borel subgroup $B^+$, hence $B = NT$ and $B^+ = TU$. We will mainly work in the lower Borel subgroup $B$ and its unipotent subgroup $N$. For every root $\beta \in \Phi$, we have a group homomorphism $x_\beta: \G_a \rightarrow G$ coming from a pinning and the group $G$ is generated by the subgroups $\left( x_\beta\left( \Kc \right) \right)_{\left( \beta \in \Phi \right)}$ (\textsection 3 in \cite{Steinberg}). We fix this pinning throughout the paper.

The image of the uniformizing element $\varpi \in \Oc$ by the cocharacter $\mu^\vee \in X^\vee$ is denoted by $\varpi^{\mu^\vee} \in T$. This way, one has an embedding of $X^\vee$ into $T$ with image
$$ A := \bigsqcup_{\mu^\vee \in X^\vee} \varpi^{-\mu^\vee} \ .$$

Let $K = { \bf G}(\Oc)$ be the special maximal compact open group of integral points (\cite{Tits} 3.8.1). It enters in various decompositions that we will often use and that are established thanks to the Bruhat-Tits theory of buildings \cite{Tits}. Here are the non-Archimedean analogues of respectively the singular value decomposition and the Gram-Schmidt decomposition, if we were in the context of the general linear group over $\R$ or $\C$:

\begin{align}
\label{eq:cartan_decomposition} 
\textrm{ (Cartan decomposition)  } & G = \bigsqcup_{\lambda^\vee \in \left( X^{\vee} \right)^+} K \varpi^{-\lambda^\vee} K \ , \\
\label{eq:iwasawa_decomposition} 
\textrm{ (Iwasawa decomposition) } & G = \bigsqcup_{\mu^\vee \in X^{\vee}} N \varpi^{-\mu^\vee} K = NAK \ .
\end{align}

The groups $T\left( \Oc \right)$, $B\left( \Oc \right)$ and $N\left( \Oc \right)$ correspond to the respective subgroups of $T$, $B$ and $N$ obtained by intersecting with $K$. Of course, because we are dealing with split groups, these are integral points of the corresponding algebraic groups (e.g $T\left( \Oc \right) = {\bf T}\left( \Oc \right)$).

For the convenience of the reader, let us recall the definition of a minuscule coweight:
\begin{definition}[\cite{Bourbaki}, Chapitre VIII, \textsection 7, Proposition 6]
If the root datum is irreducible, a dominant coweight $\lambda^\vee \in X^\vee$ is called minuscule if it is non-zero and satisfies one of the equivalent conditions:
\begin{itemize}
 \item The set of weights in the $G^\vee$-module $V\left( \lambda^\vee \right)$ is the Weyl group orbit $W \lambda^\vee$ and each weight has multiplicity one.
 \item For all $\alpha \in \Delta$ and $w \in W$, $\langle \alpha, w \lambda^\vee \rangle \in \{-1, 0, 1\}$.
\end{itemize}
\end{definition}

In the general reducible case, we will consider a coweight to be minuscule if it has a minuscule component in every irreducible factor. As announced in the introduction, we make the following hypothesis:
\begin{hypothesis}
We assume that $G$ has a minuscule cocharacter, denoted by $\Lambda^\vee \in X^\vee$ and fixed once and for all.
\end{hypothesis}

\begin{rmk}
Let $\gfrak$ be the Lie algebra of ${\bf G}$. The group is reductive if the derived algebra $[\gfrak, \gfrak]$ of $\gfrak$ is a semisimple algebra. Necessarily, our hypothesis implies that this semisimple algebra is the direct sum of simple Lie algebras of type $A_n$, $B_n$, $C_n$, $D_n$, $E_6$ or $E_7$. These are the only ones having a minuscule coweight (and weight) in the Cartan-Killing classification.

The hypothesis also imposes restrictions on the isogeny type of ${\bf G}$, as we demand that the minuscule coweight $\Lambda^\vee$ gives a cocharacter. In order that coweights of the Lie algebra lift to cocharacters of ${\bf G}$, we can assume that ${\bf G}$ is of adjoint type ($X = Q$) or equivalently that $G^\vee$ is simply connected ($X^\vee = P^\vee$). The author believes that such isogeny restrictions can be lifted. Getting rid of the minuscule hypothesis will however require a different setup.

For the sake of concreteness, it is sufficient to assume that the semisimple part of ${\bf G}$ is a direct product of simple adjoint groups, each factor from the above type (see exercise p.46 in \cite{Steinberg}). In this case, the coweight $\Lambda^\vee$ has a minuscule component on every simple factor, according to the list at the end of \cite{Bourbaki}, Chapitre VIII, \textsection 7.3. 
\end{rmk}

\subsection{Measures and probability} Equality in law, i.e equality between the probability distributions of two random variables $X$ and $Y$, is denoted by $X \eqlaw Y$. On any compact group $H$, we denote by $\Uc\left( H \right)$ a generic Haar distributed random variable on $H$. For example $U \eqlaw \Uc\left( \Oc \right)$ is a Haar distributed random variable on the ring of integers $\Oc$. 

For the non-compact groups $G$ and $N$, each Haar measure is normalized so that $K$ and $K \cap N = N\left( \Oc \right)$ have unit measure.

Also, a left-invariant random walk on a group $H$ is a Markov chain $\left( H_t ; t \in \N \right)$ such that:
$$ \forall t \in \N, \ H_t := h_1 h_2 \dots h_t \ .$$
where $\left( h_s \right)_{s \in \N^*}$ are independent and identically distributed random variables, called the increments. A function $f: H \rightarrow \C$ is $\alpha$-harmonic (w.r.t that random walk) if 
$$ \forall h \in H, \ \forall t \in \N, \ \E\left[ f\left( h H_t \right) \right] = \alpha^t f\left( h \right) \ .$$
If $\alpha=1$, one simply says that $f$ is harmonic.

\subsection{Unramified principal series}

Let $\chi: T \rightarrow \C^*$ be a multiplicative character of $T$, the Satake parameter. We assume it is unramified i.e trivial on $T\left( \Oc \right) = T \cap K$. Necessarily, it is entirely determined by a $z \in X \otimes_\Z \C$, which is unique modulo $2 \pi i \cdot X$ and such that:
\begin{align}
\label{eq:def_chi_z}
\forall \mu^\vee \in X^\vee, \ \chi\left( \varpi^{-\mu^\vee} \right) & = e^{ \langle z, \mu^\vee \rangle } \ .
\end{align}
In all the following, we loosely identify functions of $\chi$ and functions of $z \textrm{ mod } 2 \pi i \cdot X$ because of the one-to-one correspondence between them. By abuse of language, both of them will be called the Satake parameter. By setting $\chi$ to be trivial on $N$, the character $\chi$ is inflated to $B$. Now consider the representation $I\left( \chi \right)$ defined as the parabolic induction:
\begin{align*}
I\left( \chi \right) & := \textrm{Ind}_{B}^G \chi \\
                     & = \left\{ f \in \Fc\left( G \right) \ | \ \forall b \in B, \forall g \in G, f(b g) = \chi(b) \delta^\half(b) f(g) \right\} \ .
\end{align*}

Here $\delta: B \rightarrow \C^*$ is the modular character. It is trivial on $N$ and $T \cap K$ and is obtained from the determinant of the $\textrm{Ad}$ action on $\nfrak$, the Lie algebra of $N$. More precisely, for $\varpi^{\mu^\vee} \in T / T \cap K$, it is given by:
\begin{align}
\label{eq:def_modular}
\delta\left( \varpi^{\mu^\vee} \right) & = \left| \det\left( \textrm{Ad}\left( \varpi^{\mu^\vee} \right)_{|\nfrak} \right) \right|
                                         = \left| \varpi^{-\langle 2 \rho, \mu^\vee \rangle} \right|
                                         = q^{ {\langle 2 \rho, \mu^\vee \rangle} } \ .
\end{align}
Inside the representation $I\left( \chi \right)$, there is a unique vector (up to a scalar) $\Phi_\chi$ that is $K$ right-invariant. This is a simple consequence of the $G=NAK$ decomposition:
$$ \forall \left( n, a, k \right) \in N \times A \times K, 
   \Phi_\chi\left( nak \right) = \chi(a) \delta^\half(a) \Phi_\chi\left( \Id \right) \ .$$
We choose the usual normalization given by $\Phi_\chi\left( \Id \right) = 1$. Moreover, in the following, we will rather consider the vector $\Phi_{\chi^{w_0}}$ inside the representation $I\left( \chi^{w_0} \right)$. Here $\chi^{w_0}$ is the character with Satake parameter $w_0 z$:
\begin{align}
\label{eq:def_chi_w0_z}
\forall \mu^\vee \in X^\vee, \ \chi^{w_0}\left( \varpi^{-\mu^\vee} \right) & = e^{ \langle w_0 z, \mu^\vee \rangle } \ .
\end{align}
Ultimately, the reason of this choice stems from the domain of absolute convergence we want to impose on the Whittaker function.

\subsection{Jacquet's Whittaker function (the non-Archimedean case)}

Let $\psi: ( \Kc,+) \rightarrow ( \C^*, \times)$ be an additive character of $\Kc$, trivial on the ring of integers $\Oc$, but non-trivial on $\varpi^{-1} \Oc$. One obtains a character
\begin{align}
\label{eq:def_psi_N} 
\varphi_N := & \psi \circ \chi_{st}^-: N \rightarrow \C^*
\end{align}
by composing $\psi$ with the standard character of $N$:
$$ \chi_{st}^- = \sum_{\alpha \in \Delta} \chi_\alpha^- \ ,$$
where $\chi_\alpha^-\left( x_{-\beta}(t) \right) = t \mathds{1}_{\{ \alpha = \beta \}}$. Jacquet's Whittaker function associated to the Satake parameter $z \in X \otimes_\Z \C$ and the character $\varphi_N$ is by definition (\cite{Jacquet67}):
\begin{align}
\label{eq:whittaker_def}
\forall g \in G, \Wc_z\left( g \right) = \Wc_\chi\left( g \right) := & \int_N \Phi_{ \chi^{w_0} }( \bar{w}_0 n g ) \varphi_N(n)^{-1} dn
\end{align}
where $\bar{w}_0$ is any choice of representative for the longest element in the Weyl group. We will invariably use the subscript $z$ or $\chi$ depending on what is more convenient. Also, because the dependence in the character $\varphi_N$ is irrelevant, there will be no subscript to emphasize it.

The integral \ref{eq:whittaker_def} does not converge for all $z \in X \otimes_\Z \C$, but can be analytically extended. That fact is particularly obvious when looking at the SCS formula. For now, let us list the following well-known properties. These hold only within the region of convergence of the Jacquet integral \eqref{eq:whittaker_def} and not for its analytic continuation.

\begin{proposition}
\hfill
\begin{itemize}
 \item (Domain of definition) The integral in equation \eqref{eq:whittaker_def} is absolutely convergent for $\Re\left( z \right) \in \overset{\circ}{C}$.
 \item (Boundedness) $\delta^{-\half} \chi^{-1} \Wc_\chi$ is bounded.
 \item (Invariance property)
	\begin{align}
	\label{eq:torsion}
	\forall \left(n, g, k\right) \in N \times G \times K, \ \Wc_\chi\left( n g k \right) = \varphi_N\left( n \right) \Wc_\chi\left( g \right) \ ,
	\end{align}
	and hence $\Wc_\chi$ reduces to a function on $A$.
 \item (Asymptotic behavior)
       There is a function $c_G: X \otimes_\Z \C \rightarrow \C$ given explicitly by the Gindikin-Karpelevich formula (Theorem \ref{thm:GK_formula}) such that, as $\lambda^\vee \rightarrow \infty$ in the dual Weyl chamber:
       \begin{align}
       \label{eq:whittaker_asymptotics}
       \lim_{\lambda^\vee \rightarrow \infty} \left( \delta^{-\half} \chi^{-1} \Wc_\chi \right)\left( \varpi^{-\lambda^\vee} \right) & = c_G\left( z \right) \ .
       \end{align}
\end{itemize}
\label{proposition:whittaker_properties}
\end{proposition}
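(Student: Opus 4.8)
Here is how I would approach it. The plan is to reduce all four assertions to one change-of-variables identity in $N$ together with the absolute convergence of the Gindikin--Karpelevich integral; the latter is the only place where genuine structure enters.

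\emph{Invariance.} This is the purely formal point and requires no convergence input. Right $K$-invariance of $\Wc_\chi$ is inherited from that of $\Phi_{\chi^{w_0}}$. For the left $N$-action, right-translating the integration variable by $n_0$ in \eqref{eq:whittaker_def} and using that $\varphi_N$ is a character factors out the scalar $\varphi_N(n_0)$, which is \eqref{eq:torsion}. Combined with the Iwasawa decomposition \eqref{eq:iwasawa_decomposition}, this shows $\Wc_\chi$ descends to a function on $A$.

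\emph{The key identity.} Everything else rests on the following, valid for every $\mu^\vee \in X^\vee$:
\[
\Wc_\chi\!\left( \varpi^{-\mu^\vee} \right) \;=\; q^{-\langle \rho, \mu^\vee \rangle}\, e^{\langle z, \mu^\vee \rangle}\!\int_N \Phi_{\chi^{w_0}}\!\left( \bar{w}_0 m \right)\, \varphi_N\!\left( \varpi^{-\mu^\vee} m\, \varpi^{\mu^\vee} \right)^{-1} dm \, .
\]
One obtains it from \eqref{eq:whittaker_def} by the substitution $m = \varpi^{\mu^\vee} n\, \varpi^{-\mu^\vee}$: this conjugation is an automorphism of $N$ scaling Haar measure by $\delta(\varpi^{\mu^\vee})$ via \eqref{eq:def_modular}; one then moves $\varpi^{-\mu^\vee}$ past $\bar{w}_0$ in the form $\varpi^{-w_0\mu^\vee}\bar{w}_0$ and invokes the left $B$-equivariance $\Phi_{\chi^{w_0}}(bg) = \chi^{w_0}(b)\,\delta^{\half}(b)\,\Phi_{\chi^{w_0}}(g)$, the $W$-invariance of the pairing and $w_0 \rho = -\rho$, whereupon the prefactor collapses to $q^{-\langle\rho,\mu^\vee\rangle} e^{\langle z,\mu^\vee\rangle}$ (the two sign flips in $\varpi^{-w_0\mu^\vee}$ conspire to turn $\delta^{\half}$ into a positive power of $q$).

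\emph{Domain of definition and boundedness.} The substantive input is that $C := \int_N \left| \Phi_{\chi^{w_0}}(\bar{w}_0 m) \right|\, dm$ is finite for $\Re(z) \in \overset{\circ}{C}$; absolute convergence of \eqref{eq:whittaker_def} at an arbitrary $g$ then follows from the same bookkeeping (peel off the $K$-part, absorb the $N$-part into the integration variable, move the $A$-part across $\bar{w}_0$). Writing $\bar{w}_0 n$ in Iwasawa form $n_1 \varpi^{-\nu^\vee(n)} k_1$, one has $\left| \Phi_{\chi^{w_0}}(\bar{w}_0 n) \right| = q^{-\langle\rho,\nu^\vee(n)\rangle}\, e^{\langle \Re(w_0 z), \nu^\vee(n)\rangle}$, so $C$ is a sum over $\nu^\vee$ of a stratum volume times this weight; by the Gindikin--Karpelevich evaluation (Theorem \ref{thm:GK_formula}), equivalently by the classical rank-one reduction, it is finite exactly when $\Re\langle w_0 z, \alpha^\vee\rangle < 0$ for all $\alpha \in \Phi^+$, and since positive coroots are non-negative integer combinations of the simple coroots and $w_0\,\overset{\circ}{C} = -\overset{\circ}{C}$, this region is exactly $\Re(z) \in \overset{\circ}{C}$. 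Boundedness is then a one-line consequence of the key identity: $|\varphi_N| \equiv 1$ gives $\left| \Wc_\chi(\varpi^{-\mu^\vee}) \right| \le q^{-\langle\rho,\mu^\vee\rangle} e^{\langle\Re(z),\mu^\vee\rangle} C$, and since $\left| (\delta^{-\half}\chi^{-1})(\varpi^{-\mu^\vee}) \right| = q^{\langle\rho,\mu^\vee\rangle} e^{-\langle\Re(z),\mu^\vee\rangle}$, the product is bounded by $C$; by the invariance property the same bound then holds on all of $G$. This stratum estimate is the step that carries the real content, and is the main obstacle --- the rest is bookkeeping.

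\emph{Asymptotics.} Taking $\mu^\vee = \lambda^\vee$ dominant in the key identity and dividing by $(\delta^{\half}\chi)(\varpi^{-\lambda^\vee}) = q^{-\langle\rho,\lambda^\vee\rangle} e^{\langle z,\lambda^\vee\rangle}$, the prefactors cancel exactly and
\[
\left( \delta^{-\half}\chi^{-1}\Wc_\chi \right)\!\left( \varpi^{-\lambda^\vee} \right) \;=\; \int_N \Phi_{\chi^{w_0}}\!\left( \bar{w}_0 m \right)\, \varphi_N\!\left( \varpi^{-\lambda^\vee} m\, \varpi^{\lambda^\vee} \right)^{-1} dm \, .
\]
As $\lambda^\vee \to \infty$ inside the dual Weyl chamber --- meaning $\langle\alpha,\lambda^\vee\rangle \to +\infty$ for every $\alpha \in \Delta$ --- the conjugate $\varpi^{-\lambda^\vee} m\, \varpi^{\lambda^\vee}$ has its simple-root coordinates multiplied by $\varpi^{\langle\alpha,\lambda^\vee\rangle}$, whose valuations blow up; since $\psi$ is trivial on $\Oc$, the integrand converges pointwise to $\Phi_{\chi^{w_0}}(\bar{w}_0 m)$ while staying dominated by the integrable function $m \mapsto \left| \Phi_{\chi^{w_0}}(\bar{w}_0 m) \right|$. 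Dominated convergence yields the limit $c_G(z) = \int_N \Phi_{\chi^{w_0}}(\bar{w}_0 m)\, dm$, which is exactly the Gindikin--Karpelevich integral evaluated by Theorem \ref{thm:GK_formula}, establishing \eqref{eq:whittaker_asymptotics}.
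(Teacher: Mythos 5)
Your proposal is correct and follows essentially the same route as the paper: your ``key identity'' is exactly the paper's alternative expression \eqref{eq:alternate_expr} evaluated at $a = \varpi^{-\mu^\vee}$ (obtained by the same conjugation/change of variables and $B$-equivariance of $\Phi_{\chi^{w_0}}$), and domain of definition, boundedness and asymptotics are then deduced just as in the paper via unitarity of $\varphi_N$, the Gindikin--Karpelevich convergence (Theorem \ref{thm:GK_formula}), and dominated convergence.
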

\begin{rmk}
 \label{rmk:infinity_in_C}
 When considering limits $\lambda^\vee \rightarrow \infty$ in the dual Weyl chamber, we always mean that $\lambda^\vee$ goes to infinity away from the walls. Formally:
 $$ \forall \alpha \in \Delta, \ \langle \alpha, \lambda^\vee \rangle \rightarrow \infty$$
\end{rmk}

\begin{proof}
For now, let us suppose that $\chi$ is chosen so that the integral defining $\Wc_\chi$ converges absolutely. Equation \eqref{eq:torsion} is proven by performing a change of variable in the integral formula: 
\begin{align*}
\Wc_\chi(ngk) & = \int_N \Phi_{ \chi^{w_0} }( \bar{w}_0 h n g k ) \varphi_N(h)^{-1} dh\\
& = \int_N \Phi_{ \chi^{w_0} }( \bar{w}_0 h g ) \varphi_N(h n^{-1})^{-1} dh\\
& = \varphi_N(n) \Wc_\chi(g) \ .
\end{align*}
Because of the Iwasawa decomposition, we can focus on $g \in A$. In this case, we start by proving the alternative expression:
\begin{align}
\label{eq:alternate_expr}
\forall a \in A, \ \Wc_\chi(a) & = \chi\left( a \right) \delta^{\half}\left( a \right) \int_N \Phi_{ \chi^{w_0} }( \bar{w}_0 n) \varphi_N\left( a n a^{-1} \right)^{-1} d n \ .
\end{align}
This latter equation is obtained as follows:
\begin{align*}
\Wc_\chi(a) & = \int_N \Phi_{ \chi^{w_0} }( \bar{w}_0 n a ) \varphi_N(n)^{-1} dn\\
& = \Phi_{\chi^{w_0}}\left( a^{w_0} \right) \int_N \Phi_{ \chi^{w_0} }( \bar{w}_0 a^{-1} n a ) \varphi_N(n)^{-1} dn\\
& = \chi\left( a \right) \delta^{-\half}\left( a \right) \int_N \Phi_{ \chi^{w_0} }( \bar{w}_0 n) \varphi_N\left( a n a^{-1} \right)^{-1} d \left( a n a^{-1} \right)\\
& = \chi\left( a \right) \delta^{\half}\left( a \right) \int_N \Phi_{ \chi^{w_0} }( \bar{w}_0 n) \varphi_N\left( a n a^{-1} \right)^{-1} d n \ .
\end{align*}
Then, as the character $\varphi_N$ is unitary i.e valued in $S^1 = \left\{ s \in \C \ | \ |s|=1 \right\}$, we see that for all $a \in A$:
$$\left| \delta^{-\half} \chi^{-1} \Wc_\chi \right|(a)
= \left| \int_N \Phi_{ \chi^{w_0} }( \bar{w}_0 n) \varphi_N\left( a n a^{-1} \right)^{-1} d n \right|
\leq \int_N \Phi_{ |\chi^{w_0}| }( \bar{w}_0 n) d n
$$
where $|\chi|$ is the real valued character such that for all $\mu^\vee \in X^\vee$, $|\chi|\left( \varpi^{-\mu^\vee} \right) = e^{\langle \Re(z), \mu^\vee \rangle}$. Hence, thanks to Theorem \ref{thm:GK_formula}, the Whittaker function is indeed well-defined for $\Re\left( z \right) \in \overset{\circ}{C}$. Moreover, in such a case, $\delta^{-\half} \chi^{-1} \Wc_\chi$ is bounded on $A$ and a fortiori on the entire group, because of the invariance property.

Now, when considering $a = \varpi^{-\lambda^\vee}$ in equation \eqref{eq:alternate_expr}, we have:
$$ \left( \delta^{-\half} \chi^{-1} \Wc_\chi \right)\left( \varpi^{-\lambda^\vee} \right)
 = \int_N \Phi_{ \chi^{w_0} }( \bar{w}_0 n) \varphi_N\left( \varpi^{-\lambda^\vee} n \varpi^{\lambda^\vee} \right)^{-1} dn \ .$$
If $\lambda^\vee \rightarrow \infty$ while staying in the Weyl chamber $C^\vee$ and away from the walls, the matrix coefficients of $\varpi^{-\lambda^\vee} n \varpi^{\lambda^\vee}$ become $\Oc$ valued. Hence, for every $n \in N$, $\varphi_N\left( \varpi^{-\lambda^\vee} n \varpi^{\lambda^\vee} \right)^{-1}$ is stationary at $1$ and using Lebesgue's dominated convergence theorem:
$$
\left( \delta^{-\half} \chi^{-1} \Wc_\chi \right)\left( \varpi^{-\lambda^\vee} \right) \stackrel{\lambda \rightarrow \infty}{\longrightarrow} \int_N \Phi_{ \chi^{w_0} }( \bar{w}_0 n  ) dn \ .
$$
This is equation \eqref{eq:whittaker_asymptotics}.
\end{proof}

The function $c_G$ is the non-Archimedean analogue of the Harish-Chandra $c$ function and appears as:
\begin{theorem}[Langlands - \cite{Langlands71} formula (4)]
\label{thm:GK_formula}
When $\Re(z) \in \overset{\circ}{C}$, we have:
$$ \int_N \Phi_{ |\chi^{w_0}| }( \bar{w}_0 n) d n < \infty \ ,$$
and the following Gindikin-Karpelevich formula holds for the non-Archimedean places:
$$ c_{G}\left( z \right)
:= \int_N \Phi_{ \chi^{w_0} }\left( \bar{w}_0 n \right) dn
 = \prod_{\beta \in \Phi^+}\left( \frac{1 - q^{-1} e^{-\langle \beta^\vee, z \rangle} }
                                                         {1 -        e^{-\langle \beta^\vee, z \rangle} } \right) \ .$$
\end{theorem}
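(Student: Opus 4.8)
The plan is to compute $c_G(z)=\int_N\Phi_{\chi^{w_0}}(\bar w_0 n)\,dn$ by reducing it, along a reduced word of $w_0$, to a product of rank-one integrals; this is the classical Gindikin--Karpelevich mechanism, in which $\int_N\Phi_{\chi^{w_0}}(\bar w_0 n)\,dn$ is read as the value on the normalised spherical vector of the standard intertwining operator $I(\chi^{w_0})\to I(\chi)$. Fix a reduced decomposition $w_0=s_{\alpha_{i_1}}\cdots s_{\alpha_{i_\ell}}$ with $\ell=\ell(w_0)=|\Phi^+|$ and compatible Weyl representatives $\bar w_0=\bar s_{i_1}\cdots\bar s_{i_\ell}$. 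I will use the standard facts that the roots $\beta_k:=s_{\alpha_{i_1}}\cdots s_{\alpha_{i_{k-1}}}(\alpha_{i_k})$, $1\le k\le\ell$, enumerate $\Phi^+$ without repetition, that $N$ is the ordered product $\prod_{k=1}^{\ell}x_{-\beta_k}(\Kc)$, and that $(t_1,\dots,t_\ell)\mapsto\prod_k x_{-\beta_k}(t_k)$ pushes $\bigotimes_k dt_k$ forward to the Haar measure of $N$ normalised as in Section~\ref{section:preliminaries}, since $N(\Oc)=\prod_k x_{-\beta_k}(\Oc)$ has unit mass.

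First I would settle the rank-one case. For a simple root $\alpha$ and an unramified character $\eta$ of $T$ with Satake parameter $z'$, a direct matrix computation in the rank-one subgroup attached to $\alpha$ shows that $\bar s_\alpha\, x_{-\alpha}(t)$ lies in $K$ when $\val(t)\ge 0$, and that for $\val(t)=-n<0$ its Iwasawa torus part is $\varpi^{-n\alpha^\vee}$ modulo $T(\Oc)$ with $K$-part in $N(\Oc)$. Using \eqref{eq:def_modular} and $\langle\rho,\alpha^\vee\rangle=1$, the integrand $\Phi_\eta(\bar s_\alpha x_{-\alpha}(t))$ is then $1$ for $t\in\Oc$ and $(q^{-1}e^{\langle z',\alpha^\vee\rangle})^{n}$ for $\val(t)=-n$; since $\{\val(t)=-n\}=\varpi^{-n}\Oc^\times$ has mass $q^{n}(1-q^{-1})$, summing the geometric series gives, whenever $\langle\Re z',\alpha^\vee\rangle<0$,
\[
\int_{\Kc}\Phi_\eta\left(\bar s_\alpha\, x_{-\alpha}(t)\right)dt
= 1+(1-q^{-1})\sum_{n\ge 1}e^{n\langle z',\alpha^\vee\rangle}
= \frac{1-q^{-1}e^{\langle z',\alpha^\vee\rangle}}{1-e^{\langle z',\alpha^\vee\rangle}}\,,
\]
which is a single Gindikin--Karpelevich factor once the twist is accounted for: e.g.\ for $z'=w_0 z$ and $\alpha=\alpha_{i_1}$ one has $\langle w_0 z,\alpha_{i_1}^\vee\rangle=-\langle\beta_1^\vee,z\rangle$ since $\beta_1=\alpha_{i_1}$ and $w_0$ reverses signs along $-w_0$.

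For the general case I would invoke the multiplicativity of the standard intertwining operators along the reduced word: when $\ell(ww')=\ell(w)+\ell(w')$ one has $M_{ww'}=M_w\circ M_{w'}$, and each $M_w$ sends the spherical vector of the source to a scalar times the spherical vector of the target, the scalar for a simple reflection being exactly the rank-one integral above. Telescoping over $w_0=s_{\alpha_{i_1}}\cdots s_{\alpha_{i_\ell}}$ therefore expresses $c_G(z)$ as a product of $\ell$ rank-one factors, the $k$-th evaluated at the twisted parameter whose pairing with $\alpha_{i_k}^\vee$ equals $-\langle\beta_k^\vee,z\rangle$; since the $\beta_k$ run over $\Phi^+$, this is the product claimed. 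The identity $M_{ww'}=M_w\circ M_{w'}$ itself amounts to the measure-preserving splitting of $N$ into (the shorter-word unipotent)$\cdot$(one root subgroup) together with a change of variables by $\bar s_{i_k}$, after which the Iwasawa decomposition \eqref{eq:iwasawa_decomposition} integrates out the separated variable. Running the same computation with $|\chi^{w_0}|$ in place of $\chi^{w_0}$ gives the finiteness assertion and shows that every intermediate geometric series — hence each application of Fubini — converges precisely when $\Re(z)\in\overset{\circ}{C}$, because that condition forces $\langle\Re(z),\beta^\vee\rangle>0$ for every positive coroot and hence makes each intermediate twisted parameter strictly anti-dominant in the relevant simple coroot direction.

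The main obstacle is exactly this factorisation. One must verify that the product parametrisation of $N$ along the reduced word is measure-preserving with the chosen normalisation, that conjugation by the $\bar s_i$ combined with the Iwasawa decomposition cleanly separates one root-subgroup variable and correctly identifies the twisted character produced at that step, and that absolute convergence legitimises Fubini throughout; together with the combinatorial identity $\{\beta_k\}_k=\Phi^+$ and the sign bookkeeping of the successive Weyl twists (which is precisely what makes the choices of $\bar w_0$ and of $\chi^{w_0}$, with parameter $w_0 z$ rather than $z$, the right normalisations), this is where all the care goes — the rank-one calculation itself being elementary. By contrast, the reductive (rather than merely semisimple) nature of $G$ and its isogeny type play no role here, since the maximal compact torus $T(\Oc)$ is invisible to unramified characters and the whole computation takes place on the root subgroups.
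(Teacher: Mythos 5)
The paper does not actually prove this statement: it is imported wholesale as a known result, attributed to Langlands (\cite{Langlands71}, formula (4)), and is only \emph{used} (in the proof of Proposition \ref{proposition:whittaker_properties}) to get convergence, boundedness and the constant $c_G(z)$. So there is no internal proof to compare against; what you have written is essentially the classical Gindikin--Karpelevich argument that Langlands' reference itself contains: the rank-one Iwasawa computation giving the single factor $\frac{1-q^{-1}x}{1-x}$, the coordinates $N=\prod_k x_{-\beta_k}(\Kc)$ along a reduced word with the normalisation $N(\Oc)$ of mass one, and the cocycle property $M_{ww'}=M_w\circ M_{w'}$ for length-additive products, which telescopes the full integral into $|\Phi^+|$ rank-one factors and simultaneously settles absolute convergence for $\Re(z)\in\overset{\circ}{C}$. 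Your rank-one computation is correct in the paper's conventions (lower-triangular $N$, $\delta(\varpi^{\mu^\vee})=q^{\langle 2\rho,\mu^\vee\rangle}$, measure of $\varpi^{-n}\Oc^\times$ equal to $q^n(1-q^{-1})$), and the two genuinely non-elementary inputs (the measure-preserving root-coordinate parametrisation of $N$ and the multiplicativity of the intertwining operators) are correctly identified as the load-bearing steps, even if only sketched. What this buys over the paper's treatment is self-containedness; what the paper buys by citing Langlands is brevity, since the theorem plays only an auxiliary role there.

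One small caution on your bookkeeping, which you yourself flag as the delicate point: the parenthetical claim that the first factor satisfies $\langle w_0 z,\alpha_{i_1}^\vee\rangle=-\langle \beta_1^\vee,z\rangle$ with $\beta_1=\alpha_{i_1}$ is not literally correct in general, since $\langle w_0 z,\alpha_{i_1}^\vee\rangle=\langle z,w_0\alpha_{i_1}^\vee\rangle=-\langle z,\theta(\alpha_{i_1})^\vee\rangle$ where $\theta=-w_0$ is the diagram involution, so that step is attached to the simple root $\theta(\alpha_{i_1})$ rather than to $\alpha_{i_1}$ unless $\theta$ fixes it. This does not threaten the final formula --- the exponents produced by the telescoping are still indexed by an enumeration of $\Phi^+$ --- but a complete write-up must track which enumeration actually arises from the chosen recursion (peeling reflections from the left of $\bar w_0$ versus the right) rather than asserting the naive matching; this is exactly the ``sign bookkeeping of the successive Weyl twists'' you mention, and it is where the otherwise routine induction needs to be written carefully.
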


\subsection{Random walks} For the purpose of constructing probability measures, in all the following, we will assume that the Satake parameter $z$ is real and in the interior of the Weyl chamber $\overset{\circ}{C}$. Let $\left( \Omega, \Ac, \P \right)$ be our working probability space. We consider a random walk $W^{(z)} = \left( W_t^{(z)}; t \in \N \right)$ on $X^\vee$, starting at zero, with independent and identically distributed increments. We choose the following distribution on the Weyl group orbit of $\Lambda^\vee$:
\begin{align}
\label{eq:rw_lattice}
\forall t \in \N, \forall \mu^\vee \in W \Lambda^\vee, \ \P\left( W_{t+1}^{(z)} - W_t^{(z)} = \mu^\vee \right) & = \frac{ e^{ \langle z, \mu^\vee \rangle } }{ \ch V\left( \Lambda^\vee \right)(z) } \ .
\end{align}
Because $\Lambda^\vee$ is minuscule, the weights $W \Lambda^\vee$ are exactly the weights appearing in the $G^\vee$-module  $V\left( \Lambda^\vee \right)$ and hence:
$$ \ch V\left( \Lambda^\vee \right) = \sum_{\mu^\vee \in W \Lambda^\vee} e^{ \mu^\vee } \ .$$
Therefore, the measure defined in \eqref{eq:rw_lattice} sums indeed to $1$ and it is a probability distribution of representation-theoretic significance. Notice that the random walk $W^{(z)}$ depends on the choice of minuscule coweight $\Lambda^\vee$. Also, the Satake parameter determines the drift, and is indicated in superscript notation. An important feature is that, almost surely, the random walk $W^{(z)}$ goes to infinity inside $C^\vee$. This is obtained thanks to the law of large numbers and the following lemma:
\begin{lemma}
For $z \in \overset{\circ}{C}$, the random walk's mean drift $\E\left( W_1^{(z)} - W_0^{(z)} \right)$ is in the interior of the dual Weyl chamber $C^\vee$.
\end{lemma}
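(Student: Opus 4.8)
The plan is to compute the mean drift explicitly and pair it against each simple root. Since the walk starts at $0$ and has the increment law \eqref{eq:rw_lattice}, the mean drift is
\[
 m(z) := \E\left( W_1^{(z)} - W_0^{(z)} \right) = \frac{1}{\ch V\left( \Lambda^\vee \right)(z)} \sum_{\mu^\vee \in W \Lambda^\vee} \mu^\vee \, e^{ \langle z, \mu^\vee \rangle } \ ,
\]
where $\ch V(\Lambda^\vee)(z) = \sum_{\mu^\vee \in W\Lambda^\vee} e^{\langle z, \mu^\vee\rangle} > 0$. By the definition of $\overset{\circ}{C^\vee}$, proving $m(z) \in \overset{\circ}{C^\vee}$ is the same as proving
\[
 S_\alpha(z) := \sum_{\mu^\vee \in W \Lambda^\vee} \langle \alpha, \mu^\vee \rangle \, e^{ \langle z, \mu^\vee \rangle } \ > \ 0 \qquad \text{for every } \alpha \in \Delta \ .
\]

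The main step will be to regroup this sum along the orbits of the simple reflection $s_\alpha$ acting on $W\Lambda^\vee$. Because $\Lambda^\vee$ is minuscule, $\langle \alpha, \mu^\vee \rangle \in \{-1, 0, 1\}$ for all $\mu^\vee \in W\Lambda^\vee$, so each $s_\alpha$-orbit is either a fixed point $\{\mu^\vee\}$ with $\langle \alpha, \mu^\vee\rangle = 0$, contributing nothing to $S_\alpha(z)$, or a two-element orbit $\{\nu^\vee, s_\alpha \nu^\vee\}$ where, say, $\langle \alpha, \nu^\vee\rangle = 1$, hence $s_\alpha\nu^\vee = \nu^\vee - \alpha^\vee$ and $\langle \alpha, s_\alpha\nu^\vee\rangle = -1$. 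The contribution of such a pair is
\[
 e^{\langle z, \nu^\vee\rangle} - e^{\langle z, \nu^\vee - \alpha^\vee\rangle} = e^{\langle z, \nu^\vee\rangle}\left( 1 - e^{-\langle z, \alpha^\vee\rangle} \right) \ > \ 0 \ ,
\]
the strict inequality coming exactly from the hypothesis $z \in \overset{\circ}{C}$, which forces $\langle z, \alpha^\vee\rangle > 0$. Summing the (nonnegative) orbit contributions yields $S_\alpha(z) \geq 0$, with equality only if $\langle \alpha, \cdot\rangle$ vanishes identically on $W\Lambda^\vee$.

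It remains to exclude that last possibility. If $\langle \alpha, w\Lambda^\vee\rangle = 0$ for all $w \in W$, then $\Lambda^\vee$ is orthogonal to the orbit $W\alpha$; but $W\alpha$ spans the root space of the irreducible factor containing $\alpha$ (a standard consequence of the irreducibility of the $W$-representation on that factor), so the component of $\Lambda^\vee$ in that factor would vanish, contradicting the standing hypothesis that $\Lambda^\vee$ has a nonzero minuscule component in every irreducible factor. Hence $S_\alpha(z) > 0$ for each $\alpha \in \Delta$, i.e.\ $m(z) \in \overset{\circ}{C^\vee}$. I do not expect any genuine obstacle here: the only mildly delicate point is this spanning argument (and the bookkeeping in the reducible case, where one notes that a possible central part of $\Lambda^\vee$ is $W$-fixed, contributes equally to every term, and is invisible to all the pairings $\langle \alpha, \cdot\rangle$). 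Alternatively, one could observe that $m(z) = \nabla_z \log \ch V(\Lambda^\vee)(z)$ has Jacobian equal to the positive semidefinite covariance matrix of the increment and deduce the claim from a monotonicity argument, but the orbit computation above is the most direct route.
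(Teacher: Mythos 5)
Your proof is correct and follows essentially the same route as the paper: pair the mean drift against each simple root $\alpha$ and symmetrize over $s_\alpha$ (your orbit-by-orbit grouping is the paper's ``doubling the sum over $\mu^\vee$ and $s_\alpha\mu^\vee$''), with positivity coming from $\langle z, \alpha^\vee\rangle > 0$ and strictness from the minuscule hypothesis. Your closing argument that $\langle \alpha, \cdot\rangle$ cannot vanish on all of $W\Lambda^\vee$ merely makes explicit a step the paper compresses into ``at least one term is positive because of the minuscule hypothesis.''
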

\begin{proof}
For each simple root $\alpha \in \Delta$, we have that
 $ \langle \alpha, \E\left( W_1^{(z)} - W_0^{(z)} \right) \rangle
 = \frac{ \sum_{\mu^\vee \in W \Lambda^\vee} \langle \alpha, \mu^\vee \rangle e^{ \langle z, \mu^\vee \rangle } }
        { \ch V\left( \Lambda^\vee \right)(z) } \ ,$
which has the same sign as its numerator. By doubling the sum and summing over both $\mu^\vee$ and $s_\alpha \mu^\vee$, the latter numerator has the same sign as:
$$ \sum_{\mu^\vee \in W \Lambda^\vee} \langle \alpha, \mu^\vee \rangle e^{ \langle z, \mu^\vee \rangle }
 + \sum_{\mu^\vee \in W \Lambda^\vee} \langle \alpha, s_\alpha \mu^\vee \rangle e^{ \langle z, s_\alpha \mu^\vee \rangle }
 = \sum_{\mu^\vee \in W \Lambda^\vee} \langle \alpha, \mu^\vee \rangle e^{ \langle z, \mu^\vee \rangle } 
                                      \left( 1 - e^{-\langle z, \alpha^\vee \rangle \langle \alpha, \mu^\vee \rangle} \right)$$
Because $\langle z, \alpha^\vee \rangle > 0$, the function $t \mapsto t \left( 1 - e^{-\langle z, \alpha^\vee \rangle t } \right)$ maps $\R$ to $\R_+$ and one recognizes a sum of non-negative terms. At least one term is positive because of the minuscule hypothesis, and hence $\langle \alpha, \E\left( W_1^{(z)} - W_0^{(z)} \right) \rangle > 0$, concluding the proof.
\end{proof}

Now, we define $\left( B_t\left( W^{(z)} \right) ; t \in \N \right)$ as a left invariant random walk on $B\left( \Kc \right)$ with independent increments. More precisely:
\begin{align}
\label{eq:B_def_1}
B_{0}\left( W^{(z)} \right) & = b_0 \ ,\\
\label{eq:B_def_2}
\forall t \in \N, \ 
B_{t+1}\left( W^{(z)} \right) & = B_{t}\left( W^{(z)} \right) b_{t+1}' \varpi^{-\left( W^{(z)}_{t+1} - W^{(z)}_{t} \right) } b_{t+1} \ ,
\end{align}
where $\left( b_t; t \in \N \right)$ and $\left( b_t'; t \in \N \right)$ are all independent and distributed as $\Uc\left( B\left( \Oc \right) \right)$. Notice that the distribution of $B_t\left( W^{(z)} \right)$ is bi-invariant under $B\left( \Oc \right)$.

Modulo $T\left( \Oc \right) = T \cap K$ on the right, we have the $NA$ decomposition:
\begin{align}
\label{eq:NA_decomposition}
B_{t}\left( W^{(z)} \right) & = N_{t}\left( W^{(z)} \right) A_{t}\left( W^{(z)} \right) \mod T\left( \Oc \right) \ ,
\end{align}
with, for all $t \in \N$,
\begin{align}
\label{eq:def_A_t}
A_{t}\left( W^{(z)} \right) & = \varpi^{ -W^{(z)}_t } \ ,
\end{align}
and
\begin{align}
\label{eq:def_N_t_1}
N_{0}\left( W^{(z)} \right) & = n_0 \ ,\\
\label{eq:def_N_t_2}
N_{t+1}\left( W^{(z)} \right) & = N_{t}\left( W^{(z)} \right) \varpi^{-W^{(z)}_{t}  } n_{t+1}' \varpi^{W^{(z)}_{t}}
                                                              \varpi^{-W^{(z)}_{t+1}} n_{t+1}  \varpi^{W^{(z)}_{t+1}} \ .
\end{align}
Here, $\left(n_t ; t \in \N \right)$ and $\left(n_t' ; t \in \N \right)$ are all independent and distributed as $\Uc\left( N\left( \Oc \right) \right)$. These are obtained as the $N$-parts of $\left( b_t; t \in \N \right)$ and $\left( b_t'; t \in \N \right)$.

\begin{proposition}
\label{proposition:N_infty_CV}
Almost surely, the process $$\left( N_{t}\left( W^{(z)} \right) ; t \in \N \right)$$ converges to a limiting random variable $N_\infty\left( W^{(z)} \right)$.
\end{proposition}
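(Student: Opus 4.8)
The plan is to view $N_{t}\big(W^{(z)}\big) = n_0\, g_1 g_2\cdots g_{2t}$ as the sequence of partial products of an infinite product in $N$, where, reading off \eqref{eq:def_N_t_2}, the increments alternate between $g_{2k-1} = \varpi^{-W^{(z)}_{k-1}}\, n'_k\, \varpi^{W^{(z)}_{k-1}}$ and $g_{2k} = \varpi^{-W^{(z)}_{k}}\, n_k\, \varpi^{W^{(z)}_{k}}$, and to show that this infinite product converges almost surely. The two ingredients I would combine are: (i) the group $N$ is locally compact and totally disconnected, with the congruence subgroups $N_m := \Ker\big( N(\Oc)\to N(\Oc/\varpi^m\Oc)\big)$ forming a decreasing neighbourhood basis of $\Id$ consisting of compact open subgroups, with $N(\Oc)$ itself compact and open in $N$; and (ii) the fact, recalled just above, that almost surely $W^{(z)}_t\to\infty$ inside $C^\vee$, which by the strong law of large numbers together with the lemma on the mean drift upgrades to the quantitative statement that almost surely $\langle\beta, W^{(z)}_t\rangle\to\infty$ for every $\beta\in\Phi^+$, since $\langle\beta,\cdot\rangle$ is positive on the interior of $C^\vee$ (write $\beta$ as a non-negative combination of simple roots).

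First I would show that the increments $g_j$ tend to $\Id$ almost surely. Since $N$ is generated by the root subgroups $\big(x_{-\beta}\big)_{\beta\in\Phi^+}$, fixing an ordering of $\Phi^+$ makes $(u_\beta)\mapsto\prod_\beta x_{-\beta}(u_\beta)$ a bijection $\Oc^{|\Phi^+|}\to N(\Oc)$, and the torus acts on root subgroups by $\varpi^{-\mu^\vee} x_{-\beta}(u)\,\varpi^{\mu^\vee}=x_{-\beta}\big(\varpi^{\langle\beta,\mu^\vee\rangle}u\big)$. Conjugating $n=\prod_\beta x_{-\beta}(u_\beta)\in N(\Oc)$ termwise therefore gives $\varpi^{-\mu^\vee} n\,\varpi^{\mu^\vee}=\prod_\beta x_{-\beta}\big(\varpi^{\langle\beta,\mu^\vee\rangle}u_\beta\big)$, which lies in $N_m$ as soon as $\langle\beta,\mu^\vee\rangle\geq m$ for all $\beta\in\Phi^+$. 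Applying this with $\mu^\vee=W^{(z)}_{k-1}$ and $\mu^\vee=W^{(z)}_{k}$ and using (ii) shows that, almost surely, $g_j\in N_{m_j}$ with $m_j\to\infty$; in particular $g_j\to\Id$, and there is an almost surely finite random index $J$ such that $g_j\in N(\Oc)$ for all $j>J$.

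Then I would conclude by a soft compactness-and-completeness argument. Set $P:=n_0\, g_1\cdots g_J$; for every $t$ with $2t\geq J$ one has $N_t\big(W^{(z)}\big)=P\cdot\big(g_{J+1}\cdots g_{2t}\big)\in P\cdot N(\Oc)$, so the sequence eventually lies in the fixed compact set $P\cdot N(\Oc)$. Moreover, for each $m$ there is an almost surely finite $J_m\geq J$ with $g_j\in N_m$ for $j>J_m$, and then for $J_m\leq 2t'\leq 2t$ we get $N_{t'}(W^{(z)})^{-1}N_t(W^{(z)})=g_{2t'+1}\cdots g_{2t}\in N_m$ because $N_m$ is a subgroup; since $\{N_m\}_m$ is a neighbourhood basis of $\Id$, this says precisely that $\big(N_t(W^{(z)})\big)_t$ is Cauchy. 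A Cauchy sequence that eventually stays in a compact set converges, and this limit is the desired random variable $N_\infty\big(W^{(z)}\big)$.

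The only genuinely delicate point is the quantitative claim that the increments enter congruence subgroups of unbounded depth: a priori one worries that conjugation by $\varpi^{-W^{(z)}_t}$ interacts badly with the non-abelian product structure of $N$. This is precisely what is handled by the observation that conjugation distributes over a product of root-subgroup elements, so that no commutator corrections intervene, combined with the law of large numbers forcing every exponent $\langle\beta,W^{(z)}_t\rangle$ to grow. Everything else is routine topology of totally disconnected groups.
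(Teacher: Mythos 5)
Your proof is correct, but it takes a genuinely different route from the paper. The paper isolates a deterministic convergence criterion (Theorem \ref{thm:CV}): any sequence in $N$ whose multiplicative increments tend to $\Id$ converges, proved by backward induction along the height filtration $N^i$, using Steinberg's presentation, the abelian quotients $N^{i+1}\backslash N^i$ and the homomorphisms $\varphi^i$, with the ultrametric series argument applied on each graded piece; Proposition \ref{proposition:N_infty_CV} then follows because the increments in \eqref{eq:def_N_t_2} are conjugates $\varpi^{-\mu^\vee} n \varpi^{\mu^\vee}$ with $n \in N\left( \Oc \right)$ and $\mu^\vee = W^{(z)}_t$ deep in the chamber, so they tend to $\Id$ uniformly on the compact set $N\left( \Oc \right)$. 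You instead filter by the congruence subgroups $N_m$ and exploit that they are \emph{subgroups}: once the increments lie in $N_m$, all tail products lie in $N_m$, so the partial products are Cauchy with no commutator bookkeeping, and the non-abelian structure of $N$ never intervenes; the quantitative input, that the conjugated increments lie in $N_m$ with $m \rightarrow \infty$, is exactly the paper's uniform convergence statement, which you verify through the same Steinberg coordinates $n = \prod_\beta x_{-\beta}(u_\beta)$ on $N\left( \Oc \right)$ (the paper uses these coordinates elsewhere, in Corollary \ref{corollary:support}). Your argument is shorter and, in fact, since $\left( N_m \right)_m$ is a neighbourhood basis of $\Id$, the same soft reasoning would reprove the paper's deterministic Theorem \ref{thm:CV} in full generality; what it buys is the elimination of the height-filtration induction, at the price of invoking the standard fact that the congruence kernels $N_m$ are compact open subgroups forming a basis at the identity (itself checkable from the same coordinates). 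Both routes rest on the same probabilistic input, namely that almost surely $\langle \beta, W^{(z)}_t \rangle \rightarrow \infty$ for every $\beta \in \Phi^+$ by the law of large numbers and the drift lemma.
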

\begin{proof}
The formal proof is given in section \ref{section:N_infty_CV}. The result is a consequence of the deterministic Theorem \ref{thm:CV}, which holds as soon as the increments tend to $\Id$. The hypothesis is satisfied for $\left( N_{t}\left( W^{(z)} \right) ; t \in \N \right)$ because of equation \eqref{eq:def_N_t_2} and the fact that almost surely:
$$ \forall n \in N, \ \varpi^{-W^{(z)}_t} n \varpi^{ W^{(z)}_t} \stackrel{t \rightarrow \infty}{\longrightarrow} \Id,$$
as $W^{(z)}_t \rightarrow \infty$ in the dual Weyl chamber. The convergence is uniform in $n$ on compact sets and $N\left( \Oc \right)$ is compact.
\end{proof}

The reader might find example \ref{example:PGL2} more enlightening.
\begin{example}[$G = PGL_2\left( \Kc \right)$]
\label{example:PGL2}
The only adjoint group of rank $1$ is $PGL_2\left( \Kc \right)$. It is obtained as the quotient of $GL_2\left( \Kc \right)$ by the central $\Kc^* \Id$. The Borel subgroup $B$ is made of lower triangular matrices modulo $\Kc^* \Id$ and we choose as representatives matrices of the form:
$$ B = \left\{ \begin{pmatrix} a & 0\\ b & 1 \end{pmatrix} , a \in \Kc^*, b \in \Kc \right\} \ .$$
For the character and cocharacter lattices:
\begin{align*}
X      = \Z \alpha_1,      & \quad \quad \alpha_1: \begin{pmatrix} k_1 & 0\\ 0 & k_2 \end{pmatrix} \mapsto k_1/k_2 \ ,\\
X^\vee = \Z \omega_1^\vee, & \quad \quad \omega_1^\vee: k \mapsto  \begin{pmatrix} k & 0\\ 0 & 1 \end{pmatrix} \ .
\end{align*}
We have $X \otimes_\Z \C \approx \C$ and the Weyl chamber is the positive real line $\R_+$ in this identification. As such, the parameter $z$ is a positive real number. The random walk $\left( W_t^{(z)} ; t \in \N \right)$ is the simple random walk on $X^\vee \approx \Z$ with:
$$ \P\left( W_{t+1}^{(z)} - W_t^{(z)} = 1 \right) = 1 - \P\left( W_{t+1}^{(z)} - W_t^{(z)} = -1 \right) = \frac{ e^{z} }{ e^{z} + e^{-z} } > \half,$$
and therefore $W^{(z)}$ drifts to $\infty$. Moreover:
$$ A_{t}\left( W^{(z)} \right) = \begin{pmatrix} \varpi^{-W_t^{(z)}} & 0\\ 0 & 1 \end{pmatrix} \ ,$$
and there are independent Haar distributed random variables $\left(c_t ; t \in \N \right)$ such that:
$$ N_{t}\left( W^{(z)} \right) = \begin{pmatrix} 1 & 0\\ \sum_{s=0}^t c_s \varpi^{W_s^{(z)}} & 1 \end{pmatrix} \ .$$
Because $W^{(z)}_s \stackrel{s \rightarrow \infty}{\longrightarrow} \infty$, the general term $c_s \varpi^{W_s^{(z)}}$ goes to zero as $s \rightarrow \infty$. Now, invoking the standard fact that a series in a complete ultrametric space converges as soon as its general term goes to zero (Lemma 1.3 in chapter 4 \cite{Cassels}), we have that $\sum c_s \varpi^{W_s^{(z)}}$ converges in the $\varpi$-adic topology. Therefore, the sequence $\left( N_{t}\left( W^{(z)} \right); t = 0,1,2, \dots \right)$ converges almost surely to
$$ N_{\infty}\left( W^{(z)} \right) = \begin{pmatrix} 1 & 0\\ \sum_{s=0}^\infty c_s \varpi^{W_s^{(z)}} & 1 \end{pmatrix} \ .$$
\end{example}

\section{Main results}
\label{section:main_results}

For $z \in C$, let $b(z)$ be the inverse of the Weyl denominator for the group $G^\vee\left( \C \right)$ (p. 318, eq. (5.71) in \cite{knapp02}), evaluated at $z$:
$$ b(z) := \frac{1}{ e^{\langle z, \rho^\vee \rangle} \prod_{\beta \in \left( \Phi^\vee \right)^+} \left( 1 - e^{-\langle z, \beta^\vee \rangle} \right) },$$
and recall that $\chi := \chi_z: T \rightarrow \C$ is the unramified multiplicative character with Satake parameter $z$ (eq. \eqref{eq:def_chi_z}).

\begin{theorem}
\label{thm:main_result1}
There exists one and only one locally constant function $\psi_\chi: B \rightarrow \C$ such that:
\begin{itemize}
 \item (Harmonicity) $\chi^{-1} \psi_\chi: B \rightarrow \C$ is harmonic for the random walk $\left( B_t\left( W^{(z)} \right) ; t \in \N \right)$ i.e:
\begin{align}
\label{eq:harmonicity_property} 
\forall t \in \N, \ \forall b \in B, \ 
\chi\left( b \right)^{-1} \psi_\chi\left( b \right) & = \E\left[ \left( \chi^{-1} \psi_\chi \right)\left( b B_t\left( W^{(z)} \right) \right) \right] \ .
\end{align}
 \item (Boundedness) $\chi^{-1} \psi_\chi$ is bounded.
 \item (Behavior at infinity) If $\lambda^\vee \rightarrow \infty$ in the interior of the dual Weyl chamber, then:
       \begin{align}
       \label{eq:behavior_at_infty}
       \left( \chi^{-1} \psi_\chi \right)\left( \varpi^{-\lambda^\vee} \right) & \rightarrow b(z) e^{\langle z, \rho^\vee \rangle} \ .
       \end{align}
 \item (Invariance property) For all $\left( n, b, k \right) \in N \times B \times B\left( \Oc \right)$, we have:
       $$ \psi_\chi\left( n b k \right) = \varphi_N(n) \psi_\chi(b) \ .$$
\end{itemize}
Moreover, $\psi_\chi$ is given by a $G^\vee$-character:
\begin{align}
\label{eq:normalized_CS_formula}
\psi_\chi\left( \varpi^{-\lambda^\vee} \right) = 
       \left\{ \begin{array}{cc}
                   \ch V\left( \lambda^\vee \right)(z) & \textrm{ if } \lambda^\vee \textrm{ dominant, } \\
                   0                                   & \textrm{ otherwise. }
               \end{array} \right.
\end{align}
\end{theorem}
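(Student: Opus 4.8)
The plan is to follow the three‑step scheme announced in the introduction: produce a Poisson kernel formula for any candidate $\psi_\chi$, ``tropicalize'' it into a survival probability for the walk $W^{(z)}$, and evaluate that probability by a reflection principle. Uniqueness and the explicit character formula come out together, and existence is then obtained by checking the explicit formula back.

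\textbf{Step 1 (reduction and Poisson kernel formula, Proposition \ref{proposition:poisson_formula}).} By the invariance property, $\psi_\chi$ is determined by $\lambda^\vee \mapsto \psi_\chi(\varpi^{-\lambda^\vee})$ on $X^\vee$. Writing $b \in B$ as $n_b\, \varpi^{-\mu_b^\vee}$ modulo $T(\Oc)$ and computing the $NA$-decomposition of $b\,B_t(W^{(z)})$ from \eqref{eq:NA_decomposition}--\eqref{eq:def_N_t_2} (its $A$-part is $\varpi^{-(\mu_b^\vee + W_t^{(z)})}$, its $N$-part is $n_b\, \varpi^{-\mu_b^\vee} N_t(W^{(z)})\, \varpi^{\mu_b^\vee}$, up to a harmless $T(\Oc)$-conjugation that is absorbed into the law of $N_t(W^{(z)})$), the harmonicity identity \eqref{eq:harmonicity_property} becomes
\begin{equation*}
(\chi^{-1}\psi_\chi)(b) = \E\left[ \varphi_N\left( n_b\, \varpi^{-\mu_b^\vee} N_t(W^{(z)})\, \varpi^{\mu_b^\vee} \right) (\chi^{-1}\psi_\chi)\left( \varpi^{-(\mu_b^\vee + W_t^{(z)})} \right) \right] .
\end{equation*}
I would then let $t \to \infty$: since $\mu_b^\vee + W_t^{(z)} \to \infty$ in the interior of $C^\vee$ almost surely (law of large numbers together with the positivity of the mean drift), the behavior-at-infinity hypothesis sends the last factor to $b(z)\, e^{\langle z, \rho^\vee\rangle}$; by Proposition \ref{proposition:N_infty_CV} and continuity of $\varphi_N$, the first factor converges to $\varphi_N(n_b\, \varpi^{-\mu_b^\vee} N_\infty(W^{(z)})\, \varpi^{\mu_b^\vee})$; and, since $\chi^{-1}\psi_\chi$ is bounded and $|\varphi_N|=1$, dominated convergence gives
\begin{equation*}
\psi_\chi(b) = b(z)\, e^{\langle z, \rho^\vee\rangle}\, \chi(b)\, \E\left[ \varphi_N\left( n_b\, \varpi^{-\mu_b^\vee} N_\infty(W^{(z)})\, \varpi^{\mu_b^\vee} \right) \right] .
\end{equation*}
The right-hand side depends only on the law of $N_\infty(W^{(z)})$, so uniqueness is immediate; and since $N_\infty(W^{(z)})$ equals a Haar-distributed $N(\Oc)$-factor times an independent remainder, the function so defined has the invariance property, so that existence reduces to checking harmonicity (Markov property of $B_t(W^{(z)})$ and the tower rule), boundedness ($|\varphi_N|=1$) and the limit \eqref{eq:behavior_at_infty} (using $\varpi^{-\lambda^\vee} N_\infty(W^{(z)})\, \varpi^{\lambda^\vee} \to \Id$ as $\lambda^\vee \to \infty$ in the interior of $C^\vee$).

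\textbf{Step 2 (tropicalization, Proposition \ref{proposition:local2survival}).} Specialize to $b = \varpi^{-\lambda^\vee}$. Iterating \eqref{eq:def_N_t_2} and using that $\chi_{st}^- = \sum_{\alpha\in\Delta}\chi_\alpha^-$ is additive on $N$ (it factors through $N/[N,N]$) while conjugation by $\varpi^{-\nu^\vee}$ scales the $\alpha$-coordinate by $\varpi^{\langle\alpha,\nu^\vee\rangle}$, one writes $\chi_{st}^-(\varpi^{-\lambda^\vee} N_\infty(W^{(z)})\, \varpi^{\lambda^\vee})$ as a sum $\sum_{s}\sum_{\alpha} \varpi^{\langle\alpha,\, \lambda^\vee + W_s^{(z)}\rangle}\, u_{s,\alpha}$ with the $u_{s,\alpha}$ independent Haar variables on $\Oc$, independent of $W^{(z)}$. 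Conditioning on the trajectory of $W^{(z)}$ and applying $\int_{\Oc}\psi(\varpi^{c} u)\,du = \mathds{1}_{\{c\ge 0\}}$ (orthogonality of the additive character $\psi$, trivial on $\Oc$, nontrivial on $\varpi^{-1}\Oc$) collapses every Haar integration to an indicator, and the product of all of them is exactly the event that $\lambda^\vee + W_t^{(z)}$ stays in $C^\vee$. Hence
\begin{equation*}
\psi_\chi(\varpi^{-\lambda^\vee}) = b(z)\, e^{\langle z, \rho^\vee\rangle}\, e^{\langle z, \lambda^\vee\rangle}\; \P\left( \lambda^\vee + W_t^{(z)} \in C^\vee \ \text{ for all } t \ge 0 \right) ,
\end{equation*}
which already vanishes unless $\lambda^\vee = \lambda^\vee + W_0^{(z)}$ is dominant.

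\textbf{Step 3 (reflection principle, Proposition \ref{proposition:probabilistic_char}) and conclusion.} For dominant $\lambda^\vee$, the survival probability is evaluated by the reflection principle for minuscule walks of Section \ref{section:reflection_principle}: reflecting the path across the $\rho^\vee$-shifted walls of $C^\vee$ with signs $(-1)^{\ell(w)}$, and tracking the exponential bias of the increments, gives
\begin{equation*}
\P\left( \lambda^\vee + W_t^{(z)} \in C^\vee \ \forall t \ge 0 \right) = \sum_{w \in W}(-1)^{\ell(w)}\, e^{\langle z,\, w(\lambda^\vee + \rho^\vee) - (\lambda^\vee + \rho^\vee)\rangle} = b(z)^{-1}\, e^{-\langle z, \lambda^\vee + \rho^\vee\rangle}\, \ch V(\lambda^\vee)(z) ,
\end{equation*}
the last equality being the Weyl character formula together with the identification of $b(z)^{-1}$ with the Weyl denominator of $G^\vee$ evaluated at $z$. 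Substituting into the display of Step 2, all exponential factors cancel and one gets $\psi_\chi(\varpi^{-\lambda^\vee}) = \ch V(\lambda^\vee)(z)$ for dominant $\lambda^\vee$ and $0$ otherwise, which is \eqref{eq:normalized_CS_formula}; in particular $\psi_\chi$ is a $G^\vee$-character.

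\textbf{Expected main obstacle.} The conceptual heart is the reflection principle of Step 3, but it is packaged separately in Section \ref{section:reflection_principle}; within the present argument the delicate point is Step 2 --- showing that the $\varphi_N$-average against the law of $N_\infty(W^{(z)})$ is literally the probability that the minuscule walk $W^{(z)}$ stays in the dual Weyl chamber after the translation by $\lambda^\vee$. This requires pinning down which $\varpi$-adic valuations inside the recursively built unipotent $N_\infty(W^{(z)})$ are the ones seen by the simple-root coordinates $\chi_\alpha^-$, checking that the accompanying coefficients really are independent Haar variables on $\Oc$ once one conditions on the walk, and organizing the resulting product of indicators; the minuscule hypothesis --- entering through the convergence of $N_t(W^{(z)})$ and the drift of $W^{(z)}$ into $C^\vee$ --- is what keeps this tractable. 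A secondary, routine task is the verification, for existence, that the Poisson-kernel function is genuinely locally constant, uniformly enough to commute with $\E$.
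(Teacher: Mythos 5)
Your proposal is correct and follows essentially the same three-step route as the paper's own proof: the Poisson kernel formula (Proposition \ref{proposition:poisson_formula}), the tropicalization of the $\varphi_N$-average into the survival probability $\P\left( \lambda^\vee + W^{(z)} \textrm{ remains in } C^\vee \right)$ via Haar averaging over $\Oc$ (Proposition \ref{proposition:local2survival}), and the reflection principle combined with the Weyl character formula (Proposition \ref{proposition:probabilistic_char}). The only deviations are minor and sound: you get uniqueness directly from the Poisson representation where the paper uses a separate bounded-martingale argument, and you collapse each Haar term to an indicator term by term instead of first passing through the infimum via the ``trivial key Lemma''.
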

\begin{proof}
Uniqueness can be proved separately using the following standard martingale argument. Suppose there are two such functions, and name $g$ their difference. The martingale $\left( \chi^{-1}g \right)\left[ B_t\left( W^{(z)} \right) \right]$ is bounded and goes to zero almost surely in infinite time. Therefore $g\left[ B_t\left( W^{(z)} \right) \right]$ is identically zero.

In order to prove that $\psi_\chi$ is given by a character, one has to invoke successively the propositions \ref{proposition:poisson_formula}, \ref{proposition:local2survival} and \ref{proposition:probabilistic_char}. These are the subject of the next subsection.
\end{proof}

\begin{theorem}
\label{thm:main_result2}
The function on $B$ defined by
$$ b \mapsto \left( \delta^{-\half} \chi^{-1} \Wc_\chi \right)\left( b \right)$$
is harmonic for the random walk $\left( B_t\left( W^{(z)} \right) ; t \in \N \right)$.
\end{theorem}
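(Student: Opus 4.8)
The plan is to reduce harmonicity to one step of the walk, strip off the character $\delta^{-\half}\chi^{-1}$, and recognise what remains as the eigenvalue equation satisfied by $\Wc_\chi$ under the spherical Hecke operator attached to the minuscule coweight $\Lambda^\vee$ --- i.e. as the Pieri-type difference equation of Shintani. Since $\Wc_\chi$ is right-$K$-invariant by \eqref{eq:torsion} while $\delta$ and $\chi$ are trivial on $N$ and on $T\left(\Oc\right)=T\cap K$, the function $f:=\delta^{-\half}\chi^{-1}\Wc_\chi$ is right-$B\left(\Oc\right)$-invariant; together with the left-invariance of the walk and the Markov property this shows that $\E\left[f\!\left(bB_t\left(W^{(z)}\right)\right)\right]=f(b)$ for all $t\in\N$ and all $b\in B$ follows from the single identity $\E\left[f\!\left(b\,b'\varpi^{-\mu^\vee}b''\right)\right]=f(b)$ with $b\in B$, where $b',b''\sim\Uc\left(B\left(\Oc\right)\right)$ and $\mu^\vee\in W\Lambda^\vee$ with law \eqref{eq:rw_lattice} are independent. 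In that identity $b''\in B\left(\Oc\right)$ is absorbed by right-$B\left(\Oc\right)$-invariance of $f$; writing $b'=n't'$ with $n'\in N\left(\Oc\right)$, $t'\in T\left(\Oc\right)$, commuting $t'$ past $\varpi^{-\mu^\vee}$ and invoking right-$K$-invariance of $\Wc_\chi$ again; and using $\left(\delta^{-\half}\chi^{-1}\right)\!\left(\varpi^{-\mu^\vee}\right)=q^{\langle\rho,\mu^\vee\rangle}e^{-\langle z,\mu^\vee\rangle}$ from \eqref{eq:def_chi_z}--\eqref{eq:def_modular}, one finds --- the drift/Satake weights $e^{\pm\langle z,\mu^\vee\rangle}$ cancelling against the factor $e^{\langle z,\mu^\vee\rangle}/\ch V\left(\Lambda^\vee\right)(z)$ built into \eqref{eq:rw_lattice} --- that the statement is equivalent to the purely geometric relation
\[
\sum_{\nu\in W\Lambda^\vee}q^{\langle\rho,\nu\rangle}\int_{N\left(\Oc\right)}\Wc_\chi\!\left(b\,n'\varpi^{-\nu}\right)dn'=\ch V\left(\Lambda^\vee\right)(z)\,\Wc_\chi(b),\qquad b\in B .
\]
For $G=PGL_2\left(\Kc\right)$ and $b=\varpi^{-n\omega_1^\vee}$, unwinding \eqref{eq:torsion} turns this into Shintani's difference equation $q^{\half}\Wc_\chi\!\left(\varpi^{-(n+1)\omega_1^\vee}\right)+q^{-\half}\Wc_\chi\!\left(\varpi^{-(n-1)\omega_1^\vee}\right)=\ch V\!\left(\omega_1^\vee\right)(z)\,\Wc_\chi\!\left(\varpi^{-n\omega_1^\vee}\right)$, the Pieri rule $\ch V\!\left(\omega_1^\vee\right)\ch V\!\left(n\omega_1^\vee\right)=\ch V\!\left((n+1)\omega_1^\vee\right)+\ch V\!\left((n-1)\omega_1^\vee\right)$ up to the renormalisation by $\delta^{\half}$.

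I would establish the displayed relation from two classical ingredients. The first is that $\Wc_\chi$ is a Hecke eigenfunction: writing $\Wc_\chi(g)=\ell\!\left(\pi(g)\Phi_{\chi^{w_0}}\right)$ for the Jacquet functional $\ell$ and the right-translation action $\pi$ on $I\left(\chi^{w_0}\right)$, one gets $\Wc_\chi*\mathds{1}_{K\varpi^{-\Lambda^\vee}K}=\omega_{\chi^{w_0}}\!\left(\mathds{1}_{K\varpi^{-\Lambda^\vee}K}\right)\Wc_\chi$ because $\pi\!\left(\mathds{1}_{K\varpi^{-\Lambda^\vee}K}\right)$ acts on the spherical vector $\Phi_{\chi^{w_0}}$ by the Satake character $\omega_{\chi^{w_0}}$ (interchanging the Jacquet integral with the compactly supported Hecke convolution is legitimate for $\Re(z)\in\overset{\circ}{C}$ by absolute convergence). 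By the Satake isomorphism (subsection \ref{subsection:satake}) this eigenvalue is a symmetric function of the Satake parameter, hence unchanged when $w_0 z$ is replaced by $z$, and for a minuscule coweight the Satake transform of $\mathds{1}_{K\varpi^{-\Lambda^\vee}K}$ carries no lower-order terms, so $\omega_{\chi^{w_0}}\!\left(\mathds{1}_{K\varpi^{-\Lambda^\vee}K}\right)=q^{\langle\rho,\Lambda^\vee\rangle}\ch V\left(\Lambda^\vee\right)(z)$. The second ingredient is the Iwasawa geometry of $K\varpi^{-\Lambda^\vee}K$: by minusculeness it meets the stratum $N\varpi^{-\nu}K$ exactly for $\nu\in W\Lambda^\vee$, the piece over $\nu$ has volume $q^{\langle\rho,\Lambda^\vee+\nu\rangle}$ and admits $N\left(\Oc\right)$-valued representatives, so that, using the left-$N$-equivariance \eqref{eq:torsion} of $\Wc_\chi$,
\[
\left(\Wc_\chi*\mathds{1}_{K\varpi^{-\Lambda^\vee}K}\right)(b)=q^{\langle\rho,\Lambda^\vee\rangle}\sum_{\nu\in W\Lambda^\vee}q^{\langle\rho,\nu\rangle}\int_{N\left(\Oc\right)}\Wc_\chi\!\left(b\,n'\varpi^{-\nu}\right)dn' .
\]
Combining the two yields the geometric relation, hence the one-step identity, hence the theorem. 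Equivalently, as announced in the introduction, $\left(B_t\left(W^{(z)}\right)\right)$ is the Borel part of a spherical random walk on $G$ --- increments supported on $K\varpi^{-\Lambda^\vee}K$, with the increment law on $W\Lambda^\vee$ read off from Macdonald's spherical function --- penalised by the character $\delta^{\half}\chi$ of its diagonal part, and $\delta^{-\half}\chi^{-1}\Wc_\chi$ is the $h$-transform of the Hecke eigenfunction $\Wc_\chi$ under this penalisation.

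The main obstacle is the second ingredient: making the Iwasawa decomposition of $K\varpi^{-\Lambda^\vee}K$ fully explicit --- which strata it meets, the volume of each piece, and a choice of representatives for which the average of $\Wc_\chi$ over a piece equals an $N\left(\Oc\right)$-average --- and then balancing the resulting powers of $q$ against the $\delta^{\half}$-normalisation so that the eigenvalue comes out as exactly $\ch V\left(\Lambda^\vee\right)(z)$, with no stray constant. This is the role of the geometric information about the Cartan and Iwasawa double strata encapsulated in Macdonald's spherical function (subsection \ref{subsection:macdonald}), and it is precisely where the minuscule hypothesis does genuine work: it forces only the orbit $W\Lambda^\vee$ to occur, with clean $q$-powers and a single uniformly distributed $N\left(\Oc\right)$-type parameter per stratum. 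Without it one would have to contend with the full Satake combinatorics and a much less transparent increment distribution.
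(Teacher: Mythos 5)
Your proposal is correct and follows essentially the same route as the paper: the one-step identity you reduce to is exactly the combination of the convolution eigenfunction property (Proposition \ref{proposition:eigenfunction_property}), the minuscule evaluation $\Sc(\Lambda^\vee)=q^{\langle\rho,\Lambda^\vee\rangle}\ch V(\Lambda^\vee)$ (Proposition \ref{proposition:value_S}), and the Iwasawa geometry $N\varpi^{-\nu}K\cap K\varpi^{-\Lambda^\vee}K=N(\Oc)\varpi^{-\nu}K$ with $q^{\langle\rho,\Lambda^\vee+\nu\rangle}=q^{\langle\rho,\Lambda^\vee-w_0\nu\rangle}$ cosets (Corollary \ref{corollary:support}), which are precisely the ingredients the paper uses. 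The only difference is organisational: the paper packages the same computation through the spherical walk on $G$, its Borel part (Propositions \ref{proposition:G_harmonicity} and \ref{proposition:B_increments}) and the penalization Lemma \ref{lemma:penalisation}, whereas you carry out the one-step expectation directly.
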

\begin{proof}
See subsection \ref{subsection:proof_thm_2}.
\end{proof}

Putting together Theorems \ref{thm:main_result1} and \ref{thm:main_result2}, we obtain the:
\begin{theorem}[Shintani-Casselman-Shalika formula \cite{CS}, \cite{Shintani76}]
\label{thm:CS_formula}
\begin{align*}
  & \Wc_z\left( \varpi^{-\lambda^\vee} \right) \\
= & 
       \left\{ \begin{array}{cc}
                   \delta^{\half}\left( \varpi^{-\lambda^\vee} \right) \ch V\left( \lambda^\vee \right)(z)
                   \prod_{\beta \in \Phi^+}\left( 1 - q^{-1} e^{ -\langle \beta^\vee, z \rangle} \right) & \textrm{ if } \lambda^\vee \textrm{ dominant, } \\
                   0                                                                                     & \textrm{ otherwise. }
               \end{array} \right. \nonumber
\end{align*}
\end{theorem}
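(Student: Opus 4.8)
The strategy is simply to assemble the two main theorems and then transport the normalized identity \eqref{eq:normalized_CS_formula} back to the Jacquet integral. First I would record that, by Proposition \ref{proposition:whittaker_properties}, the function $\delta^{-\half}\chi^{-1}\Wc_\chi$ is locally constant (being a locally constant function on $G$), bounded on $B$, has the invariance property \eqref{eq:torsion} restricted to $N\times B\times B(\Oc)$, and satisfies the limiting behavior \eqref{eq:whittaker_asymptotics} with limit $c_G(z)$. By Theorem \ref{thm:main_result2}, the same function is harmonic for $\left(B_t\left(W^{(z)}\right); t\in\N\right)$. Hence $\psi_\chi := \delta^{\half}\cdot\left(\delta^{-\half}\chi^{-1}\Wc_\chi\right)\cdot\chi = \Wc_\chi$ — wait, more carefully: the candidate function $\delta^{-\half}\chi^{-1}\Wc_\chi$ plays the role of $\chi^{-1}\psi_\chi$ in Theorem \ref{thm:main_result1}, so I would set $\psi_\chi^{\mathrm{Whit}} := \delta^{\half}\Wc_\chi\cdot(\delta^{-\half}\Wc_\chi)^{-1}$… the cleanest phrasing is: the function $b\mapsto \delta^{\half}(b)\,\big(\delta^{-\half}\chi^{-1}\Wc_\chi\big)(b)\,\chi(b) = \Wc_\chi(b)$ is not quite $\psi_\chi$; rather $\chi^{-1}\psi_\chi$ should equal $\delta^{-\half}\chi^{-1}\Wc_\chi$, i.e. $\psi_\chi = \delta^{-\half}\Wc_\chi$. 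I would verify this function satisfies all four bullet points of Theorem \ref{thm:main_result1}: harmonicity of $\chi^{-1}\psi_\chi = \delta^{-\half}\chi^{-1}\Wc_\chi$ is exactly Theorem \ref{thm:main_result2}; boundedness is Proposition \ref{proposition:whittaker_properties}; the invariance property follows from \eqref{eq:torsion} since $\delta$ is trivial on $N$ and on $B(\Oc)$ (as $\delta$ is trivial on $N$ and $T\cap K$); and the behavior at infinity \eqref{eq:behavior_at_infty} must be checked — here \eqref{eq:whittaker_asymptotics} gives the limit $c_G(z)$, so one needs the identity $c_G(z) = b(z)e^{\langle z,\rho^\vee\rangle}$.

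The one genuinely computational point, and the step I expect to be the main obstacle, is reconciling the Gindikin–Karpelevich constant with $b(z)e^{\langle z,\rho^\vee\rangle}$ and then propagating the resulting product factors through the final formula. From Theorem \ref{thm:GK_formula},
\[
c_G(z) = \prod_{\beta\in\Phi^+}\frac{1-q^{-1}e^{-\langle\beta^\vee,z\rangle}}{1-e^{-\langle\beta^\vee,z\rangle}},
\]
while by definition $b(z)e^{\langle z,\rho^\vee\rangle} = \prod_{\beta\in\Phi^+}\big(1-e^{-\langle z,\beta^\vee\rangle}\big)^{-1}$, using that $\{\beta^\vee : \beta\in\Phi^+\} = (\Phi^\vee)^+$ and that $2\rho^\vee = \sum_{\beta\in\Phi^+}\beta^\vee$. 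Therefore $c_G(z) = b(z)e^{\langle z,\rho^\vee\rangle}\cdot\prod_{\beta\in\Phi^+}\big(1-q^{-1}e^{-\langle\beta^\vee,z\rangle}\big)$, so the two normalizations differ exactly by the Hall–Littlewood-type factor $\prod_{\beta\in\Phi^+}(1-q^{-1}e^{-\langle\beta^\vee,z\rangle})$. This forces me to be slightly more careful: $\delta^{-\half}\chi^{-1}\Wc_\chi$ has limit $c_G(z)$ at infinity, but Theorem \ref{thm:main_result1} requires limit $b(z)e^{\langle z,\rho^\vee\rangle}$; so the function satisfying the hypotheses of Theorem \ref{thm:main_result1} is
\[
\psi_\chi = \Big(\prod_{\beta\in\Phi^+}\big(1-q^{-1}e^{-\langle\beta^\vee,z\rangle}\big)\Big)^{-1}\,\delta^{-\half}\,\Wc_\chi,
\]
since the extra scalar factor does not affect harmonicity, boundedness, or the $N$-invariance, and it renormalizes the limit to the required value.

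Finally I would invoke the conclusion \eqref{eq:normalized_CS_formula} of Theorem \ref{thm:main_result1}: for this $\psi_\chi$ we get $\psi_\chi(\varpi^{-\lambda^\vee}) = \ch V(\lambda^\vee)(z)$ if $\lambda^\vee$ is dominant and $0$ otherwise. Substituting back,
\[
\Wc_z(\varpi^{-\lambda^\vee}) = \delta^{\half}(\varpi^{-\lambda^\vee})\,\Big(\prod_{\beta\in\Phi^+}\big(1-q^{-1}e^{-\langle\beta^\vee,z\rangle}\big)\Big)\,\psi_\chi(\varpi^{-\lambda^\vee}),
\]
which is exactly the asserted formula (the vanishing for non-dominant $\lambda^\vee$ is inherited directly, and indeed is already visible from the invariance property combined with the Cartan decomposition). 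The remaining bookkeeping — checking $\psi_\chi$ is locally constant and confirming it genuinely is \emph{the} function of Theorem \ref{thm:main_result1}, so that uniqueness applies — is routine. The only subtlety worth flagging in the write-up is that all identities above hold a priori only for $\Re(z)\in\overset{\circ}{C}$ where the Jacquet integral converges, and the final displayed formula then serves as the analytic continuation to all $z$.
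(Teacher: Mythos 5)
Your proposal is correct and follows essentially the same route as the paper: you normalize $\Wc_\chi$ by $\delta^{-\half}$ and by the factor $\prod_{\beta\in\Phi^+}\left(1-q^{-1}e^{-\langle\beta^\vee,z\rangle}\right)$, check the hypotheses of Theorem \ref{thm:main_result1} (harmonicity from Theorem \ref{thm:main_result2}, the remaining properties from Proposition \ref{proposition:whittaker_properties}), and then read off the character via \eqref{eq:normalized_CS_formula}. Your explicit reconciliation of the Gindikin--Karpelevich constant $c_G(z)$ with $b(z)e^{\langle z,\rho^\vee\rangle}$ times the Hall--Littlewood-type factor is precisely the bookkeeping the paper leaves implicit, and you carry it out correctly.
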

\begin{proof}
Let $\psi_\chi: B \rightarrow \C$ denote the normalized Whittaker function as opposed to Jacquet's Whittaker function:
$$ \forall b \in B, \psi_\chi\left( b \right) := \frac{ \Wc_\chi\left( b \right) \delta^{-\half}(b) }
                                                      { \prod_{\beta \in \Phi^+}\left( 1 - q^{-1} e^{ -\langle \beta^\vee, z \rangle} \right) } \ .$$
This function satisfies the hypotheses of Theorem \ref{thm:main_result1}: the harmonicity property is Theorem \ref{thm:main_result2} and the other properties come from Proposition \ref{proposition:whittaker_properties}.
\end{proof}

\subsection{A characterisation of characters}
Theorem \ref{thm:main_result1} can be taken independently from the rest and seen as a characterisation of characters. The proof of this characterisation decomposes into three steps.

\begin{proposition}[Poisson kernel formula]
\label{proposition:poisson_formula}
If $\psi_\chi$ satisfies the hypotheses of Theorem \ref{thm:main_result1} then the following Poisson formula holds:
\begin{align}
\label{eq:poisson_formula}
  \psi_\chi\left( \varpi^{-\lambda^\vee} \right)
= b(z) e^{ \langle z, \lambda^\vee + \rho^\vee \rangle }
  \E\left[ \varphi_N\left( \varpi^{-\lambda^\vee} N_\infty\left( W^{(z)} \right) \varpi^{\lambda^\vee} \right) \right] \ .
\end{align}
\end{proposition}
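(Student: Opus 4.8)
The plan is to exploit the harmonicity of $\chi^{-1}\psi_\chi$ together with the explicit structure of the random walk $\left( B_t\left( W^{(z)} \right); t \in \N \right)$ and let $t \to \infty$. First I would spell out what harmonicity gives us when evaluated at $b = \varpi^{-\lambda^\vee}$: using \eqref{eq:harmonicity_property},
$$ \left(\chi^{-1}\psi_\chi\right)\left( \varpi^{-\lambda^\vee} \right) = \E\left[ \left(\chi^{-1}\psi_\chi\right)\left( \varpi^{-\lambda^\vee} B_t\left( W^{(z)} \right) \right) \right]$$
for every $t$. Then I would decompose $\varpi^{-\lambda^\vee} B_t\left( W^{(z)} \right)$ via the $NA$ decomposition \eqref{eq:NA_decomposition}. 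Writing $B_t\left( W^{(z)} \right) = N_t\left( W^{(z)} \right) \varpi^{-W_t^{(z)}}$ modulo $T\left( \Oc \right)$, and using the invariance property of $\psi_\chi$ on the right under $B\left( \Oc \right)$ and on the left under $N$, one gets
$$ \left(\chi^{-1}\psi_\chi\right)\left( \varpi^{-\lambda^\vee} N_t\left( W^{(z)} \right) \varpi^{-W_t^{(z)}} \right) = \varphi_N\left( \varpi^{-\lambda^\vee} N_t\left( W^{(z)} \right) \varpi^{\lambda^\vee} \right) \cdot \left(\chi^{-1}\psi_\chi\right)\left( \varpi^{-\lambda^\vee - W_t^{(z)}} \right),$$
where one commutes the $N$-part past $\varpi^{-\lambda^\vee}$ at the cost of the conjugation appearing in the character $\varphi_N$, and absorbs the $A$-part into a shifted coweight.

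Next I would take $t \to \infty$ and pass the limit inside the expectation. Two ingredients make this work: by Proposition \ref{proposition:N_infty_CV}, $N_t\left( W^{(z)} \right) \to N_\infty\left( W^{(z)} \right)$ almost surely, and since $\varpi^{-\lambda^\vee}(\cdot)\varpi^{\lambda^\vee}$ is continuous and $\varphi_N$ is locally constant, the first factor converges a.s. to $\varphi_N\left( \varpi^{-\lambda^\vee} N_\infty\left( W^{(z)} \right) \varpi^{\lambda^\vee} \right)$; meanwhile, by the Lemma on the mean drift plus the law of large numbers, $W_t^{(z)} \to \infty$ a.s. inside the interior of $C^\vee$, so $\lambda^\vee + W_t^{(z)}$ is eventually deep in the dominant chamber, and the behavior at infinity \eqref{eq:behavior_at_infty} forces $\left(\chi^{-1}\psi_\chi\right)\left( \varpi^{-\lambda^\vee - W_t^{(z)}} \right) \to b(z) e^{\langle z, \rho^\vee \rangle}$ a.s. Dominated convergence applies because $\chi^{-1}\psi_\chi$ is bounded and $\varphi_N$ is unitary, so the integrand is uniformly bounded. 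Assembling the pieces,
$$ \left(\chi^{-1}\psi_\chi\right)\left( \varpi^{-\lambda^\vee} \right) = b(z) e^{\langle z, \rho^\vee \rangle} \, \E\left[ \varphi_N\left( \varpi^{-\lambda^\vee} N_\infty\left( W^{(z)} \right) \varpi^{\lambda^\vee} \right) \right],$$
and multiplying through by $\chi\left( \varpi^{-\lambda^\vee} \right) = e^{\langle z, \lambda^\vee \rangle}$ yields exactly \eqref{eq:poisson_formula}.

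The main obstacle I anticipate is the bookkeeping in the decomposition step: one must be careful that $B_t\left( W^{(z)} \right)$ is only defined modulo $T\left( \Oc \right)$ on the right and check that every manipulation — commuting $N$-parts past the torus element, discarding $B\left( \Oc \right)$ on the right via the invariance property, and verifying that the conjugate $\varpi^{-\lambda^\vee} N_t\left( W^{(z)} \right) \varpi^{\lambda^\vee}$ genuinely lands in $N$ (not just $B$) so that $\varphi_N$ makes sense — is legitimate. A secondary, more technical point is justifying the interchange of limit and expectation: one should confirm that the a.s.\ convergence $W_t^{(z)} \to \infty$ holds \emph{away from the walls} in the sense of Remark \ref{rmk:infinity_in_C}, which is where the Lemma on the strict positivity of the mean drift is essential, and that the convergence in \eqref{eq:behavior_at_infty} is uniform enough to be applied along the random sequence $\lambda^\vee + W_t^{(z)}$.
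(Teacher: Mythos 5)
Your proposal is correct and follows essentially the same route as the paper: apply harmonicity at $\varpi^{-\lambda^\vee}$, use the $NA$ decomposition \eqref{eq:NA_decomposition} together with the invariance property to split off the factor $\varphi_N\left( \varpi^{-\lambda^\vee} N_t\left( W^{(z)} \right) \varpi^{\lambda^\vee} \right)$, and let $t \to \infty$ using Proposition \ref{proposition:N_infty_CV}, the drift of $W^{(z)}$ into the interior of $C^\vee$, the behavior at infinity \eqref{eq:behavior_at_infty}, and dominated convergence via boundedness and unitarity of $\varphi_N$. The only cosmetic difference is that the paper runs the argument for a general $b = na \in NA$ and your version specializes directly to $b = \varpi^{-\lambda^\vee}$, which suffices for the stated formula.
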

\begin{proof}
Because $f := \chi^{-1} \psi_\chi$ is harmonic for $\left( B_t\left( W^{(z)} \right) ; t \in \N \right)$, we have that for every $b = na \in NA$ and for all $t \in \N$:
$$ f\left( b \right) = \E\left[ f\left( b B_t\left( W^{(z)} \right)  \right) \right] \ .$$
Using the $NA$ decomposition from equation \eqref{eq:NA_decomposition}:
$$b B_t\left( W^{(z)} \right)
= n a N_t\left( W^{(z)} \right) a^{-1} a A_t\left( W^{(z)} \right) \mod T\left( \Oc \right),$$
and the invariance property, we have that for all $t \in \N$:
\begin{align}
\label{eq:intermediate_f}
f\left( na \right) & = \E\left[ \varphi_N\left( n a N_t\left( W^{(z)} \right) a^{-1} \right) f\left(a A_t\left( W^{(z)} \right)\right) \right] \ .
\end{align}
Since $W^{(z)}$ goes to infinity while staying in the interior of $C^\vee$, we have from the prescribed behavior at infinity in equation \eqref{eq:behavior_at_infty}:
$$ f\left(a A_t\left( W^{(z)} \right)\right) = f\left(a \varpi^{ -W^{(z)}_t } \right) \stackrel{t \rightarrow \infty}{\longrightarrow} b(z) e^{ \langle z, \rho^\vee \rangle } \ .$$
Adding to that the convergence of $N_t\left( W^{(z)} \right)$ to $N_\infty\left( W^{(z)} \right)$ (Proposition \ref{proposition:N_infty_CV}), we have the result by passing to the limit under the expectation in equation \eqref{eq:intermediate_f}. Boundedness for $f$ gives the required domination in order to apply Lebesgue's dominated convergence theorem.
\end{proof}

Let $f: D \rightarrow \R$ be a bounded harmonic function on a regular plane domain $D \subset \C$. Recall from classical potential theory that $f$ is entirely determined by its values on the boundary $\partial D$. More precisely, the Poisson formula says that $f$ can be represented as the integral of $f_{|\partial D}$ against the Poisson kernel. By interpreting the distribution of $N_\infty\left( W^{(z)} \right)$ as a Poisson kernel, the probabilistic representation formula \eqref{eq:poisson_formula} is indeed a Poisson formula: the value of the harmonic function $\chi^{-1} \psi_\chi$ prescribed by a measure on the ``boundary'' $N$. This measure is the law of $N_\infty\left( W^{(z)} \right)$. A completely analogous fact holds in the Archimedean case \cite{chh14c}, \cite{thesis}.

Now, we proceed to rearranging the expression in equation \eqref{eq:poisson_formula}.
\begin{proposition}
\label{proposition:local2survival}
\begin{align*}
\E\left[ \varphi_N\left( \varpi^{\lambda^\vee} N_\infty\left( W^{(z)} \right) \varpi^{-\lambda^\vee} \right) \right] & = \P\left( \lambda^\vee + W^{(z)} \textrm{ remains in } C^\vee \right)
\end{align*}
\end{proposition}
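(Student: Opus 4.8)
The plan is to compute the expectation by isolating the simple-root coordinates of $N_\infty\left( W^{(z)} \right)$, on which $\varphi_N$ becomes an explicit product of copies of $\psi$, and then to exploit the ultrametric dichotomy: $\psi$ is trivial on $\Oc$, whereas it integrates to $0$ over any fractional ideal strictly containing $\Oc$. Concretely, since $\chi_{st}^- = \sum_{\alpha \in \Delta} \chi_\alpha^-$ is a group homomorphism $N \to \G_a$, the character $\varphi_N = \psi \circ \chi_{st}^-$ factors through the abelianisation $N/[N,N] \cong \prod_{\alpha \in \Delta} \G_a$. Writing $n_\alpha \in \Kc$ for the coordinate of $n \in N$ along the simple negative root subgroup $x_{-\alpha}$, one has $\varphi_N(n) = \prod_{\alpha \in \Delta} \psi\left( n_\alpha \right)$, and because a torus element acts diagonally on these coordinates,
\begin{align*}
\varphi_N\left( \varpi^{-\lambda^\vee} n \varpi^{\lambda^\vee} \right) & = \prod_{\alpha \in \Delta} \psi\left( \varpi^{ \langle \alpha, \lambda^\vee \rangle } n_\alpha \right) \ .
\end{align*}

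Next I would make the abelianised coordinates of $N_\infty\left( W^{(z)} \right)$ explicit. Passing the recursion \eqref{eq:def_N_t_1}--\eqref{eq:def_N_t_2} to $N/[N,N]$, where the group law becomes addition and conjugation by $\varpi^{-W^{(z)}_t}$ scales the $x_{-\alpha}$-coordinate by $\varpi^{ \langle \alpha, W^{(z)}_t \rangle }$, one obtains for each $\alpha \in \Delta$ a $\varpi$-adically convergent series
\begin{align*}
\left( N_\infty\left( W^{(z)} \right) \right)_\alpha & = (n_0)_\alpha + \sum_{t \geq 1} \left( \varpi^{ \langle \alpha, W^{(z)}_{t-1} \rangle } (n'_t)_\alpha + \varpi^{ \langle \alpha, W^{(z)}_{t} \rangle } (n_t)_\alpha \right) \ ,
\end{align*}
the convergence holding because $W^{(z)}_t \to \infty$ inside the interior of $C^\vee$ almost surely and $N\left( \Oc \right)$ is compact (Proposition \ref{proposition:N_infty_CV}; see Example \ref{example:PGL2} for the $PGL_2$ picture). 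Since the abelianisation map $N\left( \Oc \right) \to \prod_{\alpha \in \Delta} \Oc$ is a surjective homomorphism of compact groups, it pushes Haar measure to Haar measure, so the coordinates $(n_0)_\alpha$, $(n'_t)_\alpha$, $(n_t)_\alpha$ appearing above are \iid\ $\Uc\left( \Oc \right)$ and, moreover, independent of the lattice walk $W^{(z)}$. Conjugating by $\varpi^{-\lambda^\vee}$ multiplies the $\alpha$-coordinate by $\varpi^{ \langle \alpha, \lambda^\vee \rangle }$, i.e.\ it replaces each exponent $\langle \alpha, W^{(z)}_s \rangle$ by $\langle \alpha, \lambda^\vee + W^{(z)}_s \rangle$ (using $W^{(z)}_0 = 0$ for the $(n_0)_\alpha$ term).

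Then I would condition on the trajectory of $W^{(z)}$ and integrate out the Haar variables. By additivity of $\psi$, the quantity $\varphi_N\left( \varpi^{-\lambda^\vee} N_\infty\left( W^{(z)} \right) \varpi^{\lambda^\vee} \right)$ factors as a product (eventually stabilising to $1$) of terms $\psi\left( \varpi^{ \langle \alpha, \lambda^\vee + W^{(z)}_s \rangle } c \right)$ with $c \sim \Uc\left( \Oc \right)$, these being conditionally independent given $W^{(z)}$. The elementary identity
\begin{align*}
\E\left[ \psi\left( \varpi^{m} \, \Uc\left( \Oc \right) \right) \right] & = \mathds{1}_{ \{ m \geq 0 \} } \qquad (m \in \Z)
\end{align*}
--- immediate for $m \geq 0$ since $\psi$ kills $\Oc$, and for $m < 0$ because $c \mapsto \varpi^m c$ pushes $\Uc\left( \Oc \right)$ onto the normalised Haar measure of the compact group $\varpi^m \Oc$, over which the nontrivial character $\psi$ integrates to $0$ --- then gives
\begin{align*}
\E\left[ \varphi_N\left( \varpi^{-\lambda^\vee} N_\infty\left( W^{(z)} \right) \varpi^{\lambda^\vee} \right) \mid W^{(z)} \right] & = \prod_{\alpha \in \Delta} \prod_{s \geq 0} \mathds{1}_{ \{ \langle \alpha, \lambda^\vee + W^{(z)}_s \rangle \geq 0 \} } = \mathds{1}_{ \{ \lambda^\vee + W^{(z)} \text{ remains in } C^\vee \} } \ ,
\end{align*}
where one uses that $\lambda^\vee + W^{(z)}_s$ lies in $C^\vee$ precisely when $\langle \alpha, \lambda^\vee + W^{(z)}_s \rangle \geq 0$ for every simple root $\alpha$, and that the indices $s$ hit by the indicators exhaust $\N$. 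Taking expectations proves the proposition (with the conjugation as it appears in the Poisson formula \eqref{eq:poisson_formula}).

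I expect the only real work to be in the abelianisation step: making rigorous that $\varphi_N$ and the conjugation action genuinely descend to $N/[N,N]$, that the convergent infinite product in $N$ maps to the convergent series written above, and that the coordinate projections of the Haar-distributed increments $(b_t)$, $(b'_t)$ remain independent, uniform and independent of $W^{(z)}$. The remaining ingredients --- the one-line character computation and the conditioning argument --- are routine.
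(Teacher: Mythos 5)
Your argument is correct and follows essentially the same route as the paper: reduce $\varphi_N$ to the simple-root coordinates $\chi_\alpha^-$, expand $\chi_\alpha^-\left( N_\infty\left( W^{(z)} \right) \right)$ as a convergent series of Haar variables scaled by $\varpi^{\langle \alpha, W^{(z)}_s \rangle}$, and integrate out the Haar variables conditionally on $W^{(z)}$ using $\E\left[ \psi\left( \varpi^{m} \Uc\left( \Oc \right) \right) \right] = \mathds{1}_{\{m \geq 0\}}$. The only organizational difference is that the paper first collapses each series to a single term $\varpi^{\inf_{s} \langle \alpha, W^{(z)}_s \rangle} C^\alpha$ via its ``trivial key Lemma'' and then averages once per simple root, whereas you apply the averaging identity term by term with a stabilisation/dominated-convergence argument for the infinite product --- both yield the same indicator of $\lambda^\vee + W^{(z)}$ staying in $C^\vee$.
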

Basically, we claim that if we kill the random walk $\lambda^\vee + W^{(z)}$ upon exiting $C^\vee$, the $\varpi$-adic integral obtained from the distribution of $N_\infty\left( W^{(z)} \right) $ is nothing but a probability of survival. Proposition \ref{proposition:local2survival} is a consequence of three simple lemmas. 

\begin{lemma}[``Trivial key Lemma'']
\label{lemma:trivial_key_lemma}
 If $U$ and $U'$ are independent and distributed as $\Uc\left( \Oc \right)$, then for any two integers $a$ and $b$ in $\Z$:
 $$ \varpi^{a}U + \varpi^{b}U' \eqlaw \varpi^{\min\left(a,b\right)}U \ .$$
\end{lemma}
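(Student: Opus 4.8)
The plan is to prove the statement by a direct computation of the absolute value of $\varpi^a U + \varpi^b U'$, using only the ultrametric structure of $\Kc$ and the invariance of the uniform measure on $\Oc$ under the relevant operations. Without loss of generality assume $a \leq b$, so $\min(a,b) = a$, and write $\varpi^a U + \varpi^b U' = \varpi^a\left( U + \varpi^{b-a} U' \right)$. Since $b - a \geq 0$, the element $\varpi^{b-a} U'$ lies in $\Oc$, so it suffices to show that $U + \varpi^{b-a} U' \eqlaw \Uc(\Oc)$ when $U, U'$ are independent uniform on $\Oc$; multiplying by the fixed scalar $\varpi^a$ then transports this equality in law to the desired one.

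For the reduced claim, condition on $U' = u'$: the map $x \mapsto x + \varpi^{b-a} u'$ is a translation of $\Oc$ by the fixed element $\varpi^{b-a} u' \in \Oc$, hence a bijection of $\Oc$ preserving the (additive) Haar probability measure $\Uc(\Oc)$. Therefore $U + \varpi^{b-a} u' \eqlaw \Uc(\Oc)$ for each fixed $u'$, and since this conditional law does not depend on $u'$, it is also the unconditional law of $U + \varpi^{b-a} U'$. This uses the independence of $U$ and $U'$ together with the fact that $\Oc$, being a compact group under addition with the subspace measure normalised to total mass one, is exactly $\Uc(\Oc)$ and is translation-invariant. One small point worth spelling out: in the degenerate case $a = b$ one has $b - a = 0$ and the argument still applies, with $\varpi^{0} = 1$, giving $U + U' \eqlaw \Uc(\Oc)$.

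There is essentially no hard part here — this is the ``trivial key Lemma'' and the only thing to be careful about is bookkeeping with which of $a, b$ is the minimum, and the observation that pulling out $\varpi^{\min(a,b)}$ is what makes the remaining increment land in $\Oc$ so that translation-invariance of $\Uc(\Oc)$ can be invoked. If one wanted a slicker phrasing, one could note that the distribution of $\varpi^a U + \varpi^b U'$ is the uniform probability measure on the additive coset-free subgroup $\varpi^{\min(a,b)} \Oc$ of $\Kc$, because $\varpi^a \Oc + \varpi^b \Oc = \varpi^{\min(a,b)}\Oc$ as subsets of $\Kc$ and the convolution of the uniform measures on two nested compact subgroups is the uniform measure on the larger one; this is just the ultrametric identity $|x + y| \leq \max(|x|,|y|)$ packaged measure-theoretically.
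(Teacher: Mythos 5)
Your proof is correct: factoring out $\varpi^{\min(a,b)}$ so that the remaining term $\varpi^{b-a}U'$ lands in $\Oc$, and then invoking independence together with translation invariance of the additive Haar measure on the compact group $\Oc$ (equivalently, $\mu_{\Oc} * \nu = \mu_{\Oc}$ for any probability measure $\nu$ on $\Oc$), is exactly the intended argument. The paper itself states this lemma without proof, regarding it as trivial, and your write-up supplies precisely the standard justification, so there is nothing to add.
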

The lemma can be reformulated by saying that the Haar measure on $\Oc$ is tropicalization friendly. Although simple, this fact is crucial because the combinatorics of representation theory using crystals happen in the tropical world, where the operations $\left( \min, +, - \right)$ replace the algebraic $\left( + , \times, / \right)$ (see for example \cite{BFZ96}, or paragraph 42.2 in \cite{Lusztig93}). As an application, one observes the natural appearance of the minimal (signed) distances of the random walk $W^{(z)}$ to each wall of the (dual) Weyl chamber:
$$ \left( \inf_{0 \leq s \leq t} \langle \alpha, W^{(z)}_s \rangle \right)_{\left( \alpha \in \Delta \right)}$$
These are very reminiscent of the Littelmann path model, which we implicitly use. That matter is further discussed in the next subsection. 

\begin{lemma}
\label{lemma:exp_functionals}
For every $t \in \N \bigsqcup \left\{ \infty \right\}$, there exists a family of independent Haar distributed random variables $\left( C^\alpha \right)_{\left( \alpha \in \Delta \right)}$ in the ring of integers $\Oc$ such that:
\begin{align}
\label{eq:exp_functionals}
\forall \alpha \in \Delta, t \in \N, \ \chi_\alpha^-\left( N_{t}\left( W^{(z)} \right) \right)
                         & = \varpi^{ \inf_{0 \leq s \leq t} \langle \alpha, W^{(z)}_s \rangle } C^\alpha \ .
\end{align}
Note that the $\left( C^\alpha \right)_{\left( \alpha \in \Delta \right)}$ implicitly depend on $t$ and are not necessarily independent for different values of $t$.
\end{lemma}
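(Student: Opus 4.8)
The plan is to exploit the fact that each $\chi_\alpha^-$ is a group homomorphism $N\to\G_a$: it kills $[N,N]$ and factors through the abelianization $N/[N,N]\cong\G_a^{\Delta}$ (cf.\ \textsection3 of \cite{Steinberg}, where $[N,N]$ is the span of the non-simple positive root subgroups). Applying such a homomorphism to the multiplicative recursion \eqref{eq:def_N_t_2} turns it into an additive scalar recursion, to which Lemma \ref{lemma:trivial_key_lemma} will apply directly. The first step is to record the behaviour of $\chi_\alpha^-$ under torus conjugation: since $\varpi^{-\mu^\vee}x_{-\beta}(u)\varpi^{\mu^\vee}=x_{-\beta}\!\left(\varpi^{\langle\beta,\mu^\vee\rangle}u\right)$ for $\beta\in\Phi$, and $\chi_\alpha^-$ factors through $N/[N,N]$, on which conjugation by $\varpi^{\mu^\vee}$ acts diagonally, one gets
$$\forall\,\mu^\vee\in X^\vee,\ \forall\,n\in N,\qquad \chi_\alpha^-\!\left(\varpi^{-\mu^\vee}\,n\,\varpi^{\mu^\vee}\right)=\varpi^{\langle\alpha,\mu^\vee\rangle}\,\chi_\alpha^-(n)\ .$$
Applying $\chi_\alpha^-$ to \eqref{eq:def_N_t_2} and unrolling from $N_0(W^{(z)})=n_0$ then yields, for every $t\in\N$,
$$\chi_\alpha^-\!\left(N_t(W^{(z)})\right)=\chi_\alpha^-(n_0)+\sum_{s=1}^{t}\Bigl(\varpi^{\langle\alpha,W_{s-1}^{(z)}\rangle}\chi_\alpha^-(n_s')+\varpi^{\langle\alpha,W_s^{(z)}\rangle}\chi_\alpha^-(n_s)\Bigr)\ .$$

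The second step is the distributional input. The joint map $n\mapsto\left(\chi_\alpha^-(n)\right)_{\alpha\in\Delta}$ is a surjective continuous homomorphism $N(\Oc)\to\Oc^{\Delta}$ (it sends $\prod_{\alpha}x_{-\alpha}(u_\alpha)$ to $(u_\alpha)_\alpha$), so it pushes $\Uc\left(N(\Oc)\right)$ forward to the Haar measure of the compact group $\Oc^{\Delta}$, i.e.\ a product of independent copies of $\Uc(\Oc)$. Since $n_0$ and the $n_s,n_s'$ are independent of one another and of $W^{(z)}$, the whole collection $\{\chi_\alpha^-(n_0),\ \chi_\alpha^-(n_s'),\ \chi_\alpha^-(n_s)\ :\ s\ge1,\ \alpha\in\Delta\}$ is a collection of jointly independent $\Uc(\Oc)$ variables, independent of $W^{(z)}$.

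The third step is to fix $t$, condition on the path $W^{(z)}$, and apply the trivial key Lemma iteratively. Conditionally on $W^{(z)}$, the displayed sum is a finite sum $\sum_i\varpi^{a_i}U_i$ with the $U_i$ independent and $\Uc(\Oc)$-distributed and with exponents $a_i$ ranging over $\{\langle\alpha,W_s^{(z)}\rangle:0\le s\le t\}$; writing $m_\alpha:=\inf_{0\le s\le t}\langle\alpha,W_s^{(z)}\rangle$ and factoring out $\varpi^{m_\alpha}$ (the term from $n_0$ has exponent $0\ge m_\alpha$, and $m_\alpha$ is attained by some term), the remaining sum $\sum_i\varpi^{a_i-m_\alpha}U_i$ has all exponents $\ge0$ with at least one equal to $0$, so the associated map from the relevant product of copies of $\Oc$ into $\Oc$ is a surjective continuous homomorphism and carries Haar to $\Uc(\Oc)$. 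Thus $\varpi^{-m_\alpha}\chi_\alpha^-(N_t(W^{(z)}))$ is $\Uc(\Oc)$-distributed conditionally on $W^{(z)}$, and for distinct $\alpha$ these expressions are built from disjoint subfamilies of the underlying independent Haar variables, so $\left(\varpi^{-m_\alpha}\chi_\alpha^-(N_t(W^{(z)}))\right)_{\alpha\in\Delta}$ is, conditionally on $W^{(z)}$, a vector of independent $\Uc(\Oc)$ variables. As this conditional law does not depend on $W^{(z)}$, the choice $C^\alpha:=\varpi^{-m_\alpha}\chi_\alpha^-(N_t(W^{(z)}))$ produces independent $\Uc(\Oc)$ variables, independent of $W^{(z)}$, satisfying \eqref{eq:exp_functionals} by construction. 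For $t=\infty$ the same argument runs on the infinite series: it converges $\varpi$-adically because its general term tends to $0$ (as $\langle\alpha,W_s^{(z)}\rangle\to+\infty$ almost surely, $W^{(z)}$ drifting into the dual Weyl chamber), $\chi_\alpha^-(N_t(W^{(z)}))\to\chi_\alpha^-(N_\infty(W^{(z)}))$ by Proposition \ref{proposition:N_infty_CV} and continuity of $\chi_\alpha^-$, and $\inf_{s\ge0}\langle\alpha,W_s^{(z)}\rangle$ is almost surely attained at a finite time.

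The only steps that are not pure bookkeeping are the two ``pushforward of Haar'' facts used above — that $(\chi_\alpha^-)_{\alpha\in\Delta}$ carries $\Uc\left(N(\Oc)\right)$ onto $\bigotimes_{\alpha}\Uc(\Oc)$, and that a sum of independent $\Uc(\Oc)$ variables with nonnegative exponents, at least one of which vanishes, is again $\Uc(\Oc)$ — and I expect the former, which hinges on identifying $[N,N]$ with the span of the non-simple positive root subgroups, to be the main technical point to pin down; the latter is exactly the iterated form of Lemma \ref{lemma:trivial_key_lemma}.
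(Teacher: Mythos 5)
Your argument is correct and is essentially the paper's own proof: apply the character $\chi_\alpha^-$ to the recursion \eqref{eq:def_N_t_2}, use the torus-conjugation scaling to unroll $\chi_\alpha^-\left(N_t\left(W^{(z)}\right)\right)$ into a sum of independent $\Uc\left(\Oc\right)$ variables weighted by $\varpi^{\langle\alpha,W_s^{(z)}\rangle}$, and conclude by the trivial key Lemma conditionally on $W^{(z)}$. The only cosmetic differences are that the paper regroups $n_s n_{s+1}'$ into a single Haar variable per time step and checks the joint Haar distribution of $\left(\chi_\alpha^-(c_s)\right)_{\alpha\in\Delta}$ by a direct invariance computation rather than by the general pushforward-of-Haar-under-a-surjective-homomorphism fact, and it dispatches $t=\infty$ by a brief limit argument, which you spell out slightly more.
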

\begin{proof}
Let us only consider $t<\infty$. The case $t=\infty$ is obtained by a limit argument. Because of equations (\ref{eq:def_N_t_1},~ \ref{eq:def_N_t_2}) and $\chi_\alpha^-$ being a character, we have that for every fixed simple root $\alpha$:
\begin{align*}
    \chi_\alpha^-\left( N_{t}\left( W^{(z)} \right) \right)
= & \chi_\alpha^-\left( n_0 \right)
    + \sum_{s=1}^t \chi_\alpha^-\left(  \varpi^{-W^{(z)}_{s-1}  } n_{s}' \varpi^{W^{(z)}_{s-1}}
                                        \varpi^{-W^{(z)}_{s  }}   n_{s}  \varpi^{W^{(z)}_{s}}   \right) \\
= & \chi_\alpha^-\left( \varpi^{-W^{(z)}_{t}}   n_{t}  \varpi^{W^{(z)}_{t}}   \right)
    + \sum_{s=0}^{t-1} \chi_\alpha^-\left(  \varpi^{-W^{(z)}_{s  }}   n_s n_{s+1}'  \varpi^{W^{(z)}_{s}}   \right) \\
= & \sum_{s=0}^t \varpi^{ \langle \alpha, W^{(z)}_s \rangle} \chi_\alpha^-\left( c_s \right) 
\end{align*}
where $\left( c_s ; 0 \leq s \leq t \right)$ are the independent and Haar distributed random variables on $N\left( \Oc \right)$, which we obtain from $c_s = n_s n_{s+1}'$ for $s<t$ and $c_t = n_t$. We now claim that the random variables $\left( \chi_\alpha^-\left( c_s \right) \right)_{ \left( 0 \leq s \leq t, \alpha \in \Delta \right)}$ are all independent and Haar distributed on $\Oc$. Independence for different $0 \leq s \leq t$ is clear. And for fixed $s$, we have by Haar invariance in $N\left( \Oc \right)$ that for every $u \in \Oc$ and simple roots $\left( \alpha, \beta \right) \in \Delta^2$:
$$ \chi_\alpha^-\left( c_s \right) \eqlaw \chi_\alpha^-\left( c_s x_{-\beta}(u) \right)
                                   =      \chi_\alpha^-\left( c_s  \right) + u \mathds{1}_{\alpha=\beta} \ .
$$
Therefore the vector $\left( \chi_\alpha^-\left( c_s \right) \right)_{\alpha \in \Delta}$ is Haar distributed on $\Oc^{\Card \Delta}$, which is the same as saying that the $\chi_\alpha^-\left( c_s \right)$ for different $\alpha \in \Delta$ are independent and Haar distributed.

In the end, the result is obtained by applying the ``trivial key Lemma'' \ref{lemma:trivial_key_lemma}.
\end{proof}

Recall that $\psi: \left( \Kc, + \right) \rightarrow \left( \C^*, \times \right)$ is an additive character, trivial on $\Oc$ but non trivial on $\varpi^{-1} \Oc$. Here is another simple but important property that bridges Haar integration over $\Oc$ and taking indicator functions:
\begin{lemma}[Averaging Lemma]
\label{lemma:averaging}
If $U \eqlaw \Uc\left( \Oc \right)$, then:
$$ \forall x \in \Kc, \E\left[ \psi\left( x U \right) \right] = \mathds{1}_{ \left\{ x \in \Oc \right\} } \ .$$
\end{lemma}
\begin{proof}
The claim is clearly true if $x \in \Oc$, as $\psi$ is identically $1$ on $\Oc$. If $x \notin \Oc$, there is an element $y \in \Oc$ such that $xy = \varpi^{-1}$. Haar invariance gives $U \eqlaw U + y$ and therefore $ x U \stackrel{\Lc}{=} x U + \varpi^{-1}$. Hence:
$$ \psi\left( x U \right) \stackrel{\Lc}{=} \psi\left( x U \right) \zeta$$
where $\zeta = \psi\left( \varpi^{-1} \right)$ is a non-trivial $q$-th root of unity. One concludes by the classical averaging roots of unity:
$$ 0 = \frac{1}{q} \sum_{k=0}^{q-1} \zeta^{k} = \frac{1}{q} \left( \sum_{k=0}^{q-1} \zeta^{k} \right)\E\left( \psi\left( x U \right) \right) 
     = \frac{1}{q} \sum_{k=0}^{q-1} \E\left( \zeta^k \psi\left( x U \right) \right)
     = \E\left[ \psi\left( x U \right) \right] \ .$$
\end{proof}

\begin{proof}[ {\bf Proof of Proposition \ref{proposition:local2survival}} ]
The following sequence of equalities hold.
\begin{align*}
  \quad & \E\left[ \varphi_N\left( \varpi^{-\lambda^\vee} N_\infty\left( W^{(z)} \right) \varpi^{\lambda^\vee} \right) \right] \\
\stackrel{ \textrm{Eq.} \eqref{eq:def_psi_N} }{=}
        & \E\left[ \prod_{\alpha} \psi \circ \chi_\alpha^-\left( \varpi^{-\lambda^\vee} N_{\infty}\left( W^{(z)} \right) \varpi^{\lambda^\vee} \right) \right]\\
\stackrel{ \textrm{Lemma } \ref{lemma:exp_functionals} }{=}
        & \E\left[ \prod_{\alpha} \psi\left( \varpi^{ \langle \alpha, \lambda^\vee \rangle + \inf_{0 \leq s} \langle \alpha, W^{(z)}_s \rangle } C^\alpha \right) \right]\\
\stackrel{ \textrm{Lemma } \ref{lemma:averaging} }{=}
        & \E\left[ \prod_{\alpha} \mathds{1}_{\left\{ \inf_{0 \leq s} \langle \alpha, \lambda^\vee + W^{(z)}_s \rangle \geq 0 \right\} } \right] 
          \textrm{ (Conditionally on $W^{(z)}$)}\\
= \quad & \P\left( \forall \alpha \in \Delta,~ \inf_{0 \leq s} \langle \alpha, \lambda^\vee + W^{(z)}_s \rangle \geq 0 \right)\\
= \quad & \P\left( \lambda^\vee + W^{(z)} \textrm{ remains in } C^\vee \right) \ .
\end{align*}
\end{proof}

\begin{example}[Example \ref{example:PGL2} continued]
Let us illustrate the previous computations in the case of $PGL_2\left( \Kc \right)$. Recall that for $t \in \N \bigsqcup \{ \infty \}$:
$$ N_{t}\left( W^{(z)} \right) = \begin{pmatrix} 1 & 0\\ \sum_{s=0}^t c_s \varpi^{W_s^{(z)}} & 1 \end{pmatrix}$$
where the $\left(c_t ; t \in \N \right)$ are independent Haar distributed random variables on $\Oc$. The only character of $N$ gives the coefficient below the diagonal, and we have the equality in law:
$$ \sum_{s=0}^t c_s \varpi^{W_s^{(z)}} \eqlaw \varpi^{ \inf_{ 0 \leq s \leq t} W_s^{(z)}} \Uc\left( \Oc \right) \ .$$
Hence, if $\lambda^\vee = \lambda \omega^\vee_1$ with $\lambda \in \Z$:
\begin{align*}
  & \E\left[ \varphi_N\left( \varpi^{-\lambda^\vee} N_\infty\left( W^{(z)} \right) \varpi^{\lambda^\vee} \right) \right] \\
= & \E\left[ \varphi_N\left( \begin{pmatrix} \varpi^{ -\lambda} & 0\\ 0 & 1 \end{pmatrix}
                             \begin{pmatrix} 1 & 0\\ \varpi^{ \inf_{ 0 \leq s < \infty} W_s^{(z)}} \Uc\left( \Oc \right) & 1 \end{pmatrix}
                             \begin{pmatrix} \varpi^{\lambda} & 0\\ 0 & 1 \end{pmatrix}
                       \right)  \right] \\
= & \E\left[ \psi\left( \varpi^{ \lambda + \inf_{ 0 \leq s < \infty} W_s^{(z)} }
                        \Uc\left( \Oc \right)
                 \right)  \right]\\
= & \P\left( \lambda + \inf_{ 0 \leq s < \infty} W_s^{(z)} \geq 0 \right) \ .
\end{align*}
This is indeed the probability of the random walk $\lambda + W^{(z)}$ staying positive. For a simple random walk drifting to $\infty$, it is well-known that minus the infinimum is a geometric random variable. Here the parameter is $e^{-2z} < 1$. If $\lambda \geq 0$:
$$ \E\left[ \varphi_N\left( \varpi^{-\lambda^\vee} N_\infty\left( W^{(z)} \right) \varpi^{\lambda^\vee} \right) \right] 
 = \P\left( \textrm{Geom}\left( e^{-2z} \right) \leq \lambda \right)
 = 1-e^{-2z \left(\lambda + 1 \right) } \ .
 $$
\end{example}

In section \ref{section:reflection_principle}, we will prove a reflection principle that allows to relate the probability of $\lambda^\vee + W^{(z)}$ never exiting the Weyl chamber to the character of the irreducible representation $V\left( \lambda^\vee \right)$. This is exactly corollary 7.7 in \cite{LLP}, which is proved using the Littelmann path model, also in the minuscule case. We give an elementary proof using the reflection principle. 

\begin{proposition}
\label{proposition:probabilistic_char} 
If $\lambda^\vee$ is dominant:
$$ \ch V\left( \lambda^\vee \right)(z)
= b(z) e^{ \langle z, \lambda^\vee + \rho^\vee \rangle}
  \P\left( \lambda^\vee + W^{(z)}\textrm{ remains in } C^\vee \right) \ .$$
\end{proposition}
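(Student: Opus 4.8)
The plan is to reduce the survival probability to a finite-horizon one, apply the reflection principle of Section~\ref{section:reflection_principle}, and then let time tend to infinity. Concretely, I would first set $x := \lambda^\vee + \rho^\vee$, which is regular dominant since $\lambda^\vee$ is dominant and $\rho^\vee$ is strictly dominant. Because $\langle \alpha, \rho^\vee \rangle = 1$ for every simple root $\alpha$ and all the pairings $\langle \alpha, \lambda^\vee + W^{(z)}_s \rangle$ are integers, the event $\{\lambda^\vee + W^{(z)} \text{ remains in } C^\vee\}$ coincides with $\{x + W^{(z)}_s \in \overset{\circ}{C^\vee} \text{ for all } s\}$. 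Writing this as the decreasing limit of the finite-horizon events and using continuity of $\P$ from above, it suffices to compute $\lim_{t\to\infty} p_t$, where $p_t := \P\big(x + W^{(z)}_s \in \overset{\circ}{C^\vee},\ 0 \le s \le t\big)$.

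Next I would invoke the reflection principle of Section~\ref{section:reflection_principle}. The minuscule hypothesis is exactly what makes the walk reflectable: for every step $\mu^\vee \in W\Lambda^\vee$ and every simple root $\alpha$ one has $\langle \alpha, \mu^\vee\rangle \in \{-1,0,1\}$, so the walk cannot cross a wall $\{\langle \alpha, \cdot\rangle = 0\}$ without first landing on it, while the step set $W\Lambda^\vee$ is stable under $W$. The reflection principle then expands $p_t$ as an alternating sum of unconstrained transition probabilities evaluated at $W$-reflected lattice points. To put this sum in usable form I would use two facts: twisting the increment weight $e^{\langle z, \cdot\rangle}$ by a reflection turns it into $e^{\langle wz, \cdot\rangle}$, and since $\ch V(\Lambda^\vee)$ is Weyl-invariant the normalisation is unchanged, so the twisted walk is again of the form $W^{(wz)}$; and $W^{(wz)} \eqlaw w \cdot W^{(z)}$ as processes. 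After a change of summation variable on the lattice this yields
$$ p_t = \sum_{w \in W} (-1)^{\ell(w)}\, e^{\langle wz - z,\, x\rangle}\, \P\big(w^{-1}x + W^{(z)}_t \in \overset{\circ}{C^\vee}\big). $$

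Now I would let $t \to \infty$. Since $W^{(z)}$ tends to infinity inside the open dual Weyl chamber almost surely (law of large numbers together with the computation of the mean drift in Section~\ref{section:preliminaries}), for each fixed $w$ one has $w^{-1}x + W^{(z)}_t \in \overset{\circ}{C^\vee}$ for all large $t$ almost surely, so $\P\big(w^{-1}x + W^{(z)}_t \in \overset{\circ}{C^\vee}\big) \to 1$. As $W$ is finite the limit passes through the sum, giving
$$ \P\big(\lambda^\vee + W^{(z)} \text{ remains in } C^\vee\big) = \sum_{w\in W} (-1)^{\ell(w)}\, e^{\langle wz - z,\, \lambda^\vee + \rho^\vee\rangle}. $$
Multiplying by $e^{\langle z, \lambda^\vee + \rho^\vee\rangle}$, reindexing $w \mapsto w^{-1}$ so that $\langle wz, \lambda^\vee + \rho^\vee\rangle$ becomes $\langle z, w(\lambda^\vee + \rho^\vee)\rangle$, and applying the Weyl character formula for $G^\vee$, the right-hand side becomes $\ch V(\lambda^\vee)(z)$ times the Weyl denominator $\sum_w (-1)^{\ell(w)} e^{\langle z, w\rho^\vee\rangle}$. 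Since $b(z)$ is by definition the inverse of this Weyl denominator, this is the claimed identity.

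The main obstacle is the second step: stating the reflection principle with the right conventions (open versus closed chamber, the role of the $\rho^\vee$-shift) and, above all, correctly rewriting the reflected unconstrained terms as $e^{\langle wz - z,\, x\rangle}\,\P\big(w^{-1}x + W^{(z)}_t \in \overset{\circ}{C^\vee}\big)$; this is where the Weyl-invariance of $\ch V(\Lambda^\vee)$ and the identity $W^{(wz)} \eqlaw w \cdot W^{(z)}$ come into play, and it is the only genuinely non-routine point. The reduction to the interior chamber, the monotone passage to the limit, and the final Weyl-character computation are all straightforward.
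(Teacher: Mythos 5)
Your proposal is correct and arrives at exactly the alternating sum of the paper's Theorem~\ref{thm:reflection_principle} (your expression coincides with it after reindexing $w\mapsto w^{-1}$, since $\langle wz-z,\lambda^\vee+\rho^\vee\rangle=\langle z, w^{-1}(\lambda^\vee+\rho^\vee)-(\lambda^\vee+\rho^\vee)\rangle$), followed by the same Weyl character/denominator computation; but your implementation of the reflection step is genuinely different. The paper works directly at infinite time horizon: it randomizes the starting point over $W(\lambda^\vee+\rho^\vee)$ with the exponential weights of \eqref{eq:randomization}, which is precisely what makes the path-reflection involution $T$ preserve the law of the \emph{drifted} walk, and then evaluates $\E[F(\mu^\vee+W^{(z)})]$ in one stroke. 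You instead use a finite-horizon, endpoint-summed (Gessel--Zeilberger/Lindstr\"om--Gessel--Viennot type) weighted reflection identity and pass to the limit $t\to\infty$ using transience of $W^{(z)}$ into the open chamber; this is exactly the ``weights plus limiting argument from finite to infinite paths'' route that the paper acknowledges at the start of Section~\ref{section:reflection_principle} and deliberately avoids as less direct, though it is perfectly valid. Your $\rho^\vee$-shift, the limit interchange (finitely many terms, each unconstrained probability tending to $1$ by the drift lemma), and the final character step are all fine, and your prefactors $e^{\langle z, wx-x\rangle}$ are the correct ones. The one thin spot is the justification of the finite-horizon expansion itself: the two facts you cite ($W$-invariance of $\ch V(\Lambda^\vee)$ and $W^{(wz)}\eqlaw w\,W^{(z)}$) do not by themselves yield it; you still need the path involution (reflect the portion up to the first wall-hitting time, which exists thanks to the minuscule/reflectable property and the $W$-stability of the step set) combined with the observation that the probability of a length-$t$ path depends only on its endpoints, so that with your prefactors the paired ``bad'' terms cancel exactly. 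That endpoint observation is the same cancellation mechanism the paper packages into its randomized starting distribution; once it is made explicit, your argument goes through and buys a self-contained proof that bypasses the randomization, at the cost of an extra limiting step.
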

\begin{proof}
Consequence of Theorem \ref{thm:reflection_principle} and the Weyl character formula ( p.319 Theorem 5.75 in \cite{knapp02}).
\end{proof}

We will now discuss the relationship of the aforementioned results to crystal combinatorics and the similarities with the Archimedean case.

\subsection{Relation to crystals and similarities with the Archimedean case}
We used very indirectly crystal combinatorics, or more precisely the Littelmann path model, thanks to which each instance of the random walk $W^{(z)}$ can be viewed as a crystal element. More precisely, at every finite time $T \in \N$ and under the minuscule hypothesis, the discrete random walk $\left( W^{(z)}_t; t =0, 1, \dots, T \right)$ with increments in $W \Lambda^\vee$ is a crystal element which is obtained by successive tensor products of elements from the crystal with highest weight $\Lambda^\vee$. For further explanations, we refer to \cite{LLP}. In fact, Proposition \ref{proposition:probabilistic_char} holds in the context of general Kac-Moody groups as proved in the subsequent paper \cite{LLP2} of Lesigne {\it et al.}, but one has to observe Littelmann paths continuously in time. Thanks to the Satake correspondence, we have replaced tensor products by convolution products.

In the Archimedean case, virtually the same Poisson formula as in Proposition \ref{proposition:poisson_formula} holds with a Euclidian Brownian motion $\left( W_t^{(z)}; t \in \R_+ \right)$ instead of the random walk $\left( W^{(z)}_t; t =0, 1, 2, \dots\right)$ and a hypoelliptic Brownian motion $\left( B_t\left( W^{(z)} \right) ; t \in \R_+ \right)$ on the group instead of the random walk $\left( B_t\left( W^{(z)} \right) ; t \in \N \right)$. See subsection 6.2.3 of \cite{thesis}. Notice that the discrete setting is richer as it depends on the dominant coweight $\Lambda^\vee$, while the only canonical random walk on a continuous space is the Brownian motion. Also, there exists a related Littelmann path model for geometric crystal \cite{chh14a}. In that context, the Brownian motion $\left( W_t^{(z)}; t \in \R_+ \right)$ can be seen as \emph{geometric} Littelmann path.

In the same direction, the formula in Proposition \ref{proposition:probabilistic_char} has an analogue for the normalised Archimedean function (Proposition 5.4.1, (iii) in \cite{thesis}): instead of considering the probability of survival of a random walk killed upon exiting the Weyl chamber, one has to consider the probability of survival of a Brownian motion with a killing measure given by the Toda potential.

\section{The reflection principle for minuscule walks}
\label{section:reflection_principle}

It is understood that minuscule walks are basically the only reasonable random walks for which the reflection principle applies (see \cite{Biane} end of \textsection 2 ). The fact that increments lie in $W \Lambda^\vee$, the Weyl group orbit of $\Lambda^\vee$, insures that the walk $W^{(z)}$ is \emph{ reflectable }, i.e that $W^{(z)}$ cannot exit $C^\vee$ without having occupied a lattice point on a wall. For a classification of reflectable walks on general lattices, see \cite{GrabinerMagyar}. Their result boils down to taking minuscule steps or compatible ones. 

The following generalized reflection principle has been rediscovered on many occasions. In enumerative combinatorics, if the goal is to count the number of non-intersecting paths of a given length, the formula is known as the Lindstr\"om-Gessel-Viennot principle (see discussion in the section 2.7 of \cite{Stanley}). When dealing with continuous time Markov processes that are evolving independently, and with the goal of computing a non-intersection probability, the result is even older and dates back to Karlin and McGregor \cite{karlin59}. 

Here, we want to consider discrete time random walks with non-uniform weights and an infinite time horizon: the enumerative combinatorics approach would require using weights and a limiting argument from finite paths to infinite paths. Also, if the ambient lattice is isomorphic to $\Z^r$, most of the random walks at hand cannot be treated as $r$ independent walkers: the Karlin-MacGregor formula cannot be invoked as is. Here, we feel that applying existing theorems to our 
setting is tedious. Our result is more elegantly obtained by a direct proof, while of course using the same ideas. Figure \ref{fig:reflection_principle} serves as an illustration.

\begin{theorem}[Reflection principle]
\label{thm:reflection_principle}
We have:
$$ \P\left( \lambda^\vee + W^{(z)} \textrm{ remains in } C^\vee \right) 
 = \sum_{w \in W} \left(-1 \right)^{\ell(w)} e^{ \langle w \left( \lambda^\vee + \rho^\vee \right) - \left( \lambda^\vee + \rho^\vee \right), z \rangle  }$$
\end{theorem}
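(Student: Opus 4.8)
The plan is to compute the survival probability $\P(\lambda^\vee + W^{(z)} \textrm{ remains in } C^\vee)$ by an inclusion-exclusion over the walls of $C^\vee$, implemented via reflections. The starting point is that, because $\Lambda^\vee$ is minuscule, the increments lie in $W\Lambda^\vee$ and so $\langle \alpha, \mu^\vee\rangle \in \{-1,0,1\}$ for every simple root $\alpha$ and every increment $\mu^\vee$; consequently the walk $\lambda^\vee + W^{(z)}$ cannot jump over a wall without landing on it (it is \emph{reflectable}). I would first fix a large but finite time horizon $T$ and work with the finite walk $(W^{(z)}_t)_{0\le t\le T}$, whose path weight factors multiplicatively: a path ending at $\nu^\vee$ with increments $\mu^\vee_1,\dots,\mu^\vee_T$ has probability $\prod_t e^{\langle z,\mu^\vee_t\rangle}/\ch V(\Lambda^\vee)(z)^T = e^{\langle z,\nu^\vee-\lambda^\vee\rangle}/\ch V(\Lambda^\vee)(z)^T$, which depends only on the endpoint. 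This endpoint-only dependence of the weight is exactly what makes the reflection argument go through verbatim as in the classical (Karlin--McGregor / André) setting.

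Next I would set up the involution. For a path of $\lambda^\vee + W^{(z)}$ that does hit $\partial C^\vee$, look at the first time it touches a wall; among the simple roots $\alpha$ with $\langle \alpha, \lambda^\vee + W^{(z)}_t\rangle = 0$ at that time, choose one by a fixed rule, and reflect the \emph{future} of the path in the affine hyperplane through that lattice point orthogonal to $\alpha$ — equivalently, act on all subsequent increments by $s_\alpha$. Because $s_\alpha$ permutes $W\Lambda^\vee$, this is a weight-preserving bijection on path space (at fixed $T$) after accounting for the sign $(-1)^{\ell(w)}$, and it sets up a sign-reversing pairing that cancels all contributions except those of paths that never leave the interior. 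The bookkeeping is organized by the ``reflected'' shift: a path contributing to the image of the involution after reflections corresponding to $w\in W$ has endpoint $w$-translated, and after carefully summing the telescoping cancellations one is left with
\begin{align*}
\P\left( \lambda^\vee + W^{(z)}\textrm{ stays in }C^\vee,\ \textrm{time horizon }T\right)
= \sum_{w\in W}(-1)^{\ell(w)} \P\left( \lambda^\vee + W^{(z)}_T = w(\lambda^\vee+\rho^\vee)-\rho^\vee\right)
\end{align*}
up to the correct shift by $\rho^\vee$ that appears because the natural unshifted walk has a wall passing through lattice points, and one must pass to the ``strict'' chamber for the reflection group to act freely; this $\rho^\vee$-shift is the standard device and I would justify it by noting $\langle \alpha, \lambda^\vee+\rho^\vee\rangle \ge 1$ for dominant $\lambda^\vee$.

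Finally I would let $T\to\infty$. Since the mean drift lies in $\overset{\circ}{C}{}^\vee$ (the Lemma preceding \eqref{eq:B_def_1}), $W^{(z)}_T\to\infty$ inside $C^\vee$ almost surely, so $\P(\lambda^\vee+W^{(z)}_T = w(\lambda^\vee+\rho^\vee)-\rho^\vee)\to 0$ for $w\ne e$, while the left side increases to $\P(\lambda^\vee+W^{(z)}\textrm{ remains in }C^\vee)$. To extract the stated closed form I would instead not take the probabilities to zero but rather renormalise: track the exponential rate, writing $\P(W^{(z)}_T=\eta^\vee) = e^{\langle z,\eta^\vee\rangle} p_T(\eta^\vee)$ where $p_T$ is the transition probability of the \emph{symmetric} walk with weights $e^{\langle z,\mu^\vee\rangle}/\ch V(\Lambda^\vee)(z)$ replaced by their ``$h$-transform'' normalisation; the ratio $p_T(w(\lambda^\vee+\rho^\vee)-\rho^\vee)/p_T(\lambda^\vee)$ tends to $1$ as $T\to\infty$ by the local central limit theorem (all these endpoints differ by a fixed lattice vector and the walk is aperiodic on the relevant coset). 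Collecting the exponential prefactors gives exactly $\sum_{w\in W}(-1)^{\ell(w)} e^{\langle w(\lambda^\vee+\rho^\vee)-(\lambda^\vee+\rho^\vee),z\rangle}$, which is the claim. The main obstacle I anticipate is making the involution rigorous at finite $T$ — in particular handling paths that touch several walls, or the same wall repeatedly, and checking that the first-hitting decomposition together with the chosen tie-breaking rule yields a genuine sign-reversing involution with no fixed points among boundary-touching paths; the $T\to\infty$ limit and the local CLT step are comparatively routine but need the aperiodicity of the minuscule walk on the coset of $Q^\vee$ generated by $W\Lambda^\vee$, which should be checked.
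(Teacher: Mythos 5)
Your overall strategy (a sign-reversing reflection involution plus a passage to infinite horizon) is in the right family, but the pivotal displayed identity at finite horizon is false, and the proposed repair cannot recover it. Take $G=PGL_2$, $\lambda^\vee=0$, $z=0^+$ (symmetric simple walk), $T=2$: the left side $\P\left( \lambda^\vee+W^{(z)} \textrm{ stays in } C^\vee \textrm{ up to } T \right)$ equals $1/2$, while your right side $\P\left( W_2^{(z)}=0\right)-\P\left( W_2^{(z)}=-2\right)=1/2-1/4=1/4$. The reason is that reflecting the increments after the hitting time by $s_\alpha$ is \emph{not} probability-preserving for a walk with drift: the step weights $e^{\langle z,\mu^\vee\rangle}/\ch V\left( \Lambda^\vee \right)(z)$ are not $s_\alpha$-invariant, so the involution multiplies path weights by an exponential factor. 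In the correct finite-horizon statement (Gessel--Zeilberger with weights, reflecting the \emph{starting point}), the factor $e^{\langle z, w\left( \lambda^\vee+\rho^\vee\right)-\left( \lambda^\vee+\rho^\vee\right)\rangle}$ must already be present at finite $T$, multiplying $\P\left( \textrm{walk started at } w\left( \lambda^\vee+\rho^\vee\right)-\rho^\vee \textrm{ is in } C^\vee \textrm{ at time } T\right)$; the limit $T\to\infty$ is then immediate because each such probability tends to $1$ by the drift lemma --- no local CLT is needed. Your local-CLT renormalisation cannot substitute for the missing factor: in your identity the left side converges to a strictly positive number while every pinned endpoint probability $\P\left( \lambda^\vee+W_T^{(z)}=w\left( \lambda^\vee+\rho^\vee\right)-\rho^\vee\right)$ tends to $0$, so no rescaling of the right side can make the two sides match.

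For comparison, the paper avoids finite horizons altogether: it reflects the portion of the path \emph{before} the first hitting time $\tau$ and compensates the non-invariance of the law under this reflection by randomizing the starting point over the orbit $W\left( \lambda^\vee+\rho^\vee\right)$ with weights proportional to $e^{\langle w\left( \lambda^\vee+\rho^\vee\right), z\rangle}$; with that randomization the transform preserves the law of the path, and pairing it with the signed functional $F$ kills the $\{\tau<\infty\}$ contribution directly at infinite horizon, producing exactly the exponential factors in the statement. Your route can be salvaged --- it is essentially the weighted enumerative-combinatorics argument the paper alludes to --- but you must (i) carry the exponential prefactor created by the reflection from the start, working either with both endpoints fixed or with the reflected \emph{initial} point, and (ii) only then sum over endpoints in $C^\vee$ and let $T\to\infty$ using the drift; as written, the involution step and the limiting step both contain genuine errors.
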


\addtocounter{equation}{1}
\setcounter{figure}{ \value{equation} }
\begin{figure}[ht!]
\caption{Illustration of the reflection principle for the $A_2$-type weight lattice}
\label{fig:reflection_principle}
\begin{tikzpicture}[auto, bend right, scale=0.3]
\usetikzlibrary{positioning}
\node at(9.8119,16.9952){$H_{\alpha_1}$};
\node at(-4.7999,-8.3139){$H_{\alpha_1}$};
\path [draw, very thick, color = black] (9.8119,16.9952) --
(-4.7999,-8.3139);
\node at(-9.6,0.0){$H_{\alpha_2}$};
\node at(32.0,-0.0){$H_{\alpha_2}$};
\path [draw, very thick, color = black] (-9.6,0.0) -- (32.0,-0.0);
\node at(-4.7999,8.3139){$H_{\alpha_1 + \alpha_2}$};
\node at(9.5977,-16.6242){$H_{\alpha_1 + \alpha_2}$};
\path [draw, very thick, color = black] (-4.7999,8.3139) --
(9.5977,-16.6242);
\definecolor{color_walk}{rgb}{1.0,0.2,0.2}
\definecolor{color_reflected_walk}{rgb}{0,0,0.9}
\definecolor{color_merged_walk}{rgb}{0.9,0,0.9}
\path [draw, thick, color = color_walk] (7.5,4.33) -- (8.5,4.33) --
(9.5,4.33) -- (9.0,3.464) -- (10.0,3.464) -- (11.0,3.464) -- (10.5,4.33)
-- (10.0,5.196) -- (11.0,5.196) -- (12.0,5.196) -- (11.5,6.062) --
(12.5,6.062) -- (12.0,6.928) -- (11.5,7.794) -- (11.0,8.66) --
(12.0,8.66) -- (11.5,7.794) -- (11.0,8.66) -- (10.5,9.526) --
(11.5,9.526) -- (12.5,9.526) -- (12.0,8.66) -- (13.0,8.66) --
(12.5,9.526) -- (13.5,9.526) -- (13.0,8.66) -- (14.0,8.66) --
(15.0,8.66) -- (14.5,9.526) -- (15.5,9.526) -- (15.0,10.392) --
(16.0,10.392) -- (15.5,11.258) -- (16.5,11.258) -- (16.0,10.392) --
(15.5,9.526) -- (15.0,8.66) -- (14.5,9.526) -- (14.0,10.392) --
(13.5,11.258) -- (14.5,11.258) -- (14.0,10.392) -- (13.5,11.258) --
(14.5,11.258) -- (15.5,11.258) -- (16.5,11.258) -- (16.0,10.392) --
(17.0,10.392) -- (18.0,10.392) -- (19.0,10.392) -- (18.5,9.526) --
(19.5,9.526) -- (19.0,10.392) -- (18.5,9.526) -- (19.5,9.526) --
(19.0,10.392) -- (18.5,9.526) -- (18.0,10.392) -- (17.5,9.526) --
(18.5,9.526) -- (19.5,9.526) -- (19.0,10.392) -- (18.5,11.258) --
(19.5,11.258) -- (19.0,10.392) -- (18.5,9.526) -- (18.0,10.392) --
(17.5,9.526) -- (18.5,9.526) -- (19.5,9.526) -- (19.0,8.66) --
(18.5,9.526) -- (18.0,10.392) -- (19.0,10.392) -- (20.0,10.392) --
(19.5,11.258) -- (20.5,11.258) -- (20.0,12.124) -- (19.5,11.258) --
(20.5,11.258) -- (20.0,12.124) -- (21.0,12.124) -- (20.5,12.99) --
(20.0,12.124) -- (19.5,11.258) -- (19.0,10.392) -- (20.0,10.392) --
(19.5,11.258) -- (20.5,11.258) -- (20.0,10.392) -- (19.5,11.258) --
(19.0,12.124) -- (20.0,12.124) -- (21.0,12.124) -- (22.0,12.124) --
(21.5,11.258) -- (21.0,10.392) -- (22.0,10.392) -- (21.5,9.526) --
(21.0,10.392) -- (20.5,9.526) -- (20.0,10.392) -- (19.5,11.258) --
(19.0,10.392) -- (18.5,9.526) -- (18.0,8.66) -- (19.0,8.66) --
(20.0,8.66) -- (19.5,9.526) -- (19.0,8.66) -- (18.5,9.526) --
(18.0,8.66) -- (17.5,7.794) -- (17.0,6.928) -- (16.5,7.794) --
(16.0,6.928) -- (17.0,6.928) -- (16.5,6.062) -- (17.5,6.062) --
(17.0,5.196) -- (18.0,5.196) -- (17.5,6.062) -- (17.0,6.928) --
(18.0,6.928) -- (19.0,6.928) -- (18.5,7.794) -- (18.0,6.928) --
(17.5,6.062) -- (17.0,6.928) -- (18.0,6.928) -- (17.5,7.794) --
(18.5,7.794) -- (19.5,7.794) -- (20.5,7.794) -- (21.5,7.794) --
(21.0,6.928) -- (20.5,7.794) -- (20.0,6.928) -- (19.5,6.062) --
(19.0,6.928) -- (20.0,6.928) -- (19.5,7.794) -- (19.0,6.928) --
(18.5,7.794) -- (19.5,7.794) -- (20.5,7.794) -- (20.0,8.66) --
(21.0,8.66) -- (22.0,8.66) -- (21.5,7.794) -- (22.5,7.794) --
(22.0,6.928) -- (21.5,7.794) -- (21.0,8.66) -- (20.5,7.794) --
(21.5,7.794) -- (21.0,6.928) -- (22.0,6.928) -- (23.0,6.928) --
(22.5,7.794) -- (23.5,7.794) -- (23.0,8.66) -- (22.5,7.794) --
(23.5,7.794) -- (23.0,8.66) -- (24.0,8.66) -- (23.5,9.526) --
(23.0,8.66) -- (22.5,9.526) -- (22.0,10.392) -- (23.0,10.392) --
(22.5,9.526) -- (22.0,8.66) -- (23.0,8.66) -- (24.0,8.66) --
(23.5,7.794) -- (23.0,6.928) -- (22.5,6.062) -- (22.0,5.196) --
(21.5,4.33) -- (21.0,3.464) -- (20.5,4.33) -- (21.5,4.33) --
(21.0,3.464) -- (22.0,3.464) -- (21.5,4.33) -- (21.0,3.464) --
(22.0,3.464) -- (21.5,4.33) -- (22.5,4.33) -- (23.5,4.33) --
(23.0,3.464) -- (24.0,3.464) -- (25.0,3.464) -- (24.5,2.598) --
(24.0,3.464) -- (25.0,3.464) -- (26.0,3.464) -- (25.5,2.598) --
(26.5,2.598) -- (26.0,1.732) -- (25.5,2.598) -- (25.0,3.464) --
(24.5,2.598) -- (25.5,2.598) -- (25.0,1.732) -- (24.5,0.866) --
(24.0,0.0);
\path [draw, thick, color = color_reflected_walk] (7.5,-4.33) --
(8.5,-4.33) -- (9.5,-4.33) -- (9.0,-3.464) -- (10.0,-3.464) --
(11.0,-3.464) -- (10.5,-4.33) -- (10.0,-5.196) -- (11.0,-5.196) --
(12.0,-5.196) -- (11.5,-6.062) -- (12.5,-6.062) -- (12.0,-6.928) --
(11.5,-7.794) -- (11.0,-8.66) -- (12.0,-8.66) -- (11.5,-7.794) --
(11.0,-8.66) -- (10.5,-9.526) -- (11.5,-9.526) -- (12.5,-9.526) --
(12.0,-8.66) -- (13.0,-8.66) -- (12.5,-9.526) -- (13.5,-9.526) --
(13.0,-8.66) -- (14.0,-8.66) -- (15.0,-8.66) -- (14.5,-9.526) --
(15.5,-9.526) -- (15.0,-10.392) -- (16.0,-10.392) -- (15.5,-11.258) --
(16.5,-11.258) -- (16.0,-10.392) -- (15.5,-9.526) -- (15.0,-8.66) --
(14.5,-9.526) -- (14.0,-10.392) -- (13.5,-11.258) -- (14.5,-11.258) --
(14.0,-10.392) -- (13.5,-11.258) -- (14.5,-11.258) -- (15.5,-11.258) --
(16.5,-11.258) -- (16.0,-10.392) -- (17.0,-10.392) -- (18.0,-10.392) --
(19.0,-10.392) -- (18.5,-9.526) -- (19.5,-9.526) -- (19.0,-10.392) --
(18.5,-9.526) -- (19.5,-9.526) -- (19.0,-10.392) -- (18.5,-9.526) --
(18.0,-10.392) -- (17.5,-9.526) -- (18.5,-9.526) -- (19.5,-9.526) --
(19.0,-10.392) -- (18.5,-11.258) -- (19.5,-11.258) -- (19.0,-10.392) --
(18.5,-9.526) -- (18.0,-10.392) -- (17.5,-9.526) -- (18.5,-9.526) --
(19.5,-9.526) -- (19.0,-8.66) -- (18.5,-9.526) -- (18.0,-10.392) --
(19.0,-10.392) -- (20.0,-10.392) -- (19.5,-11.258) -- (20.5,-11.258) --
(20.0,-12.124) -- (19.5,-11.258) -- (20.5,-11.258) -- (20.0,-12.124) --
(21.0,-12.124) -- (20.5,-12.99) -- (20.0,-12.124) -- (19.5,-11.258) --
(19.0,-10.392) -- (20.0,-10.392) -- (19.5,-11.258) -- (20.5,-11.258) --
(20.0,-10.392) -- (19.5,-11.258) -- (19.0,-12.124) -- (20.0,-12.124) --
(21.0,-12.124) -- (22.0,-12.124) -- (21.5,-11.258) -- (21.0,-10.392) --
(22.0,-10.392) -- (21.5,-9.526) -- (21.0,-10.392) -- (20.5,-9.526) --
(20.0,-10.392) -- (19.5,-11.258) -- (19.0,-10.392) -- (18.5,-9.526) --
(18.0,-8.66) -- (19.0,-8.66) -- (20.0,-8.66) -- (19.5,-9.526) --
(19.0,-8.66) -- (18.5,-9.526) -- (18.0,-8.66) -- (17.5,-7.794) --
(17.0,-6.928) -- (16.5,-7.794) -- (16.0,-6.928) -- (17.0,-6.928) --
(16.5,-6.062) -- (17.5,-6.062) -- (17.0,-5.196) -- (18.0,-5.196) --
(17.5,-6.062) -- (17.0,-6.928) -- (18.0,-6.928) -- (19.0,-6.928) --
(18.5,-7.794) -- (18.0,-6.928) -- (17.5,-6.062) -- (17.0,-6.928) --
(18.0,-6.928) -- (17.5,-7.794) -- (18.5,-7.794) -- (19.5,-7.794) --
(20.5,-7.794) -- (21.5,-7.794) -- (21.0,-6.928) -- (20.5,-7.794) --
(20.0,-6.928) -- (19.5,-6.062) -- (19.0,-6.928) -- (20.0,-6.928) --
(19.5,-7.794) -- (19.0,-6.928) -- (18.5,-7.794) -- (19.5,-7.794) --
(20.5,-7.794) -- (20.0,-8.66) -- (21.0,-8.66) -- (22.0,-8.66) --
(21.5,-7.794) -- (22.5,-7.794) -- (22.0,-6.928) -- (21.5,-7.794) --
(21.0,-8.66) -- (20.5,-7.794) -- (21.5,-7.794) -- (21.0,-6.928) --
(22.0,-6.928) -- (23.0,-6.928) -- (22.5,-7.794) -- (23.5,-7.794) --
(23.0,-8.66) -- (22.5,-7.794) -- (23.5,-7.794) -- (23.0,-8.66) --
(24.0,-8.66) -- (23.5,-9.526) -- (23.0,-8.66) -- (22.5,-9.526) --
(22.0,-10.392) -- (23.0,-10.392) -- (22.5,-9.526) -- (22.0,-8.66) --
(23.0,-8.66) -- (24.0,-8.66) -- (23.5,-7.794) -- (23.0,-6.928) --
(22.5,-6.062) -- (22.0,-5.196) -- (21.5,-4.33) -- (21.0,-3.464) --
(20.5,-4.33) -- (21.5,-4.33) -- (21.0,-3.464) -- (22.0,-3.464) --
(21.5,-4.33) -- (21.0,-3.464) -- (22.0,-3.464) -- (21.5,-4.33) --
(22.5,-4.33) -- (23.5,-4.33) -- (23.0,-3.464) -- (24.0,-3.464) --
(25.0,-3.464) -- (24.5,-2.598) -- (24.0,-3.464) -- (25.0,-3.464) --
(26.0,-3.464) -- (25.5,-2.598) -- (26.5,-2.598) -- (26.0,-1.732) --
(25.5,-2.598) -- (25.0,-3.464) -- (24.5,-2.598) -- (25.5,-2.598) --
(25.0,-1.732) -- (24.5,-0.866) -- (24.0,0.0);
\path [draw, thick, dashed, color = color_merged_walk, ->] (24.5,-0.866) --
(24.0,0.0) -- (23.5,0.866) -- (24.5,0.866) -- (24.0,0.0) --
(23.5,-0.866) -- (23.0,-1.732) -- (24.0,-1.732) -- (25.0,-1.732) --
(24.5,-2.598) -- (24.0,-3.464) -- (23.5,-2.598) -- (23.0,-3.464) --
(22.5,-4.33) -- (22.0,-5.196) -- (23.0,-5.196) -- (24.0,-5.196) --
(25.0,-5.196) -- (24.5,-4.33) -- (24.0,-3.464) -- (23.5,-4.33) --
(23.0,-3.464) -- (22.5,-2.598) -- (22.0,-1.732) -- (23.0,-1.732) --
(22.5,-0.866) -- (23.5,-0.866) -- (23.0,0.0) -- (22.5,0.866) --
(22.0,1.732) -- (21.5,2.598) -- (21.0,1.732) -- (22.0,1.732) --
(21.5,0.866) -- (22.5,0.866) -- (22.0,1.732) -- (23.0,1.732) --
(22.5,0.866) -- (23.5,0.866) -- (24.5,0.866) -- (25.5,0.866) --
(25.0,0.0) -- (24.5,-0.866) -- (24.0,-1.732) -- (25.0,-1.732) --
(24.5,-0.866) -- (24.0,0.0) -- (23.5,-0.866) -- (23.0,0.0) -- (24.0,0.0)
-- (25.0,0.0) -- (24.5,-0.866) -- (24.0,0.0) -- (23.5,0.866) --
(23.0,0.0) -- (22.5,0.866) -- (22.0,0.0) -- (21.5,0.866) -- (21.0,1.732)
-- (22.0,1.732) -- (23.0,1.732) -- (22.5,0.866) -- (23.5,0.866) --
(23.0,1.732) -- (24.0,1.732) -- (23.5,0.866) -- (24.5,0.866) --
(24.0,0.0) -- (25.0,0.0) -- (26.0,0.0) -- (27.0,0.0) -- (28.0,0.0) --
(29.0,0.0) -- (30.0,0.0) -- (31.0,0.0) -- (32.0,0.0) -- (31.5,0.866) --
(31.0,1.732) -- (30.5,0.866) -- (30.0,0.0) -- (29.5,-0.866) --
(30.5,-0.866) -- (30.0,0.0) -- (31.0,0.0) -- (30.5,0.866) --
(30.0,1.732) -- (29.5,2.598) -- (30.5,2.598) -- (30.0,3.464) --
(31.0,3.464) -- (32.0,3.464);
\node at(24.0,0.0){$\times$};
\node at(24.0,-1.0){$\tau$};
\node at (7.5,4.33){$\times$};
\node at (7.5,5.33){$\lambda^{\vee}$};
\node at (7.5,-4.33){$\times$};
\node at (7.5,-5.33){$s_\alpha \lambda^\vee$};

\node (legend_anchor1) at(-9.6,-12.6242) {Rw before $\tau$};
\node (legend_anchor2) [below=0.1cm of legend_anchor1] {Reflected rw};
\node (legend_anchor3) [below=0.1cm of legend_anchor2] {Rw after $\tau$};
\node (A1) [right of=legend_anchor1] {}; \node (B1) [right of=A1] {};
\node (A2) [right of=legend_anchor2] {}; \node (B2) [right of=A2] {};
\node (A3) [right of=legend_anchor3] {}; \node (B3) [right of=A3] {};

\path [draw, thick, color = color_walk, ->]                   (A1) -- (B1); 
\path [draw, thick, color = color_reflected_walk, ->]         (A2) -- (B2); 
\path [draw, thick, dashed, color = color_merged_walk, ->]    (A3) -- (B3); 

\node (steps_anchor) at(-9.6,12.6242) {};
\node (steps_anchor_text) [left of=steps_anchor] {Steps:};
\path [draw, thick, ->]                   (steps_anchor.center) -- ++ ( 1.000, 0.000);
\path [draw, thick, ->]                   (steps_anchor.center) -- ++ (-0.500, 0.866);
\path [draw, thick, ->]                   (steps_anchor.center) -- ++ (-0.500,-0.866);

\end{tikzpicture}
\end{figure}
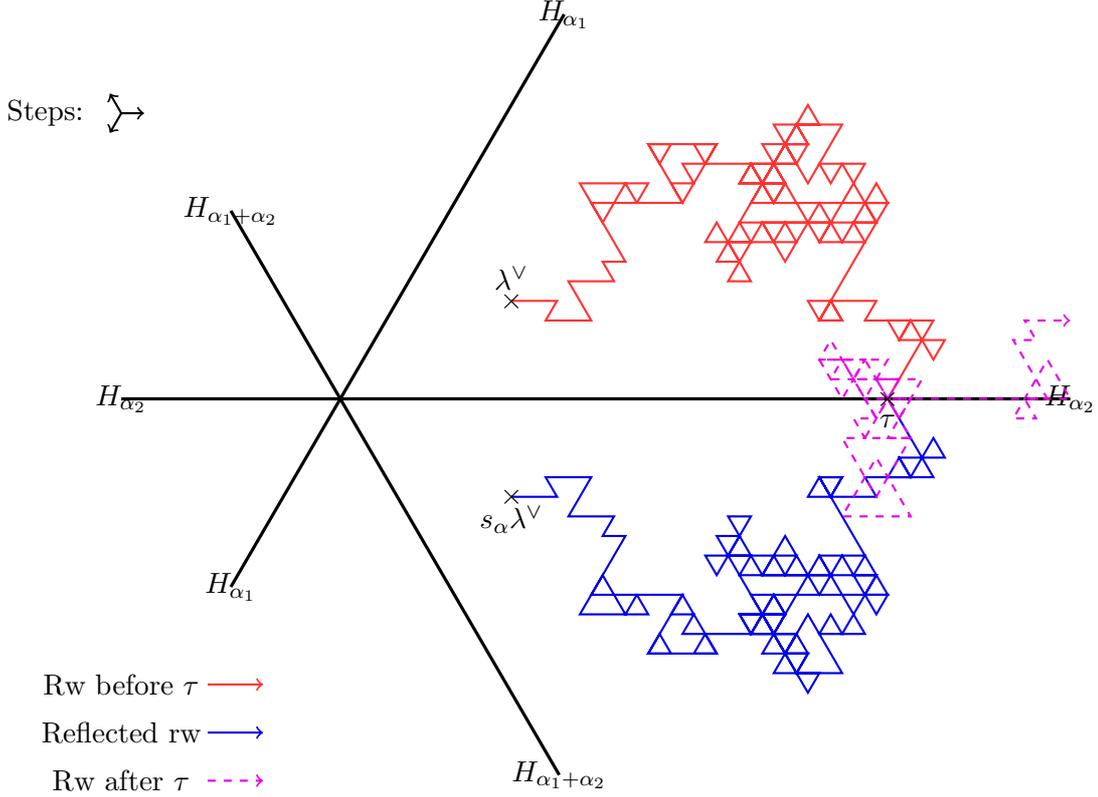

\begin{proof}
Recall that because of the law of large numbers, almost surely, $W^{(z)}$ will eventually enter the Weyl chamber and stay there. By performing a translation by the Weyl covector $\rho^\vee$, we see that $\lambda^\vee + W^{(z)}$ remains in the dual Weyl chamber (walls included) if and only if $\rho^\vee + \lambda^\vee + W^{(z)}$ never hits a wall. This shift is classical. Let
$$ \tau := \inf\left\{ t \in \N \ | \ \rho^\vee + \lambda^\vee + W_{t}^{(z)} \in \partial C^\vee \right\}$$
be the first time $\rho^\vee + \lambda^\vee + W_{t}^{(z)}$ hits a wall. The convention is that $\tau = \infty$ if the event never occurs.

The sample path space is $\Pi = \left( X^\vee \right)^{\N}$. Consider the functional $F: \Pi \rightarrow \R$ defined by:
$$ \forall \pi \in \Pi, \ F\left( \pi \right) := \sum_{w \in W} \left(-1 \right)^{\ell(w)} \mathds{1}_{ \left\{ \pi(0) \in w C^\vee \right\} } \ .$$

As a first ingredient for the reflection principle, we want to define an involutive transform $T: \Pi \rightarrow \Pi$ that reflects the portion of a path before $\tau$, if $\tau<\infty$. If $\tau = \infty$, we set $T\left( \pi \right) = \pi$. Hence:
\begin{align}
\label{eq:tau_infinite}
\tau = \infty & \Rightarrow F \circ T (\pi) = F(\pi) \ .
\end{align}
The reflection considered is with respect to the wall $H_\alpha = \Ker \alpha$ that the random walk hits first. However, if the exit point is on more than one wall, there is an ambiguity in our choice of wall. To that effect, one can choose an arbitrary order on the walls (or equivalently on positive roots). This choice is not essential as its only purpose is to give a well-defined map $T$. If the path $\pi$ hits the hyperplane $H_\alpha$ at the time $\tau$, $\alpha$ being the smallest in our arbitrary order, we set:
$$ T(\pi)_t := \left\{ \begin{array}{cc}
                   s_\alpha \left( \pi(t) \right) & \textrm{ if } t < \tau \ ,\\
                   \pi(t)                         & \textrm{ otherwise. }
               \end{array} \right.
$$
The transform is clearly involutive on the sample path space $\Pi$ and leaves $\tau$ unchanged. Moreover:
\begin{align}
\label{eq:tau_finite}
\tau < \infty & \Rightarrow F \circ T (\pi) = -F(\pi) \ .
\end{align}

At the moment, the transform $T$ does not preserve the distribution of $\rho^\vee + \lambda^\vee + W^{(z)}$. In order to compensate for that issue, we randomize the starting point. This is the second ingredient in the reflection principle, which is implicit in enumerative combinatorics where one counts a number of finite paths. Consider a random coweight $\mu^\vee$ distributed on the Weyl group orbit of $\lambda^\vee + \rho^\vee$ as follows:
\begin{align}
\label{eq:randomization}
\P\left( \mu^\vee = w \left( \lambda^\vee + \rho^\vee \right) \right)
& = \frac{                 e^{ \langle w \left( \lambda^\vee + \rho^\vee \right), z \rangle  }}
         { \sum_{w' \in W} e^{ \langle w'\left( \lambda^\vee + \rho^\vee \right), z \rangle  }} \ .
\end{align}

We claim that, upon this randomization, the distribution of $\mu^\vee + W^{(z)}$ is preserved under $T$. In order to see that, it suffices to check that for any finite time horizon $t \in \N$:
\begin{align}
\label{eq:required_T}
\left( T\left( \mu^\vee + W^{(z)} \right)_s ; 0 \leq s \leq t \right) & \eqlaw \left( \mu^\vee + W^{(z)}_s ; 0 \leq s \leq t \right) \ .
\end{align}
Notice that $T\left( \mu^\vee + W^{(z)} \right)$ and $\mu^\vee + W^{(z)}$ coincide after $\tau$, which is a stopping time. By the strong Markov property for $W^{(z)}$, the question is reduced to proving that for every path $\pi \in \Pi$, hitting $\partial C^\vee$ at time $t$ for the first time, we have:
$$ \P\left( \forall 0 \leq s \leq t, \ T\left( \mu^\vee + W^{(z)} \right)_s = \pi(s) \right) = \P\left( \forall 0 \leq s \leq t, \ \mu^\vee + W^{(z)}_s = \pi(s) \right) \ .$$
Now, the probability of these finite paths is the product, along each path, of the transition probabilities from equation \eqref{eq:rw_lattice}. We see that these probabilities depend only on the increment $\pi(t) - \pi(0)$. More precisely, if $\pi$ hits $H_\alpha = \Ker \alpha$ first, the former equation becomes:
$$ \P\left( \mu^\vee = s_\alpha \pi(0) \right) e^{ \langle s_\alpha (\pi(t) - \pi(0)), z \rangle  }
 = \P\left( \mu^\vee = \pi(0) \right) e^{ \langle \pi(t) - \pi(0), z \rangle  } \ .$$
Because $\pi(t) \in H_\alpha$, $s_\alpha \pi(t) = \pi(t)$. Hence, in the end, equation \eqref{eq:required_T} is equivalent to:
$$ \P\left( \mu^\vee = s_\alpha \pi(0) \right) e^{ \langle - s_\alpha \pi(0), z \rangle  }
 = \P\left( \mu^\vee = \pi(0) \right) e^{ \langle - \pi(0), z \rangle  } \ .$$
From the previous equation, one sees that our choice (equation \eqref{eq:randomization}) for the distribution of $\mu^\vee$ was in fact the only possible choice.

We are now ready to finish the proof. On the one hand, because of implication \eqref{eq:tau_infinite}, and the fact that $\mu^\vee + W^{(z)}$ has a chance of never hitting $\partial C^\vee$ only in the case of $\mu^\vee = \lambda^\vee + \rho^\vee$, we have:
$$ \E\left( F\left( \mu^\vee + W^{(z)} \right) \mathds{1}_{\{ \tau = \infty \}} \right) 
 = \P\left( \mu^\vee = \lambda^\vee + \rho^\vee \right)
   \P\left( \lambda^\vee + \rho^\vee + W^{(z)} \textrm{ never hits } \partial C^\vee \right) \ .$$
On the other hand, because of implication \eqref{eq:tau_finite} and the fact that $T$ preserves the distribution of $\rho^\vee + \Lambda^\vee + W^{(z)}$, we also have:
$$ \E\left( F\left( \mu^\vee + W^{(z)} \right) \mathds{1}_{\{ \tau < \infty \}} \right) = 0 \ .$$

Therefore, by putting together the two previous equations:
\begin{align*}
    \P\left( \lambda^\vee + W^{(z)} \textrm{ remains in } C^\vee \right)
= & \P\left( \lambda^\vee + \rho^\vee + W^{(z)} \textrm{ never hits } \partial C^\vee \right)\\
= & \frac{ \E\left( F\left( \mu^\vee + W^{(z)} \right) \mathds{1}_{\left\{ \tau = \infty \right\}} \right) }{ \P\left( \mu^\vee = \lambda^\vee + \rho^\vee \right) }\\
= & \frac{ \E\left( F\left( \mu^\vee + W^{(z)} \right) \right) }{ \P\left( \mu^\vee = \lambda^\vee + \rho^\vee \right) }\\
= & \sum_{w \in W} \left(-1 \right)^{\ell(w)} \frac{ \P\left( \mu^\vee \in w C^\vee \right) }{ \P\left( \mu^\vee \in C^\vee \right) }\\
= & \sum_{w \in W} \left(-1 \right)^{\ell(w)} e^{ \langle w \left( \lambda^\vee + \rho^\vee \right) - \left( \lambda^\vee + \rho^\vee \right), z \rangle  } \ .
\end{align*}
\end{proof}

\section{Harmonic properties of Whittaker functions}
\label{section:harmonicity_properties}

We will now review the facts we will need around the Satake isomorphism and spherical functions. For more details, the reader is referred to \cite{Gross} for a pedagogical overview of the Satake isomorphism and to \cite{Cartier} for a more complete survey, including $p$-adic representation theory. The complete reference for spherical functions is \cite{macdonald}.

\subsection{About the Satake isomorphism}
\label{subsection:satake}
Given two locally constant functions $\left( \varphi_1, \varphi_2 \right) \in \Fc\left( G \right)$, one can define their convolution $\varphi_1 *_G \varphi_2$ by:
$$ \forall g \in G, \varphi_1 *_G \varphi_2(g) := \int_G \varphi_1\left( g h^{-1} \right) \varphi_2\left( h \right) dh \ .$$
This is possible as soon as one the two functions is compactly supported. Let $H\left( G,K \right) = \Fc\left( K \backslash G / K \right)$ be the space of (compactly supported) $K$-bi-invariant functions on the group. It is also known as the spherical Hecke algebra. Since the convolution of two bi-invariant functions is bi-invariant, $H\left( G,K \right)$ is called a convolution algebra.

In the same fashion, let $H\left( T, T\left( \Oc \right) \right)$ denote the space of functions on $T$, invariant under $T\left( \Oc \right) = T \cap K$. It is also a convolution $\C$-algebra with a much simpler structure. It is spanned by the indicator functions $\left( \mathds{1}_{\left\{ \varpi^{\mu^\vee} T\left( \Oc \right) \right\}} \right)_{\mu^\vee \in X^\vee}$. When we identify $\mathds{1}_{\left\{ \varpi^{\mu^\vee} T\left( \Oc \right) \right\}}$ with the formal exponential $e^{\mu^\vee}$, we have a $\C$-algebra isomorphism $H\left( T, T\left( \Oc \right) \right) \approx \C\left[ X^\vee \right]$. For an unramified character $\chi: T \rightarrow \C^*$ and a function $f \in H\left( T, T\left( \Oc \right) \right)$, we write the convolution product:
\begin{align}
\label{eq:evaluation}
f\left( \chi \right) := \chi *_T f \left( \Id \right)
& = \int_T f\left( t^{-1} \right) \chi(t) dt \ .
\end{align}
Notice that, if the character $\chi$ is given by a $z \in X \otimes_\Z \C$ such that for all $\mu^\vee \in X^\vee$, $\chi\left( \varpi^{-\mu^\vee} \right) = e^{\langle z, \mu^\vee \rangle}$, we have $e^{\mu^\vee}\left( \chi \right) = e^{\langle z, \mu^\vee \rangle}$. Therefore, one can simply think of the notation in equation \eqref{eq:evaluation} as an evaluation.

The Satake transform $\Sc: H\left( G,K \right) \rightarrow H\left( T, T\left( \Oc \right) \right)$ is defined by an integral transform similar to those studied by Harish-Chandra, in the real case. For $\Hc \in H\left( G, K \right)$, $\Sc\left( \Hc \right)$ is given by:
\begin{align}
\label{eq:satake_def} 
\forall t \in T, 
   \Sc\left( \Hc \right)\left( t \right)
:= \delta(t)^{ \half} \int_N dn \ \Hc\left( t n \right)
 = \delta(t)^{-\half} \int_N dn \ \Hc\left( n t \right) \ .
\end{align}

In fact, the Satake transform takes its values in $\C\left[ X^\vee \right]^W$, the space of functions on $X^\vee$ invariant under $W$. An even more involved statement is:

\begin{theorem}[Satake \cite{Satake} - see also e.g \cite{Gross} Proposition 3.6 ]
The Satake transform is an algebra isomorphism between the spherical Hecke algebra $H(G,K)$ and $\C\left[ X^\vee \right]^W$.
\end{theorem}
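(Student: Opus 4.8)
The plan is to verify three things in turn — that $\Sc$ is well-defined and multiplicative, that its image lies in the $W$-invariants $\C\left[ X^\vee \right]^W$, and that the resulting map $H(G,K) \to \C\left[ X^\vee \right]^W$ is bijective — which together give the claimed algebra isomorphism. For the first point, that $\Sc\left( \Hc \right)$ lies in $H\left( T, T\left( \Oc \right) \right)$ is immediate: $T\left( \Oc \right)$-invariance comes from the left-$K$-invariance of $\Hc$, and compact support from the Cartan decomposition \eqref{eq:cartan_decomposition}, which bounds the support of $\Hc$ and hence of the $N$-integral in \eqref{eq:satake_def}. For multiplicativity I would write $\Sc\left( \Hc_1 *_G \Hc_2 \right)(t)$ as a double integral over $G$, integrate the inner variable over $N \times A$ modulo $T\left( \Oc \right)$ via the Iwasawa decomposition \eqref{eq:iwasawa_decomposition} (legitimate since the $\Hc_i$ are right-$K$-invariant), and then conjugate the $N$-variable by $t$; the Jacobian of $n \mapsto t n t^{-1}$ is a power of $\delta$, which is exactly why the two factors $\delta^{1/2}$ in \eqref{eq:satake_def} are placed so as to cancel and produce $\Sc\left( \Hc_1 \right) *_T \Sc\left( \Hc_2 \right)$, the convolution in $H\left( T, T\left( \Oc \right) \right) \cong \C\left[ X^\vee \right]$. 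This is the standard computation for which the normalization in \eqref{eq:satake_def} is tailored; it is bookkeeping but must be done with care.

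\textbf{$W$-invariance of the image.} This is the crux, and I would establish it through the unramified principal series. For $\chi$ in general position the $K$-fixed vectors of $I\left( \chi \right)$ form the line $\C \Phi_\chi$, on which $H(G,K)$ acts by a scalar; unwinding the definitions identifies this scalar with $\Sc\left( \Hc \right)\left( \chi \right)$ in the sense of \eqref{eq:evaluation}. On the other hand, for each $w \in W$ there is an intertwining operator $I\left( \chi \right) \to I\left( \chi^w \right)$ which, for generic $\chi$, is an isomorphism carrying $\Phi_\chi$ to a nonzero multiple of the spherical vector $\Phi_{\chi^w}$ — the proportionality constant being computed by a rank-one reduction and the Gindikin-Karpelevich formula of Theorem \ref{thm:GK_formula}. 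Since a Hecke operator commutes with intertwiners, the scalar is the same on both sides, so $\Sc\left( \Hc \right)\left( \chi \right) = \Sc\left( \Hc \right)\left( \chi^w \right)$ for generic $\chi$, hence identically in $\C\left[ X^\vee \right]$; translating $\chi \leftrightarrow z$ and $\chi^w \leftrightarrow wz$, this is precisely $W$-invariance. I expect this to be the main obstacle: it is the one point where genuinely $p$-adic harmonic analysis (the explicit rank-one computation behind the non-Archimedean $c$-function) cannot be bypassed, whereas everything else is formal. An alternative route, avoiding the principal series altogether, is to reduce $W$-invariance to the rank-one case directly on $\C\left[ X^\vee \right]$ and verify it for $SL_2$/$PGL_2$ by hand — but the essential computation is the same.

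\textbf{Bijectivity.} By the Cartan decomposition \eqref{eq:cartan_decomposition} the indicators $c_{\lambda^\vee} := \mathds{1}_{K \varpi^{-\lambda^\vee} K}$, with $\lambda^\vee$ ranging over $\left( X^\vee \right)^+$, form a $\C$-basis of $H(G,K)$, and the monomial symmetric functions $m_{\lambda^\vee} := \sum_{\mu^\vee \in W \lambda^\vee} e^{\mu^\vee}$, over the same index set, form a $\C$-basis of $\C\left[ X^\vee \right]^W$. By the $W$-invariance just proved, each $\Sc\left( c_{\lambda^\vee} \right)$ is a linear combination of the $m_{\mu^\vee}$; computing the $N$-integral of $c_{\lambda^\vee}$ with the help of the Iwasawa decomposition \eqref{eq:iwasawa_decomposition} — which records exactly which cosets $N \varpi^{-\mu^\vee} K$ meet $K \varpi^{-\lambda^\vee} K$ — shows that only $\mu^\vee \preceq \lambda^\vee$ for the dominance order occur, with a nonzero coefficient on $m_{\lambda^\vee}$. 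One needs only this unitriangular-up-to-scalars shape, not the exact entries. Hence the transition matrix from $\left( \Sc\left( c_{\lambda^\vee} \right) \right)_{\lambda^\vee}$ to $\left( m_{\lambda^\vee} \right)_{\lambda^\vee}$ is invertible, so $\Sc$ carries a basis of $H(G,K)$ to a basis of $\C\left[ X^\vee \right]^W$, and together with the multiplicativity from the first step, $\Sc$ is an algebra isomorphism.
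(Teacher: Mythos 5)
The paper never proves this theorem: it is imported verbatim from Satake and from Proposition 3.6 of \cite{Gross}, so there is no internal argument to measure your proposal against. Your sketch is the standard proof of the Satake isomorphism and is essentially correct: multiplicativity by the Iwasawa decomposition and the $\delta^{\half}$ normalization, $W$-invariance of the image via the action of $H(G,K)$ on the spherical line of $I\left( \chi \right)$ combined with intertwining operators whose effect on spherical vectors is governed by the Gindikin-Karpelevich formula (Theorem \ref{thm:GK_formula}), and bijectivity by triangularity of $\Sc\left( \mathds{1}_{\left\{ K \varpi^{-\lambda^\vee} K \right\}} \right)$ against the monomial symmetric functions. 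It is worth noting that pieces of your argument already appear in the paper for other purposes: the identification of the Hecke eigenvalue on $\Phi_\chi$ with $\Sc\left( \Hc \right)\left( \chi \right)$ is exactly \eqref{eq:convolution_eigeneq_sph} in Proposition \ref{proposition:eigenfunction_property}, and the coset-count expansion behind your triangularity is \eqref{eq:spherical_function} in Proposition \ref{proposition:value_S}, with the leading coefficient $q^{\langle \rho, \lambda^\vee \rangle}$ extracted in Corollary \ref{corollary:support}. If you were to write this out in full, two points deserve explicit care: the intertwiner argument yields $\Sc\left( \Hc \right)\left( \chi \right) = \Sc\left( \Hc \right)\left( \chi^w \right)$ only on the open set of Satake parameters where the intertwining integral converges and its proportionality constant is nonzero, so you should add the (easy) remark that a finite linear combination of exponentials $e^{\langle z, \mu^\vee \rangle}$ vanishing on a nonempty open set of $z$ vanishes identically; and since dominance is only a partial order on $\left( X^\vee \right)^+$, the ``invertible transition matrix'' conclusion should be run as an induction using the finiteness of $\left\{ \mu^\vee \textrm{ dominant}, \ \mu^\vee \leq \lambda^\vee \right\}$. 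Finally, the vanishing $N \varpi^{-\mu^\vee} K \cap K \varpi^{-\lambda^\vee} K = \emptyset$ unless $\mu^\vee \leq \lambda^\vee$ is a genuine geometric input (it is also used without proof in the paper), so it should be either cited or proved; none of this affects the soundness of your overall plan.
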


The importance of the Satake transform for us stems from:
\begin{proposition}
\label{proposition:eigenfunction_property}
Both the spherical vector $\Phi_\chi$ and the Whittaker function $\Wc_\chi$ are convolution eigenfunctions by elements in $\Hc \in H\left(G,K\right)$:
\begin{align}
\label{eq:convolution_eigeneq_sph}
\Phi_\chi *_G \Hc & = \Sc(\Hc)\left( \chi \right) \Phi_\chi \ ,\\
\label{eq:convolution_eigeneq}
\Wc_\chi  *_G \Hc & = \Sc(\Hc)\left( \chi \right) \Wc_\chi  \ .
\end{align}
\end{proposition}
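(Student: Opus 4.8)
The plan is to establish the eigenfunction property \eqref{eq:convolution_eigeneq_sph} for the spherical vector first, and then to deduce \eqref{eq:convolution_eigeneq} for the Whittaker function by commuting the right convolution with the defining Jacquet integral, using that the Satake transform is Weyl-invariant.

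For the spherical vector I would first observe that $\Hc \in H(G,K)$, acting by right convolution, preserves the line $\C\,\Phi_\chi = I(\chi)^K$ inside $I(\chi)$. Stability of $I(\chi)$ is immediate from the defining transformation law: for $b \in B$ one has $(\Phi_\chi *_G \Hc)(bg) = \int_G \Phi_\chi(bg h^{-1})\Hc(h)\,dh = \chi(b)\delta^{1/2}(b)\,(\Phi_\chi *_G \Hc)(g)$, since every element of $I(\chi)$ transforms this way under left translation by $B$. Right $K$-invariance of $\Phi_\chi *_G \Hc$ follows by reparametrising $h = u k$ in the convolution integral, so that $g k h^{-1} = g u^{-1}$ and $\Hc(h) = \Hc(u)$ by right $K$-invariance of $\Hc$. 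Since $\dim I(\chi)^K = 1$ — recalled right after \eqref{eq:def_modular} — there is a scalar $\omega_\chi(\Hc)$ with $\Phi_\chi *_G \Hc = \omega_\chi(\Hc)\,\Phi_\chi$, and evaluating at $\Id$ gives $\omega_\chi(\Hc) = (\Phi_\chi *_G \Hc)(\Id) = \int_G \Phi_\chi(h^{-1})\Hc(h)\,dh$.

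The remaining point — and the only delicate one — is to identify $\omega_\chi(\Hc)$ with $\Sc(\Hc)(\chi)$, which is a matter of bookkeeping the modular factors. I would substitute $h \mapsto h^{-1}$ (using that $G$ is unimodular), then use the Iwasawa decomposition $h = nak$ with the compatible Haar measure $dh = \delta(a)^{-1}\,dn\,da\,dk$ (normalised so that $\mathrm{vol}(K) = \mathrm{vol}(N(\Oc)) = 1$): then $\Phi_\chi(h) = \chi(a)\delta^{1/2}(a)$ and $\Hc(h^{-1}) = \Hc(a^{-1}n^{-1})$ by left $K$-invariance of $\Hc$. The change of $N$-variable $n \mapsto a^{-1}n^{-1}a$ produces a Jacobian factor $\delta(a)$, the $K$-integral is trivial, and one is left with $\omega_\chi(\Hc) = \int_A \delta^{1/2}(a)\,\chi(a)\,\bigl(\int_N \Hc(n a^{-1})\,dn\bigr)\,da$. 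Recognising the inner integral through the definition \eqref{eq:satake_def} of the Satake transform as $\delta(a)^{-1/2}\,\Sc(\Hc)(a^{-1})$ and cancelling the powers of $\delta$, this is $\int_A \chi(a)\,\Sc(\Hc)(a^{-1})\,da = \Sc(\Hc)(\chi)$ by the evaluation \eqref{eq:evaluation}. The main thing to get right here is the normalisation of the Iwasawa measure; everything else is formal.

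For the Whittaker function I would plug the definition \eqref{eq:whittaker_def} into $\Wc_\chi *_G \Hc(g) = \int_G \Wc_\chi(g h^{-1})\Hc(h)\,dh$ and exchange the $\int_N$ and $\int_G$ integrations — legitimate by Fubini, since $\Hc$ has compact support and the Jacquet integral is absolutely convergent for $\Re(z) \in \overset{\circ}{C}$ — to obtain $\int_N \varphi_N(n)^{-1}\,(\Phi_{\chi^{w_0}} *_G \Hc)(\bar{w}_0 n g)\,dn$. Now \eqref{eq:convolution_eigeneq_sph}, applied with the unramified character $\chi^{w_0}$, gives $\Phi_{\chi^{w_0}} *_G \Hc = \Sc(\Hc)(\chi^{w_0})\,\Phi_{\chi^{w_0}}$; and since $\Sc(\Hc) \in \C[X^\vee]^W$ is Weyl-invariant while $\chi^{w_0}$ has Satake parameter $w_0 z$, we have $\Sc(\Hc)(\chi^{w_0}) = \Sc(\Hc)(\chi)$. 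Pulling this scalar out of the $N$-integral recovers $\Sc(\Hc)(\chi)\,\Wc_\chi(g)$, which is \eqref{eq:convolution_eigeneq}.
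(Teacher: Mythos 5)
Your proposal is correct. For the Whittaker identity \eqref{eq:convolution_eigeneq} you follow exactly the paper's route: insert the Jacquet integral into the convolution, exchange the integrals by Fubini (justified, as you note, by the compact support of $\Hc$ and the absolute convergence of \eqref{eq:whittaker_def} for $\Re(z) \in \overset{\circ}{C}$), apply the spherical eigen-equation for $\chi^{w_0}$, and use the $W$-invariance of $\Sc(\Hc)$ to pass from $\Sc(\Hc)(\chi^{w_0})$ to $\Sc(\Hc)(\chi)$ --- a step the paper performs silently and which you rightly make explicit. For the spherical identity \eqref{eq:convolution_eigeneq_sph} your organization differs mildly from the paper's: you invoke the one-dimensionality of $I(\chi)^K$ (recalled after \eqref{eq:def_modular}) to get proportionality and then compute the eigenvalue by evaluating at $\Id$, whereas the paper does not use multiplicity one in this proof; instead it observes that both sides of each identity are right $K$-invariant and have the same behavior under left translation by $N$, reduces to $a \in A$, and computes $\Phi_\chi *_G \Hc(a)$ directly for every $a$. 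The computational core is identical in the two arguments --- Iwasawa decomposition of the Haar measure, the Jacobian $\delta(a)$ from conjugating the $N$-variable, the definition \eqref{eq:satake_def} and the evaluation pairing \eqref{eq:evaluation} --- and your bookkeeping of the modular factors is consistent with the paper's normalizations ($dh = \delta(a)^{-1}\,dn\,da\,dk$ for $h = nak$, with $K$ and $N(\Oc)$ of unit volume). What your multiplicity-one shortcut buys is that the scalar only needs to be identified at the identity; what the paper's version buys is independence from the uniqueness of the $K$-fixed vector, since it verifies the eigenrelation pointwise on $A$.
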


\begin{proof}
Here, we will use the Iwasawa decomposition $G = KAN$ (instead of $NAK$) along with the fact that the Haar measure decomposes as (see e.g \cite{Cartier} \textsection 4.1):
$$ dg = \delta(a) \ dk \ da \ dn \ .$$
We only need to prove both identities on $NA$ since the convolution of two right $K$ invariant functions is right $K$ invariant. Moreover, the terms on both side of an equality have the same behavior. In the first identity, they are left $N$ invariant. In the second identity, they have the twisting property of equation \eqref{eq:torsion} under the left action of $N$. This reduces the reasoning to proving that for $a \in A$:
\begin{align*}
\Phi_\chi *_G \Hc(a) 
 = \quad & \int_G \Phi_\chi(a h^{-1}) \Hc(h) dh\\
\stackrel{ \textrm{Iwasawa} }{=}
         & \int_A db \ \delta(b) \int_N dn \ \Phi_\chi\left(a \left(n b\right)^{-1} \right) \Hc(bn)\\
 = \quad & \int_A db \ \delta(b) \ \Phi_\chi(a b^{-1} ) \int_N dn \ \Hc(bn)\\
 = \quad & \Phi_\chi(a) \int_A db \ \delta(b) \Phi_\chi(b^{-1}) \int_N dn \ \Hc(bn)\\
 = \quad & \Phi_\chi(a) \int_A db \ \chi(b)^{-1} \delta\left( b\right)^{\half} \int_N dn \ \Hc(bn)\\
 = \quad & \Phi_\chi(a) \int_A db \ \chi(b)^{-1} \Sc\left( \Hc \right)(b)\\
 = \quad & \Sc(\Hc)\left( \chi \right) \Phi_\chi(a) \ .
\end{align*}
And:
\begin{align*}
\Wc_\chi *_G \Hc(a)
= \quad & \int_G \Wc_\chi(a h^{-1}) \Hc(h) dh \\
\stackrel{ \textrm{Eq } \eqref{eq:whittaker_def} }{=}
  & \int_N dn \int_G dh \Phi_{ \chi^{w_0} }( \bar{w}_0 n a h^{-1}) \Hc(h) \varphi_N(n)^{-1} \\
= \quad & \int_N dn \ \Phi_{ \chi^{w_0} } *_G \Hc( n a ) \varphi_N(n)^{-1} \\
\stackrel{ \textrm{Eq } \eqref{eq:convolution_eigeneq_sph} }{=}
  & \Sc(\Hc)\left( \chi^{w_0} \right) \int_N dn \ \Phi_{ \chi^{w_0} }( \bar{w}_0 n a ) \varphi_N(n)^{-1}\\
= \quad & \Sc(\Hc)\left( \chi \right) \Wc_\chi(a) \ .
\end{align*}
Interchanging integrals in the previous computations is allowed because the function $\Hc$ is compactly supported and the integral defining $\Wc_\chi$ is absolutely convergent.
\end{proof}

For a probabilist, being a convolution eigenfunction is very close to being harmonic for a certain random walk on $G$. To that endeavor, the eigenvalue would need to be one and the spherical Hecke algebra element must give rise to a probability measure. Therefore, the previous theorem is at the heart of our approach, once restated in a probabilistic fashion. 

In order to carry out our program, we will need the explicit computation of $\Sc\left( \Hc \right)$ for certain $\Hc$.

\subsection{On Macdonald's spherical function}
\label{subsection:macdonald}

Because of the Cartan decomposition (Eq. \eqref{eq:cartan_decomposition}), the indicator functions $\mathds{1}_{\left\{ K \varpi^{\lambda^\vee} K\right\}}$ of double $K$-cosets form a basis of $H\left( G, K \right)$. Therefore, the symmetric functions $\Sc\left( \lambda^\vee \right) := \Sc\left( \mathds{1}_{\left\{ K \varpi^{\lambda^\vee} K\right\}} \right)$ form a basis of $\C\left[ X^\vee \right]^W$. They coincide in fact with the Macdonald spherical functions (\cite{macdonald} Proposition 3.3.1). We record in the next proposition two well-known facts. The first one is that $\Sc\left( \lambda^\vee \right)$ encodes the cardinalities of $G/K$ cosets when intersecting strata from the Iwasawa and Cartan decompositions. The second one is that $\Sc\left( \lambda^\vee \right)$ is proportional to a character when $\lambda^\vee$ is minuscule.
 
If $A \subset G$ is a compact left $K$-invariant subset, then $A$ is a finite union of $G / K$ cosets. The number of such cosets is denoted by:
$$ \Card_{G/K}\left( A \right) = \int_A dg \ .$$

\begin{proposition}[Expressions for the spherical functions]
\label{proposition:value_S}
For every $\lambda^\vee$ dominant:
\begin{align}
\label{eq:spherical_function}
\Sc\left( \lambda^\vee \right) & = \sum_{\mu^\vee \in X^\vee} \Card_{G/K}\left( N \varpi^{-\mu^\vee} K \cap K \varpi^{-\lambda^\vee} K \right) q^{-\langle \mu^\vee, \rho \rangle} e^{\mu^\vee} \ ,
\end{align}
and if $\lambda^\vee$ is minuscule:
\begin{align}
\label{eq:spherical_function_minuscule}
\Sc\left( \lambda^\vee \right) = q^{\langle \rho, \lambda^\vee \rangle} \ch V\left( \lambda^\vee \right) \ .
\end{align}
\end{proposition}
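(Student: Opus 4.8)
The plan is to treat the two identities separately: \eqref{eq:spherical_function} is the definition of the Satake transform unwound and re-read as a coset count, while \eqref{eq:spherical_function_minuscule} follows from \eqref{eq:spherical_function} by a support argument together with the evaluation of a single coefficient.

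For \eqref{eq:spherical_function} I would start from \eqref{eq:satake_def} and the identification $H(T,T(\Oc))\cong\C[X^\vee]$ recalled above, under which $\Sc(\Hc)$ corresponds to $\sum_{\mu^\vee}\Sc(\Hc)(\varpi^{\mu^\vee})e^{\mu^\vee}$. Taking $\Hc=\mathds{1}_{\{K\varpi^{\lambda^\vee}K\}}$ and using $\delta(\varpi^{\mu^\vee})=q^{\langle 2\rho,\mu^\vee\rangle}$ from \eqref{eq:def_modular}, the coefficient of $e^{\mu^\vee}$ equals $q^{-\langle\rho,\mu^\vee\rangle}\int_N\mathds{1}_{\{K\varpi^{\lambda^\vee}K\}}(n\varpi^{\mu^\vee})\,dn$, so the only content is to identify the $N$-integral with the announced cardinality. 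For that I would rewrite $n\varpi^{\mu^\vee}\in K\varpi^{\lambda^\vee}K$ as $\varpi^{-\mu^\vee}n^{-1}\in K\varpi^{-\lambda^\vee}K$ by taking inverses (note $(K\varpi^{\lambda^\vee}K)^{-1}=K\varpi^{-\lambda^\vee}K$), then substitute $n\mapsto n^{-1}$ (the Haar measure on $N$ is inversion-invariant) to obtain $\int_N\mathds{1}_{\{K\varpi^{\lambda^\vee}K\}}(n\varpi^{\mu^\vee})\,dn=\mathrm{vol}_N\{n\in N:\varpi^{-\mu^\vee}n\in K\varpi^{-\lambda^\vee}K\}$. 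Since $T$ normalizes $N$, any such $\varpi^{-\mu^\vee}n$ lies in $N\varpi^{-\mu^\vee}$, hence in $N\varpi^{-\mu^\vee}K\cap K\varpi^{-\lambda^\vee}K$; the map $n\mapsto\varpi^{-\mu^\vee}nK$ sends this set onto the set of $G/K$-cosets contained in that intersection, and its fibres are precisely the left $N(\Oc)$-cosets of $N$, each of volume $1$ by the normalisation $\mathrm{vol}(N\cap K)=1$. Hence the volume equals the number of cosets, which is $\Card_{G/K}(N\varpi^{-\mu^\vee}K\cap K\varpi^{-\lambda^\vee}K)$, and \eqref{eq:spherical_function} follows.

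For \eqref{eq:spherical_function_minuscule} the first step is that $\Sc(\lambda^\vee)\in\C[X^\vee]^W$ (Satake's theorem) has non-negative coefficients, with support contained in the set of $\mu^\vee$ that are congruent to $\lambda^\vee$ modulo $Q^\vee$ and whose dominant representative is $\preceq\lambda^\vee$ for the dominance order: the congruence holds because the map $G\to X^\vee/Q^\vee$ (constant on $K$-double cosets and on left $N$-cosets) separates the strata, and the dominance bound is the classical fact that a non-empty intersection $N\varpi^{-\mu^\vee}K\cap K\varpi^{-\lambda^\vee}K$ forces such an inequality. When $\lambda^\vee$ is minuscule, the only dominant coweight in $\lambda^\vee+Q^\vee$ bounded above by $\lambda^\vee$ is $\lambda^\vee$ itself, so the support of $\Sc(\lambda^\vee)$ is contained in $W\lambda^\vee$, and it does contain $\lambda^\vee$ because $\varpi^{-\lambda^\vee}\in N\varpi^{-\lambda^\vee}K\cap K\varpi^{-\lambda^\vee}K$. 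As $\Sc(\lambda^\vee)$ is $W$-invariant, all its coefficients on $W\lambda^\vee$ coincide, so $\Sc(\lambda^\vee)=c\sum_{\mu^\vee\in W\lambda^\vee}e^{\mu^\vee}=c\,\ch V(\lambda^\vee)$ for a scalar $c$ (the last equality being the minuscularity of $\lambda^\vee$). Finally I would pin $c$ down by reading off from \eqref{eq:spherical_function} the coefficient of the lowest weight $e^{w_0\lambda^\vee}$: granting that the extremal stratum $N\varpi^{-w_0\lambda^\vee}K\cap K\varpi^{-\lambda^\vee}K$ is reduced to the single coset $\varpi^{-w_0\lambda^\vee}K$, that coefficient is $q^{-\langle\rho,w_0\lambda^\vee\rangle}=q^{\langle\rho,\lambda^\vee\rangle}$ (since $w_0\rho=-\rho$), whence $c=q^{\langle\rho,\lambda^\vee\rangle}$.

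The main obstacle is precisely the claim that the extremal Iwasawa stratum $N\varpi^{-w_0\lambda^\vee}K\cap K\varpi^{-\lambda^\vee}K$ is a single coset (and, secondarily, the dominance-support statement invoked above). I would prove it by the usual conjugation/lattice argument: for $n=\prod_{\alpha\in\Phi^+}x_{-\alpha}(u_\alpha)\in N$ one writes $n\varpi^{-w_0\lambda^\vee}=\varpi^{-w_0\lambda^\vee}(\varpi^{w_0\lambda^\vee}n\varpi^{-w_0\lambda^\vee})$ and uses that, $w_0\lambda^\vee$ being anti-dominant and $\lambda^\vee$ minuscule, $\langle\alpha,w_0\lambda^\vee\rangle\in\{-1,0\}$ for every $\alpha\in\Phi^+$; a short computation with lattices (elementary divisors for $G=GL_n$, Bruhat--Tits buildings in general) then shows that membership of $n\varpi^{-w_0\lambda^\vee}$ in $K\varpi^{-\lambda^\vee}K$ forces $n\in N(\Oc)$, so the stratum collapses to one coset. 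This step is elementary but is exactly where the minuscule hypothesis is genuinely used; alternatively, since $\Sc(\lambda^\vee)$ is Macdonald's spherical function, one may simply invoke \cite{macdonald}.
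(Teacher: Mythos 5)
Your proof is correct and is essentially the paper's: you obtain \eqref{eq:spherical_function} by unwinding \eqref{eq:satake_def} and identifying the $N$-integral with a coset count (the paper routes the bookkeeping through the Iwasawa decomposition of the Haar measure, while you count fibres of $n\mapsto\varpi^{-\mu^\vee}nK$ directly, which is the same computation), and \eqref{eq:spherical_function_minuscule} by $W$-invariance together with the two standard geometric facts (non-emptiness of $N\varpi^{-\mu^\vee}K\cap K\varpi^{-\lambda^\vee}K$ forces $\mu^\vee\leq\lambda^\vee$; the extremal stratum at $w_0\lambda^\vee$ is a single coset) and reading off the coefficient of $e^{w_0\lambda^\vee}$, exactly as in the paper's sketch, which likewise asserts those two facts without proof. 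One correction to your last paragraph: membership of $n\varpi^{-w_0\lambda^\vee}$ in $K\varpi^{-\lambda^\vee}K$ does \emph{not} force $n\in N\left(\Oc\right)$ --- e.g.\ $n=x_{-\alpha}\left(\varpi^{-1}\right)$ with $\langle\alpha,w_0\lambda^\vee\rangle=-1$ gives $n\varpi^{-w_0\lambda^\vee}\in\varpi^{-w_0\lambda^\vee}K\subset K\varpi^{-\lambda^\vee}K$ --- what the conjugation argument actually yields, and what suffices, is $\varpi^{w_0\lambda^\vee}n\varpi^{-w_0\lambda^\vee}\in N\left(\Oc\right)$, i.e.\ $n\varpi^{-w_0\lambda^\vee}K=\varpi^{-w_0\lambda^\vee}K$, so the stratum is still a single coset; alternatively, your fallback citation of \cite{macdonald} closes this step.
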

\begin{proof}
We will use the fact that, in the Iwasawa decomposition $G=NAK$, the Haar measure decomposes as (see again e.g \cite{Cartier} \textsection 4.1):
$$ dg = \delta^{-1}(a) \ dk \ da \ dn$$
In the identification of the indicator function $\mathds{1}_{\left\{ \varpi^{\mu^\vee} T\left( \Oc \right) \right\}}$ with the formal exponential $e^{\mu^\vee}$ for $\mu^\vee \in X^\vee$, we have:
\begin{align*}
\Sc\left( \lambda^\vee \right)
= \quad & \sum_{\mu^\vee \in X^\vee}\Sc\left( \mathds{1}_{\left\{ K \varpi^{\lambda^\vee} K\right\}}\right)\left( \varpi^{\mu^\vee} \right) e^{\mu^\vee}\\
\stackrel{ \textrm{Eq } \eqref{eq:satake_def} }{=}
        & \sum_{\mu^\vee \in X^\vee} \delta^{\half}\left( \varpi^{\mu^\vee} \right) e^{\mu^\vee} \int_N dn \mathds{1}_{\left\{K \varpi^{\lambda^\vee} K\right\}} \left( \varpi^{\mu^\vee} n \right)\\
= \quad & \sum_{\mu^\vee \in X^\vee} \delta^{\half}\left( \varpi^{\mu^\vee} \right) e^{\mu^\vee} \int_K dk \int_N dn \mathds{1}_{\left\{K \varpi^{\lambda^\vee} K\right\}} \left( k \varpi^{\mu^\vee} n \right)\\
= \quad & \sum_{\mu^\vee \in X^\vee} \delta^{-\half}\left( \varpi^{\mu^\vee} \right) e^{\mu^\vee} \delta^{-1}\left( \varpi^{-\mu^\vee} \right) \int_K dk \int_N dn \mathds{1}_{\left\{K \varpi^{-\lambda^\vee} K\right\}} \left( n \varpi^{-\mu^\vee} k \right)\\
\stackrel{ \textrm{Iwasawa} }{=}
        & \sum_{\mu^\vee \in X^\vee} q^{-\langle \rho, \mu^\vee \rangle} e^{\mu^\vee} \int_{N \varpi^{-\mu^\vee} K \cap K \varpi^{-\lambda^\vee} K} dg \\
= \quad & \sum_{\mu^\vee \in X^\vee} q^{-\langle \rho, \mu^\vee \rangle} \Card_{G/K}\left( N \varpi^{-\mu^\vee} K \cap K \varpi^{-\lambda^\vee} K \right) e^{\mu^\vee} \ .
\end{align*}

The second relation can be found as equation (3.13) in \cite{Gross}. A formal argument is sketched as follows. We already know thanks to the Satake isomorphism that $\Sc\left( \lambda^\vee \right)$ is $W$-invariant. As a consequence, by expressing the spherical function in the basis of $G^\vee$-characters, there are coefficients $a_{\lambda^\vee, \mu^\vee}$ such that:
$$ \Sc\left( \lambda^\vee \right) = \sum_{\mu^\vee \in \left( X^\vee \right)^+} a_{\lambda^\vee, \mu^\vee} \ch V\left( \mu^\vee \right) \ . $$
Let us write $\mu^\vee \leq \lambda^\vee$ for $\lambda^\vee - \mu^\vee \in \left( Q^\vee \right)^+$. Because $N \varpi^{-\mu^\vee} K \cap K \varpi^{-\lambda^\vee} K = \emptyset$ unless $\mu^\vee \leq \lambda^\vee$ and $N \varpi^{-w_0 \lambda^\vee} K \cap K \varpi^{-\lambda^\vee} K$ is only one coset, we have in fact:
$$ \Sc\left( \lambda^\vee \right) = q^{\langle \rho, \lambda^\vee \rangle} \ch V\left( \lambda^\vee \right) + \sum_{\mu^\vee < \lambda^\vee} a_{\lambda^\vee, \mu^\vee} \ch V\left( \mu^\vee \right) \ .$$
If $\lambda^\vee$ is minuscule, only the first term remains.
\end{proof}
In the case of $\lambda^\vee$ not minuscule, $\Sc\left( \lambda^\vee\right)$ becomes a linear combination of characters $\ch V\left( \mu^\vee \right)$ for $\mu^\vee \leq \Lambda^\vee$. Such a change of basis involves Kazhdan-Lusztig polynomials and goes beyond the scope of the present work. Before diving into the probabilistic part of this section, we need to state the following.

\begin{corollary}
\label{corollary:support}
If $\lambda^\vee$ is dominant and $\mu^\vee \in W \lambda^\vee$, then:
$$ N \varpi^{-\mu^\vee} K \cap K \varpi^{-\lambda^\vee} K = N\left( \Oc \right) \varpi^{-\mu^\vee} K \ .$$
\end{corollary}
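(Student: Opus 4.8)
The plan is to prove the two inclusions by a $G/K$-coset counting argument, using the two expressions for the spherical function $\Sc\left( \lambda^\vee \right)$ from Proposition \ref{proposition:value_S}. Throughout, note that both $N\left( \Oc \right)\varpi^{-\mu^\vee}K$ and $N\varpi^{-\mu^\vee}K \cap K\varpi^{-\lambda^\vee}K$ are right $K$-invariant; the first is a finite union of $G/K$-cosets because $N\left( \Oc \right)$ is compact, the second because it sits inside the compact set $K\varpi^{-\lambda^\vee}K$, which is itself a finite union of $G/K$-cosets by the Cartan decomposition \eqref{eq:cartan_decomposition}. The easy inclusion $N\left( \Oc \right)\varpi^{-\mu^\vee}K \subseteq N\varpi^{-\mu^\vee}K \cap K\varpi^{-\lambda^\vee}K$ is immediate: the containment in $N\varpi^{-\mu^\vee}K$ is trivial, and for the other one I would write $\mu^\vee = w\lambda^\vee$, pick a lift $\dot w \in K$ of $w$ normalizing ${\bf T}$ (which exists since ${\bf G}$ is split and $K = {\bf G}\left( \Oc \right)$), and use $\varpi^{-\mu^\vee} = \dot w\,\varpi^{-\lambda^\vee}\,\dot w^{-1}$ together with $N\left( \Oc \right), \dot w, \dot w^{-1} \in K$. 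Hence it remains only to check that the two sets contain the \emph{same number} of $G/K$-cosets.

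For the right-hand side I would extract the coefficient of $e^{\mu^\vee}$ from the two expressions for $\Sc\left( \lambda^\vee \right)$ obtained in the proof of Proposition \ref{proposition:value_S}:
\begin{align*}
\sum_{\nu^\vee} q^{-\langle \nu^\vee, \rho \rangle}
   \Card_{G/K}\left( N \varpi^{-\nu^\vee} K \cap K \varpi^{-\lambda^\vee} K \right) e^{\nu^\vee}
& = \Sc\left( \lambda^\vee \right) \\
& = q^{\langle \rho, \lambda^\vee \rangle} \ch V\left( \lambda^\vee \right)
    + \sum_{\nu^\vee < \lambda^\vee} a_{\lambda^\vee, \nu^\vee} \ch V\left( \nu^\vee \right) \ .
\end{align*}
Since $\mu^\vee \in W\lambda^\vee$ is an extreme weight, it occurs in $\ch V\left( \lambda^\vee \right)$ with multiplicity one and does not occur in any $\ch V\left( \nu^\vee \right)$ with $\nu^\vee < \lambda^\vee$ dominant (for such $\nu^\vee$ one has $\|\nu^\vee\| < \|\lambda^\vee\| = \|\mu^\vee\|$, so $\mu^\vee$ is strictly longer than every weight of $V\left( \nu^\vee \right)$). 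Comparing coefficients of $e^{\mu^\vee}$ therefore gives $\Card_{G/K}\left( N\varpi^{-\mu^\vee}K \cap K\varpi^{-\lambda^\vee}K \right) = q^{\langle \rho, \lambda^\vee + \mu^\vee \rangle}$.

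For the left-hand side, $\Card_{G/K}\left( N\left( \Oc \right)\varpi^{-\mu^\vee}K \right)$ is the index $\left[ N\left( \Oc \right) : N\left( \Oc \right) \cap \varpi^{-\mu^\vee}K\varpi^{\mu^\vee} \right]$, the stabilizer being $\{ n \in N\left( \Oc \right) : \varpi^{\mu^\vee} n \varpi^{-\mu^\vee} \in N \cap K = N\left( \Oc \right) \}$. Writing an element of $N\left( \Oc \right)$ in the product decomposition over the negative root subgroups $x_\gamma\left( \Oc \right)$ afforded by the pinning, and using that conjugation by $\varpi^{\mu^\vee}$ scales the $\gamma$-coordinate by $\varpi^{\langle \gamma, \mu^\vee \rangle}$, one finds $N\left( \Oc \right) \cap \varpi^{-\mu^\vee}K\varpi^{\mu^\vee} = \prod_\gamma x_\gamma\left( \varpi^{\max\left( 0,\, -\langle \gamma, \mu^\vee \rangle \right)}\Oc \right)$, so the index equals $q^{E}$ with $E = \sum_{\gamma \in \Phi^+} \max\left( 0, \langle \gamma, \mu^\vee \rangle \right)$. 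Finally $2E = \sum_{\gamma \in \Phi^+}\langle \gamma, \mu^\vee \rangle + \sum_{\gamma \in \Phi^+}|\langle \gamma, \mu^\vee \rangle| = 2\langle \rho, \mu^\vee \rangle + 2\langle \rho, \lambda^\vee \rangle$, where the last equality uses that $\sum_{\gamma \in \Phi^+}|\langle \gamma, \cdot \rangle|$ is $W$-invariant, hence equal to its value $\sum_{\gamma \in \Phi^+}\langle \gamma, \lambda^\vee \rangle = \langle 2\rho, \lambda^\vee \rangle$ at the dominant representative $\lambda^\vee$. Thus $E = \langle \rho, \lambda^\vee + \mu^\vee \rangle$ as well.

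Having shown that $N\left( \Oc \right)\varpi^{-\mu^\vee}K \subseteq N\varpi^{-\mu^\vee}K \cap K\varpi^{-\lambda^\vee}K$ and that both are right $K$-invariant unions of the same finite number $q^{\langle \rho, \lambda^\vee + \mu^\vee \rangle}$ of $G/K$-cosets, they must coincide, which is the claim. The only genuinely delicate point is the computation of the left-hand count: it relies on the standard structure theory of Chevalley groups over $\Oc$ — namely that $N\left( \Oc \right)$ and the subgroup $\varpi^{\mu^\vee}N\left( \Oc \right)\varpi^{-\mu^\vee}$ both split as ordered products over the negative root subgroups compatibly with valuations of coordinates — attached to the pinning fixed in the preliminaries. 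Everything else is either formal or already recorded in the excerpt.
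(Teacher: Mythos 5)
Your proposal is correct and follows essentially the same strategy as the paper: establish the easy inclusion and then match the number of $G/K$-cosets, obtaining the count of $N \varpi^{-\mu^\vee} K \cap K \varpi^{-\lambda^\vee} K$ from the spherical function $\Sc\left( \lambda^\vee \right)$ and the count of $N\left( \Oc \right) \varpi^{-\mu^\vee} K$ from the root-subgroup coordinates of $N\left( \Oc \right)$. The only differences are cosmetic: you extract the coefficient of $e^{\mu^\vee}$ from the character expansion (using multiplicity one of extreme weights) and evaluate the exponent via $\max(0,x)=\frac{x+|x|}{2}$ together with the $W$-invariance of $\sum_{\gamma \in \Phi^+} |\langle \gamma, \cdot \rangle|$, whereas the paper uses the $W$-invariance of the expression \eqref{eq:spherical_function} directly and the inversion-set identity $\sum_{\beta \in \Inv(v)} \beta = \rho - v\rho$, both routes yielding the same count $q^{\langle \rho, \lambda^\vee + \mu^\vee \rangle}$.
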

\begin{proof}
The fact that the right-hand side is included in the left-hand side is immediate. In order to prove the reverse inclusion, it suffices that both sets have the same cardinalities as $G/K$ cosets. On the one hand, thanks to the $W$-invariance in equation \eqref{eq:spherical_function} and the fact that $N \varpi^{-w_0 \lambda^\vee} K \cap K \varpi^{-\lambda^\vee} K $ is only one coset 
we have that:
\begin{align*}
  & \Card_{G/K}\left( N \varpi^{-\mu^\vee} K \cap K \varpi^{-\lambda^\vee} K \right)\\
= & q^{-\langle w_0\lambda^\vee - \mu^\vee, \rho \rangle} \Card_{G/K}\left( N \varpi^{-w_0 \lambda^\vee} K \cap K \varpi^{-\lambda^\vee} K \right)\\
= & q^{\langle \lambda^\vee - w_0 \mu^\vee, \rho \rangle} \ .
\end{align*}
On the other hand, if $n \in N\left( \Oc \right)$ then (\cite{Steinberg} lemmas 17 and 18), for any fixed order on the positive roots, there are unique parameters $\left( o_\beta \right)_{\beta \in \Phi^+}$ in $\Oc$ such that
$$ n = \prod_{\beta \in \Phi^+} x_{-\beta}\left( o_\beta \right).$$
By choosing an order such that:
$$ n = \prod_{ \substack{\beta \in \Phi^+ \\ \langle \beta, \mu^\vee \rangle \geq 0} } x_{-\beta}\left( o_\beta \right)
       \prod_{ \substack{\beta \in \Phi^+ \\ \langle \beta, \mu^\vee \rangle < 0   } } x_{-\beta}\left( o_\beta \right), $$
we obtain:
\begin{align*}
n \varpi^{-\mu^\vee} K & = \varpi^{-\mu^\vee} \varpi^{\mu^\vee} n \varpi^{-\mu^\vee} K \\
& = \varpi^{-\mu^\vee}
    \prod_{ \substack{\beta \in \Phi^+ \\ \langle \beta, \mu^\vee \rangle \geq 0} } x_{-\beta}\left( \varpi^{-\langle \beta, \mu^\vee \rangle} o_\beta \right)
    \prod_{ \substack{\beta \in \Phi^+ \\ \langle \beta, \mu^\vee \rangle < 0   } } x_{-\beta}\left( \varpi^{-\langle \beta, \mu^\vee \rangle} o_\beta \right) 
    K \\
& = \varpi^{-\mu^\vee}
    \prod_{ \substack{\beta \in \Phi^+ \\ \langle \beta, \mu^\vee \rangle \geq 0} } x_{-\beta}\left( \varpi^{-\langle \beta, \mu^\vee \rangle} o_\beta \right)
    K \ .
\end{align*}
Here, we write $\mu^\vee = w \lambda^\vee$. As the parameters $\left( \varpi^{-\langle \beta, \mu^\vee \rangle} o_\beta \mod \Oc \right)_{\beta \in \Phi^+}$ uniquely determine the coset $n \varpi^{-\mu^\vee} K$, we see that 
$$\Card_{G/K}\left( N \varpi^{-\mu^\vee} K \cap K \varpi^{-\lambda^\vee} K \right) = q^a , $$
with $a$ being the integer
$$
a = \sum_{ \substack{\beta \in \Phi^+ \\ \langle \beta, \mu^\vee \rangle \geq 0} } \langle \beta, \mu^\vee \rangle
  = \sum_{ \substack{\beta \in w^{-1} \Phi^+ \\ \langle \beta, \lambda^\vee \rangle \geq 0} } \langle \beta, \lambda^\vee \rangle
  = \sum_{ \substack{\beta \in w^{-1} w_0 \Phi^- \\ \langle \beta, \lambda^\vee \rangle \geq 0} } \langle \beta, \lambda^\vee \rangle \ .
$$
Since $\langle \beta, \lambda^\vee \rangle$ is either zero or negative when $\beta$ is a negative root, we can restrict the previous summation index to $\beta \in \Phi^+$ and discard the condition $\langle \beta, \lambda^\vee \rangle \geq 0$. Hence a summation over the inversion set $\Inv(w^{-1} w_0) := \Phi^+ \cap w^{-1} w_0 \Phi^{-}$. Using the fact that (consequence of corollary 1.3.22 in \cite{Kumar02})
$$ \forall v \in W, \sum_{\beta \in \Inv(v)} \beta = \rho - v \rho \ ,$$
we obtain:
$$ 
a = \left\langle \sum_{\beta \in \Phi^+ \cap w^{-1} w_0\Phi^-} \beta, \lambda^\vee \right\rangle 
  = \langle \rho - w^{-1} w_0 \rho, \lambda^\vee \rangle
  = \langle \rho, \lambda^\vee - w_0 \mu^\vee \rangle \ .
$$
\end{proof}

\subsection{A spherical and a solvable random walk}
The elements of the spherical Hecke algebra can be seen as bi-invariant probability measures. This allows the construction of random walks on the group, for which the Whittaker functions should be harmonic. More precisely, we will work on a slightly different level, on the solvable group $B$. Because of the Iwasawa ($NAK$) decomposition, one identifies $G/K$ to $B\left( \Kc \right) / B\left( \Oc \right)$. Therefore harmonic properties for functions in $\Fc\left( G/K \right)$ have counterparts for functions in $\Fc\left( B \right)$. For every discrete time Markov process $X$, we will denote by $\Fc^X = \left( \Fc^X_t \right)_{t \in \N}$ the natural filtration generated by $X$.

Let $\lambda^\vee$ be a dominant cocharacter. Define $\left( G_t; t \geq 0 \right)$ as the left invariant random walk on $G$ with \iid~increments distributed as $\Uc\left( K \right) \varpi^{-\lambda^\vee} \Uc\left( K \right)$:
$$ G_t := g_1 g_2 \dots g_t \quad \textrm{ and } \quad \forall s \in \N, \ g_s \eqlaw \Uc\left( K \right) \varpi^{-\lambda^\vee} \Uc\left( K \right) \ .$$
The previous product has to be understood as a product of independent random variables. As a general convention, the appearance of several $\Uc\left( H \right)$ in the same expression will always refer to \emph{different} independent Haar distributed random variables on $H$. Also, we will call $\left( G_t; t \geq 0 \right)$ the spherical random walk since its increments are $K$ bi-invariant. The following proposition summarizes its properties.

\begin{proposition}
\label{proposition:G_harmonicity}
The law of the random variable $\Uc\left( K \right) \varpi^{-\lambda^\vee} \Uc\left( K \right)$ is given as follows. For every $f \in \Fc(G)$:
$$ \E\left[ f\left( \Uc\left( K \right) \varpi^{-\lambda^\vee} \Uc\left( K \right) \right) \right]
 = \frac{1}{\Card_{G/K}\left( K \varpi^{-\lambda^\vee} K \right)} \int_G f\left( g \right) \mathds{1}_{ \{ g \in K \varpi^{-\lambda^\vee} K \}} dg \ ,$$
and the fact that the Whittaker function is a convolution eigenfunction can be restated as an $\alpha$-harmonicity property:
$$ \E\left( \Wc_\chi\left( G_{t+1} \right) \ | \ \Fc^G_t, G_t = g \right)
 = \frac{ \Sc\left( \lambda^\vee \right)(\chi) }
        { \Card_{G/K}\left( K \varpi^{-\lambda^\vee} K \right) } \Wc_\chi(g) \ .
$$
\end{proposition}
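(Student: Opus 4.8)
The plan is to split the statement into two independent parts: first identify the law of the single increment $\Uc(K)\varpi^{-\lambda^\vee}\Uc(K)$, and then combine it with the convolution eigenfunction property of Proposition \ref{proposition:eigenfunction_property}.

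\emph{The law of the increment.} Let $\mu$ be the distribution of $k_1\varpi^{-\lambda^\vee}k_2$ with $k_1,k_2$ independent and Haar distributed on $K$; it is a Borel probability measure supported on the compact double coset $C:=K\varpi^{-\lambda^\vee}K$. Left and right invariance of Haar measure on $K$ show at once that $\mu$ is bi-$K$-invariant, i.e.\ invariant under $x\mapsto kxk'$ for all $k,k'\in K$. Since $G$ is unimodular and $C$ is bi-$K$-invariant, the normalised restriction of Haar measure to $C$, namely $\nu(dg)=\Card_{G/K}(C)^{-1}\mathds{1}_{\{g\in C\}}\,dg$, is also a bi-$K$-invariant probability measure on $C$; here I use that $\int_C dg=\Card_{G/K}(C)$, each coset $gK$ having unit volume since $K$ does. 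Now $C$ is a single orbit of the compact group $K\times K$ acting by $(k_1,k_2)\cdot x=k_1xk_2^{-1}$, hence a compact homogeneous space, which carries a unique invariant probability measure. Therefore $\mu=\nu$, which is exactly the asserted formula for $\E[f(\Uc(K)\varpi^{-\lambda^\vee}\Uc(K))]$, $f\in\Fc(G)$.

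\emph{Harmonicity.} As $\left(G_t\right)$ is a left-invariant random walk, its increment $g_{t+1}\eqlaw\Uc(K)\varpi^{-\lambda^\vee}\Uc(K)$ is independent of $\Fc^G_t$, so on $\{G_t=g\}$ we have $\E[\Wc_\chi(G_{t+1})\mid\Fc^G_t]=\E[\Wc_\chi(g\cdot\Uc(K)\varpi^{-\lambda^\vee}\Uc(K))]$. Applying the first part with the test function $x\mapsto\Wc_\chi(gx)$ rewrites this as $\Card_{G/K}(C)^{-1}\int_G\Wc_\chi(gx)\mathds{1}_{\{x\in C\}}\,dx$. The only delicate point is the orientation of the convolution: substituting $x\mapsto x^{-1}$ (legitimate by unimodularity) and noting that $C^{-1}=K\varpi^{\lambda^\vee}K$ turns this integral into $\bigl(\Wc_\chi *_G \mathds{1}_{\{K\varpi^{\lambda^\vee}K\}}\bigr)(g)$. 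By \eqref{eq:convolution_eigeneq} this equals $\Sc\bigl(\mathds{1}_{\{K\varpi^{\lambda^\vee}K\}}\bigr)(\chi)\,\Wc_\chi(g)=\Sc(\lambda^\vee)(\chi)\,\Wc_\chi(g)$, by the very definition $\Sc(\lambda^\vee):=\Sc(\mathds{1}_{\{K\varpi^{\lambda^\vee}K\}})$. Dividing by $\Card_{G/K}(C)$ produces the claimed $\alpha$-harmonicity.

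The proof is essentially bookkeeping once Proposition \ref{proposition:eigenfunction_property} is available; the one genuine ingredient is the first part, where the crux is the uniqueness of the bi-$K$-invariant probability measure on a double coset (equivalently, transitivity of the $K\times K$-action on the compact set $C$), together with the normalisation $\int_C dg=\Card_{G/K}(C)$. No convergence issue arises: every integral is taken against a compactly supported function or over a compact set, and for the real Satake parameters $z\in\overset{\circ}{C}$ we work with the Jacquet integral for $\Wc_\chi$ converges absolutely, so interchanging integrals is justified exactly as in the proof of Proposition \ref{proposition:eigenfunction_property}.
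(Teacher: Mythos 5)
Your proposal is correct and follows essentially the same route as the paper: the $\alpha$-harmonicity part is the identical computation (independence of the increment, inversion of the variable using unimodularity so that $C^{-1}=K\varpi^{\lambda^\vee}K$, then equation \eqref{eq:convolution_eigeneq}), and your identification of the increment law via the unique bi-$K$-invariant probability measure on the $K\times K$-orbit $C$ is just a mild rephrasing of the paper's first argument, which pins down the Radon--Nikodym density by bi-$K$-invariance and support. No gaps.
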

\begin{proof}
Let us give two proofs of the first statement. Clearly the law of our random variable is absolutely continuous with respect to the Haar measure on $G$: because $K$ is a compact open subgroup, the Haar measure on $K$ is nothing but $\mathds{1}_{\{g \in K\}} dg$. Thus, the expectation $\E\left[ f\left( \Uc\left( K \right) \varpi^{-\lambda^\vee} \Uc\left( K \right) \right) \right]$ can be written as an integral against $dg$. The Radon-Nikodym derivative has to be a $K$ bi-invariant function, and hence proportional to $\mathds{1}_{ \{ g \in K \varpi^{-\lambda^\vee} K \}}$. The proportionality constant is given by the volume of $K \varpi^{-\lambda^\vee} K$ or equivalently the number of right $K$ cosets it contains. Hence a first proof.

The second proof consists in checking the formula by doing the computation backwards. Using the left and right invariance of the Haar measure, we obtain:
\begin{align*}
  \quad & \int_G f\left( g \right) \mathds{1}_{ \{ g \in K \varpi^{-\lambda^\vee} K \}} dg\\
= \quad & \int_K dk_1 \ \int_K dk_2 \int_G f\left( k_1 g k_2\right) \mathds{1}_{ \{ g \in K \varpi^{-\lambda^\vee} K \}} dg \\
\stackrel{\textrm{Fubini}}{=} 
        & \int_G dg \ \mathds{1}_{ \{ g \in K \varpi^{-\lambda^\vee} K \}} \int_K dk_1 \ \int_K dk_2  f\left( k_1 g k_2\right)\\
= \quad & \int_G dg \ \mathds{1}_{ \{ g \in K \varpi^{-\lambda^\vee} K \}} \int_K dk_1 \ \int_K dk_2  f\left( k_1 \varpi^{-\lambda^\vee} k_2\right)\\
= \quad & \Card_{G/K}\left( K \varpi^{-\lambda^\vee} K \right) \int_K dk_1 \ \int_K dk_2  f\left( k_1 \varpi^{-\lambda^\vee} k_2\right)\\
= \quad & \Card_{G/K}\left( K \varpi^{-\lambda^\vee} K \right) \E\left( f\left( \Uc\left( K \right) \varpi^{-\lambda^\vee} \Uc\left( K \right) \right) \right) \ .
\end{align*}

For the second statement, $\Fc^G$ being the natural filtration of the random walk $\left( G_t; t \in \N \right)$. We have:
\begin{align*}
  \quad & \E\left[ \Wc_\chi\left( G_{t+1} \right) \ | \ \Fc^G_t, G_t = g \right]\\
= \quad & \E\left[ \Wc_\chi\left( g g_{t+1} \right) \right]\\
= \quad & \frac{1}{\Card_{G/K}\left( K \varpi^{-\lambda^\vee} K \right)} \int_G \Wc_\chi(g h ) \mathds{1}_{ \{ h \in K \varpi^{-\lambda^\vee} K \}} dh\\
\stackrel{ G \textrm{ unimodular} }{=}
        & \frac{1}{\Card_{G/K}\left( K \varpi^{-\lambda^\vee} K \right)} \int_G \Wc_\chi(g h^{-1} ) \mathds{1}_{ \{ h \in K \varpi^{\lambda^\vee} K \}} dh\\
= \quad & \frac{1}{\Card_{G/K}\left( K \varpi^{-\lambda^\vee} K \right)} \left( \Wc_\chi *_G  \mathds{1}_{ \{ K \varpi^{\lambda^\vee} K \}} \right)(g)\\
\stackrel{ \textrm{Eq } \eqref{eq:convolution_eigeneq} }{=}
        & \frac{ \Sc\left( \lambda^\vee \right)\left( \chi \right) }
               { \Card_{G/K}\left( K \varpi^{-\lambda^\vee} K \right) } \Wc_\chi(g) \ .
\end{align*}
\end{proof}

Now, let us consider a random walk $\left( B_t; t \in \N \right)$ on the solvable group $B$ which will inherit the $\alpha$-harmonicity property of $\left( G_t; t \in \N \right)$ given in Proposition \ref{proposition:G_harmonicity}. When considering the Iwasawa decomposition of the random walk $\left( G_t; t \in \N \right)$, we write:
$$ G_t := B_t K_t ,$$
but since $B_t$ is defined only modulo $B\left( \Oc \right)$, a choice has to be made. Nevertheless, this choice happens only in a realization of our random walk. In term of distribution, it is natural to consider increments for $B_t$ that are right $B\left( \Oc \right)$ invariant. Therefore we choose $\left( B_t; t \in \N \right)$ to be a left-invariant random with independent increments
$$ B_t := b_1 b_2 \dots b_t$$
with the distribution of each $b_s$ being right $B\left( \Oc \right)$ invariant. Hence we are left with the task of specifying only the distribution of $b_s \cdot B\left( \Oc \right)$ on $B / B\left( \Oc \right) \approx G/K$. This is given by $b_s K \eqlaw g_s K$. Therefore, the increments $\left( b_t \ ; t \in \N \right)$ are $B\left( \Oc \right)$ bi-invariant. 

We easily see that $\Wc_\chi$ inherits the $\alpha$-harmonicity property with respect to $\left( B_t; t \in \N \right)$ from the corresponding property with respect to $\left( G_t; t \in \N \right)$:
\begin{align}
\label{eq:B_harmonicity}
\E\left( \Wc_\chi\left( B_{t+1} \right) \ | \ \Fc^B_t, B_t = b \right)
   = \frac{ \Sc\left( \lambda^\vee \right)(\chi) }
        { \Card_{G/K}\left( K \varpi^{-\lambda^\vee} K \right) } \Wc_\chi\left( b \right) \ .
\end{align}
Indeed, by the tower rule of conditional expectation:
\begin{align*}
\E\left( \Wc_\chi\left( B_{t+1} \right) \ | \ \Fc^B_t, B_t = b \right) & = \E\left( \Wc_\chi\left( G_{t+1} \right) \ | \ \Fc^B_t, B_t = b \right)\\
& = \E\left( \E\left( \Wc_\chi\left( G_{t+1} \right) \ | \ \Fc^G_t \right) \ | \ \Fc^B_t, B_t = b \right)\\
& = \frac{ \Sc\left( \lambda^\vee \right)(\chi) }
        { \Card_{G/K}\left( K \varpi^{-\lambda^\vee} K \right) } \Wc_\chi\left( b \right) \ .
\end{align*}

The law of the increments $\left( B_t; t \in \N \right)$ is only explicit in the case of a spherical random walk with a choice of dominant cocharacter $\lambda^\vee$ that is minuscule. From now on, we will use the letter $\Lambda^\vee$ to refer to the minuscule cocharacter we fixed at the beginning.
\begin{proposition}
\label{proposition:B_increments}
If the increments of $\left( G_t; t \in \N \right)$, $\left( g_1, g_2, \dots \right)$ are distributed as $\Uc\left( K \right) \varpi^{-\Lambda^\vee} \Uc\left( K \right)$, with $\Lambda^\vee$ minuscule, then the increments of $\left( B_t; t \in \N \right)$ are also independent and identically distributed with:
\begin{align}
\label{eq:B_increment_law}
B_t^{-1} B_{t+1} \eqlaw \Uc\left( B\left( \Oc \right) \right) \varpi^{-\mu^\vee_\rho} \Uc\left( B\left( \Oc \right) \right)
\end{align}
with the above product being a product of three independent random variables and $\mu^\vee_\rho$ being distributed according to
\begin{align}
\label{eq:mu_rho}
    \forall \mu^\vee \in W \Lambda^\vee, \P\left( \mu^\vee_\rho = \mu^\vee \right) 
& = \frac{ q^{\langle \Lambda^\vee - w_0 \mu^\vee, \rho \rangle} }
         { \Card_{G/K}\left( K \varpi^{-\Lambda^\vee} K \right) } \ .
\end{align}
\end{proposition}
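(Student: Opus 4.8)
The increments $b_t := B_t^{-1} B_{t+1}$ are i.i.d.\ by construction, so it remains to identify their common law. By the way $\left( B_t ; t \in \N \right)$ was built, $b_t$ is $B\left(\Oc\right)$-bi-invariant and its image $b_t K$ in $B/B\left(\Oc\right) \approx G/K$ has the law of $g_t K$; by (the proof of) Proposition \ref{proposition:G_harmonicity} the latter is the uniform law on the finite set $\Sigma := \left( K \varpi^{-\Lambda^\vee} K \right)/K$ of $G/K$-cosets contained in $K \varpi^{-\Lambda^\vee} K$. A $B\left(\Oc\right)$-bi-invariant probability law on $B$ is determined by its image in $B/B\left(\Oc\right) \approx G/K$ (two distinct $B\left(\Oc\right)$-double cosets are each a union of cosets $b\,B\left(\Oc\right)$, hence have disjoint images there), and the law in \eqref{eq:B_increment_law} is plainly $B\left(\Oc\right)$-bi-invariant, so the proposition amounts to checking that the random $G/K$-coset $\Uc\left(B\left(\Oc\right)\right) \varpi^{-\mu^\vee_\rho} \Uc\left(B\left(\Oc\right)\right) K$ is uniformly distributed on $\Sigma$, with $\mu^\vee_\rho$ distributed by \eqref{eq:mu_rho}, independently of the two Haar factors.

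The first step is to simplify this coset. Writing $B\left(\Oc\right) = N\left(\Oc\right) T\left(\Oc\right)$ and using that $T$ is abelian while $T\left(\Oc\right), B\left(\Oc\right) \subset K$, one gets for each fixed $\mu^\vee$ the identity of random $G/K$-cosets
$$ \Uc\left(B\left(\Oc\right)\right) \varpi^{-\mu^\vee} \Uc\left(B\left(\Oc\right)\right) K \;=\; \Uc\left(N\left(\Oc\right)\right) \varpi^{-\mu^\vee} K \ . $$
The second step is to partition $\Sigma$ using the Iwasawa decomposition \eqref{eq:iwasawa_decomposition}: the strata $N \varpi^{-\mu^\vee} K$, $\mu^\vee \in X^\vee$, are disjoint, so $\Sigma$ is the disjoint union of the pieces $\Sigma_{\mu^\vee} := \left( N \varpi^{-\mu^\vee} K \cap K \varpi^{-\Lambda^\vee} K \right)/K$. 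Here the minuscule hypothesis is exactly what is needed: comparing \eqref{eq:spherical_function} with \eqref{eq:spherical_function_minuscule} (recall $\ch V\left(\Lambda^\vee\right) = \sum_{\mu^\vee \in W\Lambda^\vee} e^{\mu^\vee}$) forces $\Sigma_{\mu^\vee} = \emptyset$ unless $\mu^\vee \in W\Lambda^\vee$, while for $\mu^\vee \in W\Lambda^\vee$ Corollary \ref{corollary:support} identifies $\Sigma_{\mu^\vee}$ with the set of $G/K$-cosets in $N\left(\Oc\right) \varpi^{-\mu^\vee} K$, of cardinality $q^{\langle \Lambda^\vee - w_0 \mu^\vee, \rho \rangle}$ --- precisely the exponent weighting \eqref{eq:mu_rho}. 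Thus $\Sigma = \bigsqcup_{\mu^\vee \in W\Lambda^\vee} \Sigma_{\mu^\vee}$.

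It then remains to control the conditional law: conditionally on $\mu^\vee_\rho = \mu^\vee$, the coset $\Uc\left(N\left(\Oc\right)\right) \varpi^{-\mu^\vee} K$ is uniform on $\Sigma_{\mu^\vee}$. This is a soft symmetry argument: $N\left(\Oc\right)$ acts transitively on $\Sigma_{\mu^\vee}$ by left translation, via $m \cdot n\varpi^{-\mu^\vee}K = mn\varpi^{-\mu^\vee}K$; the orbit map $n \mapsto n\varpi^{-\mu^\vee}K$ is $N\left(\Oc\right)$-equivariant and surjective onto $\Sigma_{\mu^\vee}$; and the push-forward of the Haar probability $\Uc\left(N\left(\Oc\right)\right)$ is an $N\left(\Oc\right)$-invariant probability on a transitive $N\left(\Oc\right)$-set, hence the uniform one. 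Putting the pieces together, for $gK \in \Sigma$ lying in the (by disjointness, unique) piece labelled by $\mu^\vee(g) \in W\Lambda^\vee$, only the term $\mu^\vee_\rho = \mu^\vee(g)$ contributes and
\begin{align*}
\P\left( \Uc\left(N\left(\Oc\right)\right) \varpi^{-\mu^\vee_\rho} K = gK \right)
& = \P\left( \mu^\vee_\rho = \mu^\vee(g) \right) \cdot \frac{1}{\# \Sigma_{\mu^\vee(g)}} \\
& = \frac{ q^{\langle \Lambda^\vee - w_0 \mu^\vee(g), \rho \rangle} }{ \Card_{G/K}\left( K \varpi^{-\Lambda^\vee} K \right) } \cdot q^{-\langle \Lambda^\vee - w_0 \mu^\vee(g), \rho \rangle}
= \frac{1}{ \Card_{G/K}\left( K \varpi^{-\Lambda^\vee} K \right) } \ ,
\end{align*}
independently of $gK$. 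This is the required uniformity, and it also re-derives $\Card_{G/K}\left( K \varpi^{-\Lambda^\vee} K \right) = \sum_{\mu^\vee \in W\Lambda^\vee} q^{\langle \Lambda^\vee - w_0 \mu^\vee, \rho \rangle}$, confirming that \eqref{eq:mu_rho} is a probability distribution.

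The only genuinely non-formal ingredient is the geometric input already packaged in Proposition \ref{proposition:value_S} and Corollary \ref{corollary:support}: that precisely the Iwasawa strata indexed by $W\Lambda^\vee$ meet $K\varpi^{-\Lambda^\vee}K$, each contributing $q^{\langle \Lambda^\vee - w_0 \mu^\vee, \rho \rangle}$ cosets. Everything else --- the reduction of a $B\left(\Oc\right)$-bi-invariant law to its image on $B/B\left(\Oc\right)$, the coset simplification, and the transitivity argument for the conditional uniformity --- is bookkeeping; the main point is just to arrange these so that the representation-theoretic weights in \eqref{eq:mu_rho} cancel cleanly against the conditional uniform laws.
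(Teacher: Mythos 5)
Your proof is correct and rests on exactly the same ingredients as the paper's argument: the minuscule support statement (only Iwasawa strata indexed by $W\Lambda^\vee$ meet $K\varpi^{-\Lambda^\vee}K$), Corollary \ref{corollary:support} with its coset count $q^{\langle \Lambda^\vee - w_0\mu^\vee, \rho\rangle}$, and $B\left(\Oc\right)$-bi-invariance of the increments. The difference is purely organizational: the paper decomposes the increment directly and reads off the law of $\mu^\vee_\rho$ from the uniformity of $b_t K$ on the cosets of $K\varpi^{-\Lambda^\vee}K$, whereas you verify that the proposed bi-invariant law has the same uniform projection to $G/K$ --- which has the side benefit of making fully explicit the step the paper states briskly, namely that the factors $b_1,b_2$ ``can be picked Haar distributed''.
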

\begin{proof}
Let $b \in B$ be a random increment of $\left( B_t; t \in \N \right)$. Because $\Lambda^\vee$ is minuscule, $N \varpi^{-\mu^\vee} K \cap K \varpi^{-\Lambda^\vee} K = \emptyset$ unless $\mu^\vee \in W \Lambda^\vee$. Hence, there exists a random $\mu^\vee_\rho \in W \Lambda^\vee$ so that $b \in N \varpi^{-\mu^\vee_\rho} K$ in the Iwasawa decomposition. From corollary \ref{corollary:support}, we even have $b \in N\left( \Oc \right) \varpi^{-\mu^\vee_\rho} K$. At this moment, the support of the distribution of $b$ is well identified.

Now, write $b = b_1 \varpi^{-\mu^\vee_\rho} b_2$ with $b_1, b_2 \in B\left( \Oc \right)$ - not a unique expression. As the distribution of $b$ is bi-invariant under $B\left( \Oc \right)$, we see that $b_1$ and $b_2$ can be picked Haar distributed on $B\left( \Oc \right)$. This gives equation \eqref{eq:B_increment_law}. 

We are only left with the task of identifying the law of $\mu_\rho^\vee$. As $bK = g K$ with $g \eqlaw \Uc\left( K \right) \varpi^{-\Lambda^\vee} \Uc\left( K \right)$, $bK$ is a uniformly chosen right $K$ coset in $K \varpi^{-\Lambda^\vee} K$. Therefore, for a fixed $\mu^\vee$, $\P\left( \mu^\vee_\rho = \mu^\vee \right)$ is proportional to the number of such cosets that are of the form $N \varpi^{-\mu^\vee} K$. This number, $q^{\langle \Lambda^\vee - w_0 \mu^\vee, \rho \rangle} $, is given in the proof of corollary \ref{corollary:support}, hence the formula in equation \eqref{eq:mu_rho}.
\end{proof}

\subsection{Penalization and proof of Theorem \ref{thm:main_result2}}
\label{subsection:proof_thm_2}

We will now penalize the random walk $\left( B_t; t \in \N \right)$ by its diagonal part in order to obtain $\left( B_t\left( W^{(z)} \right) ; t \in \N \right)$. The following lemma tells us how the distribution of their increments are related.

\begin{lemma}
\label{lemma:penalisation}
For all $f \in \Fc\left( B \right)$, we have that:
\begin{align*}
  & \E\left[ f\left( B_t^{-1} B_{t+1} \right) \right]\\
= & \frac{ q^{\langle \Lambda^\vee, \rho \rangle} \ch V\left( \Lambda^\vee \right)(z) }
         { \Card_{G/K}\left( K \varpi^{-\Lambda^\vee} K \right) } 
    \E\left[ \left( \chi^{-1} \delta^{-\half} f \right)\left( B_t^{-1}\left( W^{(z)} \right) B_{t+1}\left( W^{(z)} \right) \right) \right] \ .
\end{align*}
\end{lemma}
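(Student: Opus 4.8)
The plan is to reduce both expectations to sums over the Weyl orbit $W\Lambda^\vee$ and to match the weights term by term. First I would observe that, $\left( B_t ; t \in \N \right)$ and $\left( B_t\left( W^{(z)} \right) ; t \in \N \right)$ being left-invariant with i.i.d.\ increments, a single increment law governs each. By Proposition \ref{proposition:B_increments}, $B_t^{-1} B_{t+1} \eqlaw \Uc\left( B\left( \Oc \right) \right) \varpi^{-\mu^\vee_\rho} \Uc\left( B\left( \Oc \right) \right)$ with $\mu^\vee_\rho$ distributed on $W\Lambda^\vee$ by \eqref{eq:mu_rho}; and directly from \eqref{eq:B_def_2}, the increment of $\left( B_t\left( W^{(z)} \right) \right)$ is $b_{t+1}' \varpi^{-\left( W^{(z)}_{t+1} - W^{(z)}_t \right)} b_{t+1} \eqlaw \Uc\left( B\left( \Oc \right) \right) \varpi^{-\nu^\vee} \Uc\left( B\left( \Oc \right) \right)$, the three factors being independent and $\nu^\vee$ distributed on $W\Lambda^\vee$ by \eqref{eq:rw_lattice}. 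Hence, for any $h \in \Fc\left( B \right)$ and any $W\Lambda^\vee$-valued $\sigma^\vee$ independent of the Haar factors, $\E\left[ h\left( \Uc\left( B\left( \Oc \right) \right) \varpi^{-\sigma^\vee} \Uc\left( B\left( \Oc \right) \right) \right) \right]$ equals $\sum_{\mu^\vee \in W\Lambda^\vee} \P\left( \sigma^\vee = \mu^\vee \right) \E\left[ h\left( \Uc\left( B\left( \Oc \right) \right) \varpi^{-\mu^\vee} \Uc\left( B\left( \Oc \right) \right) \right) \right]$. For $h = f$ I abbreviate the $\mu^\vee$-th summand's factor $\E\left[ f\left( \Uc\left( B\left( \Oc \right) \right) \varpi^{-\mu^\vee} \Uc\left( B\left( \Oc \right) \right) \right) \right]$ by $I_f\left( \mu^\vee \right)$.

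Next I would use that $\chi^{-1}\delta^{-\half}$ is a multiplicative character of $B$ trivial on $N$ and on $T\left( \Oc \right)$, hence trivial on $B\left( \Oc \right) = N\left( \Oc \right) T\left( \Oc \right)$; thus it is constant on each double coset $B\left( \Oc \right) \varpi^{-\mu^\vee} B\left( \Oc \right)$, with value $\chi\left( \varpi^{-\mu^\vee} \right)^{-1} \delta\left( \varpi^{-\mu^\vee} \right)^{-\half} = e^{-\langle z, \mu^\vee \rangle} q^{\langle \rho, \mu^\vee \rangle}$ by \eqref{eq:def_chi_z} and \eqref{eq:def_modular}. Consequently, taking $h = \chi^{-1}\delta^{-\half}f$ in the expansion above and pulling out this constant on each double coset, $\E\left[ \left( \chi^{-1}\delta^{-\half} f \right)\left( \Uc\left( B\left( \Oc \right) \right) \varpi^{-\sigma^\vee} \Uc\left( B\left( \Oc \right) \right) \right) \right] = \sum_{\mu^\vee \in W\Lambda^\vee} \P\left( \sigma^\vee = \mu^\vee \right) e^{-\langle z, \mu^\vee \rangle} q^{\langle \rho, \mu^\vee \rangle} I_f\left( \mu^\vee \right)$.

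It remains to put the pieces together. Applying this last identity with $\sigma^\vee = \nu^\vee$ distributed by \eqref{eq:rw_lattice}, and recalling $\ch V\left( \Lambda^\vee \right) = \sum_{\mu^\vee \in W\Lambda^\vee} e^{\mu^\vee}$ since $\Lambda^\vee$ is minuscule, the factors $e^{\pm \langle z, \mu^\vee \rangle}$ cancel and the right-hand side of the lemma becomes $\dfrac{q^{\langle \Lambda^\vee, \rho \rangle}}{\Card_{G/K}\left( K \varpi^{-\Lambda^\vee} K \right)} \sum_{\mu^\vee \in W\Lambda^\vee} q^{\langle \rho, \mu^\vee \rangle} I_f\left( \mu^\vee \right)$. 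On the other hand, applying the first expansion with $\sigma^\vee = \mu^\vee_\rho$ distributed by \eqref{eq:mu_rho}, the weight attached to $\mu^\vee$ is $q^{\langle \Lambda^\vee - w_0 \mu^\vee, \rho \rangle} / \Card_{G/K}\left( K \varpi^{-\Lambda^\vee} K \right)$; since $w_0 \rho = -\rho$ one has $\langle w_0 \mu^\vee, \rho \rangle = \langle \mu^\vee, w_0 \rho \rangle = -\langle \rho, \mu^\vee \rangle$, so this weight equals $q^{\langle \Lambda^\vee, \rho \rangle} q^{\langle \rho, \mu^\vee \rangle} / \Card_{G/K}\left( K \varpi^{-\Lambda^\vee} K \right)$, whence $\E\left[ f\left( B_t^{-1} B_{t+1} \right) \right]$ equals the very same expression. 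Comparing the two yields the claimed formula. I do not expect a genuine obstacle here: the argument is pure bookkeeping on top of Proposition \ref{proposition:B_increments} and \eqref{eq:rw_lattice}, the one nontrivial ingredient being the classical identity $w_0 \rho = -\rho$, which is precisely what makes the powers of $q$ align.
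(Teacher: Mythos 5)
Your argument is correct and is essentially the paper's own proof: both rest on Proposition \ref{proposition:B_increments}, the constancy of the character $\chi^{-1}\delta^{-\half}$ on the double cosets $B\left( \Oc \right) \varpi^{-\mu^\vee} B\left( \Oc \right)$ with the values from \eqref{eq:def_chi_z} and \eqref{eq:def_modular}, and the increment law \eqref{eq:rw_lattice}. The only difference is presentational — you expand both expectations into the common sum $\sum_{\mu^\vee} q^{\langle \rho, \mu^\vee \rangle} I_f\left( \mu^\vee \right)$ and make explicit the identity $w_0\rho = -\rho$ that the paper uses implicitly when combining the exponents of $q$, whereas the paper writes the same computation as a single chain of equalities.
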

\begin{proof}
We have:
\begin{align*}
  & \E\left[ f\left( B_t^{-1} B_{t+1} \right) \right]\\
\stackrel{ \textrm{Prop. } \ref{proposition:B_increments}}{=} \ 
  & \sum_{\mu^\vee \in W \Lambda^\vee} 
    \frac{ q^{\langle \Lambda^\vee - w_0 \mu^\vee, \rho \rangle} }
         { \Card_{G/K}\left( K \varpi^{-\Lambda^\vee} K \right) } 
    \E\left[ f\left( \Uc\left( B\left( \Oc \right) \right) \varpi^{-\mu^\vee} \Uc\left( B\left( \Oc \right) \right) \right) \right]\\
\stackrel{ \textrm{Eq. } (\ref{eq:def_chi_z}, \ref{eq:def_modular}) }{=}
  & \sum_{\mu^\vee \in W \Lambda^\vee} 
    \frac{ q^{\langle \Lambda^\vee, \rho \rangle} e^{\langle z, \mu^\vee \rangle}
    \E\left[ \left( \chi^{-1} \delta^{-\half} f \right)\left( \Uc\left( B\left( \Oc \right) \right) \varpi^{-\mu^\vee} \Uc\left( B\left( \Oc \right) \right) \right) \right] }
         { \Card_{G/K}\left( K \varpi^{-\Lambda^\vee} K \right) }\\
\stackrel{ \textrm{Eq. } \eqref{eq:rw_lattice} }{=} \ 
  & \frac{ q^{\langle \Lambda^\vee, \rho \rangle} \ch V\left( \Lambda^\vee \right)(z) }
         { \Card_{G/K}\left( K \varpi^{-\Lambda^\vee} K \right) } 
    \E\left[ \left( \chi^{-1} \delta^{-\half} f \right)\left( B_t^{-1}\left( W^{(z)} \right) B_{t+1}\left( W^{(z)} \right) \right) \right] \ .
\end{align*} 
\end{proof}

Now, the $\alpha$-harmonicity in equation \eqref{eq:B_harmonicity} will translate to a strict harmonicity property, which was the basis of our Poisson kernel formula. For more convenient notations, let us define:
$$ \forall b \in B, f_z\left( b \right) := \delta\left( b \right)^{-\half} \chi\left( b \right)^{-1} \Wc_z\left( b \right) \ .$$
Then we have:
\begin{align*}
  \quad \ & \E\left( f_z\left( B_{t+1}\left( W^{(z)} \right) \right) \ | \ \Fc^B_t, B_t = b \right)\\
= \quad \ & \E\left( f_z\left( b B_t^{-1}\left( W^{(z)} \right) B_{t+1}\left( W^{(z)} \right) \right) \right)\\
= \quad \ & \E\left[ \left( \chi^{-1} \delta^{-\half} \Wc_z\right)\left[ b B_t^{-1}\left( W^{(z)} \right) B_{t+1}\left( W^{(z)} \right) \right] \right]\\
\stackrel{ \textrm{Lemma } \ref{lemma:penalisation}}{=}
          & \frac{ \Card_{G/K}\left( K \varpi^{-\Lambda^\vee} K \right) } 
               { q^{\langle \Lambda^\vee, \rho \rangle} \ch V\left( \Lambda^\vee \right)(z) }
          \delta\left( b \right)^{-\half} \chi\left( b \right)^{-1}
          \E\left[ \Wc_z\left( b B_t^{-1} B_{t+1} \right) \right]\\
\stackrel{ \textrm{Eq } \eqref{eq:B_harmonicity}}{=}
        \ &  \frac{ \Card_{G/K}\left( K \varpi^{-\Lambda^\vee} K \right) } 
                  { q^{\langle \Lambda^\vee, \rho \rangle} \ch V\left( \Lambda^\vee \right)(z) }
             \delta\left( b \right)^{-\half} \chi\left( b \right)^{-1}
	  \frac{ \Sc\left( \Lambda^\vee \right)(\chi) }
	      { \Card_{G/K}\left( K \varpi^{-\Lambda^\vee} K \right) }
    \Wc_z\left( b \right)\\
= \quad & \frac{ \Sc\left( \Lambda^\vee \right)(\chi) } 
               { q^{\langle \Lambda^\vee, \rho \rangle} \ch V\left( \Lambda^\vee \right)(z) }
    f_z\left( b \right)\\
\stackrel{ \textrm{Eq } \eqref{eq:spherical_function_minuscule}}{=}
  & f_z\left( b \right) \ .
\end{align*}
This concludes the proof of Theorem \ref{thm:main_result2}.

\section{Existence of \texorpdfstring{$N_\infty\left( W^{(z)} \right)$}{limit in N} }
\label{section:N_infty_CV}

In this section, we obtain the existence of the limiting random variable $N_\infty\left( W^{(z)} \right)$ (Proposition \ref{proposition:N_infty_CV}) as a consequence of the deterministic Theorem \ref{thm:CV}. The latter states that a sequence in $N$ with multiplicative increments converging to the identity must converge. It is an analogue of the standard fact that series in ultrametric spaces converge as soon as their general term goes to zero. Such a result is intuitive if one represents $N_t\left( W^{(z)} \right)$ in a group of upper triangular matrices thanks to the Lie-Kolchin theorem. Indeed the coefficients on top of the diagonal will follow an additive series which converges without difficulty because of the ultrametric topology. Using a recurrence, one hopes to propagate the result to other coefficients in stages. Morally, this is the strategy of proof we follow, even if we choose to work intrinsically.

Let $\height: \Phi^+ \rightarrow \N^*$ be the height function, meaning that if $\beta = \sum_{\alpha \in \Delta} c_\alpha \alpha\in \Phi^+$ then the height of $\beta$ is $\height(\beta) = \sum_{\alpha \in \Delta} c_\alpha$.  Now, recall that the group $N$ is generated by one parameter subgroups $\left( x_{-\beta}\left( \Kc \right) ; \beta \in \Phi^+ \right)$. For every $i \in \N$, define the subgroups:
$$ N^i := \langle x_{-\beta}\left( \Kc \right) ; \height(\beta) \geq i \rangle \ .$$

\begin{theorem}
\label{thm:CV} 
Let $i \in \N$ and $\left( N^{i}_t ; t=0,1,2, \dots \right)$ be any sequence in $N^i$. Set:
\begin{align}
\label{eq:incr_ni}
\forall t \in \N, \ N^{i}_{t+1} & = N^{i}_{t} \Delta^i_t \ .
\end{align}
If
$$ \Delta^i_t \stackrel{t \rightarrow \infty}{\longrightarrow} \Id \ ,$$
then $\left( N^{i}_t ; t=0,1,2, \dots \right)$ is convergent.
\end{theorem}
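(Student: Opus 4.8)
The plan is to work in ``coordinates of the second kind'' on $N^i$ and run an induction on the height of roots, reducing the convergence of $(N^i_t)$ to the elementary fact that, in a complete non-Archimedean field, a telescoping series converges as soon as its general term tends to zero.

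First I would record the structure of $N^i$. Since the set $\Psi_i := \{\beta \in \Phi^+ : \height(\beta) \geq i\}$ is closed under addition inside $\Phi^+$ (if $\beta,\gamma \in \Psi_i$ and $\beta+\gamma \in \Phi^+$ then $\height(\beta+\gamma) \geq 2i \geq i$), the group $N^i = \langle x_{-\beta}(\Kc) : \beta \in \Psi_i \rangle$ is exactly the product set $\prod_{\beta \in \Psi_i} x_{-\beta}(\Kc)$, and after fixing an ordering of $\Psi_i$ that is non-decreasing in height, every $n \in N^i$ has a unique normal form $n = \prod_{\beta \in \Psi_i} x_{-\beta}(c_\beta(n))$ with $c_\beta(n) \in \Kc$ (this is the normal form of \cite{Steinberg}, lemmas 17--18, already invoked in the proof of Corollary \ref{corollary:support}). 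These coordinates identify $N^i$ with an affine space over $\Kc$, convergence in $N^i$ being coordinatewise; in particular $\Delta^i_t \to \Id$ means that each coordinate $d_\beta(t) := c_\beta(\Delta^i_t)$ tends to $0$.

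Next I would isolate the one feature of group multiplication that is needed. Because commutators of root subgroups only raise heights, and because $x_{-\gamma}(0) = \Id$, the multiplication law in these coordinates has the shape
$$ c_\beta(n n') = c_\beta(n) + c_\beta(n') + P_\beta\big((c_\gamma(n))_{\height(\gamma) < \height(\beta)},\ (c_\gamma(n'))_{\height(\gamma) < \height(\beta)}\big), $$
for a fixed polynomial $P_\beta$ with $\Z$-coefficients in finitely many variables, which vanishes identically when all of its ``$n'$''-variables are set to $0$ (since $n' = \Id$ forces $nn' = n$). Specializing to $n = N^i_t$ and $n' = \Delta^i_t$ yields $c_\beta(t+1) - c_\beta(t) = d_\beta(t) + P_\beta\big((c_\gamma(t)),(d_\gamma(t))\big)$. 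Establishing this shape --- in particular that $c_\beta(nn')$ depends only on the coordinates of $n$ and $n'$ of height $\le \height(\beta)$, the top layer contributing additively --- from the Chevalley commutator formula is the one genuinely computational point, and I expect it to be the main obstacle; it is however pure bookkeeping with the nilpotent filtration of $N$ and holds over $\Z$, so the positive-characteristic case $\Kc = \F_q((T))$ is covered with no extra work.

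Finally I would conclude by induction on $j = \height(\beta)$, running from $j = i$ (the minimal height in $\Psi_i$) upward to the height of the highest root. In the base case there are no variables of height $< i$, so $P_\beta \equiv 0$ and $c_\beta(t+1) - c_\beta(t) = d_\beta(t) \to 0$; by telescoping together with the completeness of $\Kc$ (Lemma 1.3, Chapter 4 of \cite{Cassels}) the sequence $(c_\beta(t))_t$ converges. For the inductive step, if $(c_\gamma(t))_t$ converges for every $\gamma$ with $\height(\gamma) < j$, then these sequences have finite limits while $d_\gamma(t) \to 0$; since $P_\beta$ is continuous and vanishes whenever its $d$-variables are zero, $P_\beta\big((c_\gamma(t)),(d_\gamma(t))\big) \to 0$, and combined with $d_\beta(t) \to 0$ this gives $c_\beta(t+1) - c_\beta(t) \to 0$, whence $(c_\beta(t))_t$ converges by the same completeness argument. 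Once all coordinates converge, $(N^i_t)$ converges in $N^i$, and a fortiori in $N$, which is the assertion.
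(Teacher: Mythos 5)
Your proof is correct, and it rests on the same two pillars as the paper's argument: Steinberg's normal form along the height filtration of $N$, and the ultrametric fact that a sequence converges as soon as its successive differences tend to zero. The difference is in the packaging. The paper runs a \emph{backward} induction on the subgroup index $i$: it only needs that the bottom-layer coordinate map $\varphi^i$ (heights exactly $i$) is a group homomorphism; that layer is handled by the telescoping argument, and the remaining part is pushed into $N^{i+1}$ with the conjugated increments $\Delta^{i+1}_t = N^{\widehat{i}}_t \Delta^i_t \bigl( N^{\widehat{i}}_{t+1} \bigr)^{-1} \rightarrow \Id$, so no explicit multiplication law in all coordinates is ever required. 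You instead fix $i$, take full coordinates of the second kind, and induct \emph{upward} on height, which needs the stronger structural statement that multiplication is triangular with additive top layer in these coordinates. That statement is indeed correct and is the standard bookkeeping you anticipate: modulo $N^{j+1}$ the height-$j$ root subgroups are central (since $\left[ N, N^j \right] \subset N^{j+1}$) and commute among themselves, so the height-$j$ coordinates of a product are the sums of those of the two factors plus a polynomial $P_\beta$ in the lower-height coordinates, and your observation that $P_\beta$ vanishes when the second factor is $\Id$ is exactly what makes $P_\beta\bigl( (c_\gamma(t)), (d_\gamma(t)) \bigr) \rightarrow 0$ along the sequence (one also uses, as you note, that the normal-form coordinates are a homeomorphism onto affine space, so convergence is coordinatewise in both directions). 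In short, your route trades the paper's homomorphism-plus-conjugation trick for explicit coordinate bookkeeping; both are rigorous, the paper's version needing slightly less structural input, yours being closer to the ``matrix coefficients in stages'' heuristic that the paper itself mentions before choosing to work intrinsically.
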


The proof will require knowledge of the structure of the groups $N^i$. This is obtained from Steinberg's presentation in terms of generators and relations.

\begin{theorem}[ \cite{Steinberg} p. 22-26 ]
\label{thm:structure_N}
The following assertions hold:
\begin{enumerate}
 \item[(i)] For every $i \in \N$, the subgroup $N^i$ is a normal subgroup of $N$. In particular $N^{i+1}$ is normal in $N^{i}$.
 \item[(ii)] $\left[N, N^i \right] \subset N^{i+1}$. In particular $\left[N^i, N^i \right] \subset N^{i+1}$ and $N^{i+1} \backslash N^i$ is abelian.
 \item[(iii)] Fix an order on the roots of height $i$. Then for every $n^i \in N^i$, there exists a unique $n^{i+1} \in N^{i+1}$ and unique coefficients $\left( \theta^\beta \right)_{\left( \height(\beta)=i \right)}$ in the field $\Kc$ such that
       \begin{align}
       \label{eq:n_i_factorisation}
       n^i = & n^{i+1} n^{\widehat{i}}
       \end{align}
       where 
       $$ n^{\widehat{i}} = \prod_{\height(\beta)=i} x_{-\beta}\left( \theta^\beta \right). $$
       Th product respects the previously fixed order. Moreover, $n^{i+1} \in N^{i+1}$ may depend on the choice of order on the roots $\left( \beta ; \height(\beta)=i \right)$ but the coefficients $\left( \theta^\beta \right)_{\left( \height(\beta)=i \right)}$ do not.
 \item[(iv)] The map
       $$ \begin{array}{cccc}
               \varphi^i: & N^i & \rightarrow & \Kc^{ \Card\left\{ \beta, \ \height(\beta) = i \right\} }\\
                          & n^i & \mapsto     & \left( \theta^\beta \right)_{\left( \height(\beta)=i \right)}
               \end{array}
       $$
       is a continuous group homomorphism.
\end{enumerate}
\end{theorem}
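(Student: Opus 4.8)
The plan is to deduce all four assertions from a single structural input about the unipotent group $N$ — Steinberg's parametrization of $N$ by root subgroups (\cite{Steinberg}, Lemmas 17--18, already invoked in the proof of Corollary \ref{corollary:support}) — together with the Chevalley commutator formula. The one observation that makes everything run is this: for $\beta,\beta'\in\Phi^+$, the commutator $[x_{-\beta}(t),x_{-\beta'}(u)]$ is a product of factors $x_{-(j\beta+k\beta')}(c_{jk}\,t^{j}u^{k})$ with $j,k\geq 1$ and $j\beta+k\beta'\in\Phi^+$, and since the height function is additive, $\height(j\beta+k\beta')=j\,\height(\beta)+k\,\height(\beta')\geq\height(\beta)+\height(\beta')$; hence every such commutator lies in $N^{\height(\beta)+\height(\beta')}$. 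I would record this first.

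For \textbf{(i)}, I would use that $N$ is generated by the simple root subgroups $x_{-\alpha}(\Kc)$, $\alpha\in\Delta$, and check that conjugating a generator $x_{-\beta}(u)$ of $N^i$ (where $\height(\beta)\geq i$) by $x_{-\alpha}(t)$ stays in $N^i$: one has $x_{-\alpha}(t)x_{-\beta}(u)x_{-\alpha}(t)^{-1}=[x_{-\alpha}(t),x_{-\beta}(u)]\,x_{-\beta}(u)$, and the commutator lies in $N^{1+i}\subseteq N^i$ by the observation above, so the product lies in $N^i$. As the same applies with $x_{-\alpha}(t)^{-1}=x_{-\alpha}(-t)$, conjugation by every generator of $N$ preserves $N^i$, whence $N^i\trianglelefteq N$ and a fortiori $N^{i+1}\trianglelefteq N^i$. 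For \textbf{(ii)}, the same observation gives $[x_{-\alpha}(t),x_{-\beta}(u)]\in N^{1+i}=N^{i+1}$ for any positive root $\alpha$ and any $\beta$ with $\height(\beta)\geq i$; I would propagate this from generators to all of $N\times N^i$ using the identities $[xy,z]={}^{x}[y,z]\,[x,z]$ and $[x,yz]=[x,y]\,{}^{y}[x,z]$ together with the normality from (i), concluding $[N,N^i]\subseteq N^{i+1}$, and in particular that $N^{i+1}\backslash N^i$ is abelian.

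For \textbf{(iii)} and \textbf{(iv)} I would pass to the quotient. By Steinberg's parametrization, for any fixed order on $\Phi^+$ the product map $\prod_{\beta\in\Phi^+}x_{-\beta}(\Kc)\to N$ is bijective; restricting to the roots of height $\geq i$, resp. $\geq i+1$, gives the analogous bijections for $N^i$, resp. $N^{i+1}$ (these being subgroups by (i) and the commutator formula). It follows that $N^{i+1}\backslash N^i$, abelian by (ii), is canonically the direct sum $\bigoplus_{\height(\beta)=i}(\Kc,+)$, a coset being freely parametrized by the coordinates $(\theta^{\beta})_{\height(\beta)=i}$ on the height-$i$ root subgroups. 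Setting $\varphi^i$ to be the quotient map $\pi\colon N^i\to N^{i+1}\backslash N^i$ followed by this identification produces a continuous group homomorphism — $N^{i+1}$ being a closed normal subgroup and the identification an isomorphism of topological groups — which gives (iv). For (iii), existence of $n^i=n^{i+1}n^{\widehat{i}}$ follows by writing $n^i$ as a word in the generators $x_{-\beta}(\cdot)$, $\height(\beta)\geq i$, and using the commutator formula to push every factor of height $>i$ to the left into the normal subgroup $N^{i+1}$; uniqueness and order-independence both hold because $\theta^{\beta}$ is simply the $\beta$-coordinate of $\pi(n^i)$ in $\bigoplus_{\height(\beta)=i}\Kc$, which does not see the order of the product since $N^{i+1}\backslash N^i$ is abelian.

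The only non-formal ingredient is the freeness of the root-subgroup parametrization of $N$ and of its restrictions to the $N^i$; that is exactly Steinberg's Lemmas 17--18, which I would cite rather than reprove. Everything else is bookkeeping with the Chevalley commutator formula, so I would present (i) and (ii) with full detail and compress (iii)--(iv) into the quotient argument above, referring to \cite{Steinberg}, pp. 22--26, for the parametrization.
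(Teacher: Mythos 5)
Your proof is correct, and it reaches the same conclusions as the paper while redistributing the work. The paper simply cites Steinberg (Corollary 4, p.~26) for (i) and (ii) and Lemma 17 for the existence and uniqueness in (iii), whereas you rederive (i) and (ii) from the Chevalley commutator formula together with additivity of the height --- a sound and more self-contained route. For (iv) the two arguments genuinely diverge: the paper proves the identity $\varphi^i(xy)=\varphi^i(x)+\varphi^i(y)$ by a two-step reduction to the case where $x$ is a single height-$i$ root element and $y$ is a height-$i$ product beginning with the same root, while you identify $N^{i+1}\backslash N^i$ with $\bigoplus_{\height(\beta)=i}(\Kc,+)$ as topological groups via the restricted Steinberg parametrisation and read off $\varphi^i$ as the quotient map; this buys uniqueness, order-independence, the homomorphism property and continuity all at once, at the small price of checking (via abelianness of the quotient, exactly as the paper does for order-independence) that the coset parametrisation $(\theta^\beta)\mapsto N^{i+1}\prod_\beta x_{-\beta}(\theta^\beta)$ is itself additive. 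One minor slip: in (i) you conjugate only by simple root subgroups on the grounds that they generate $N$; generation of $N$ by the \emph{simple} root subgroups can fail when structure constants vanish in small characteristic, but this is harmless --- $N$ is by construction generated by all the $x_{-\beta}(\Kc)$, $\beta\in\Phi^+$, and your conjugation computation works verbatim for any such generator since $\height(\beta)\geq 1$.
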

\begin{proof}
Let us give pointers to the proof. The two first claims are entirely contained in \cite{Steinberg} Corollary 4 p.26. For the third claim, existence and uniqueness of the factorisation \eqref{eq:n_i_factorisation} is Lemma 17 p.24 from \cite{Steinberg}. To see that the coefficients $\left( \theta^\beta \right)_{\left( \height(\beta)=i \right)}$ do not depend on the order, let us consider the product $\prod_{\height(\beta)=i} x_{-\beta}\left( \theta^\beta \right) \in N^i$ for two different orders. Since $N^{i+1} \backslash N^i$ is abelian, these two products are equal modulo $N^{i+1}$ on the left, hence the result.

For the last claim, the map $\varphi^i$ is defined unambiguously thanks to (iii). In order to prove the homomorphism property:
$$ \forall \left( x,y \right) \in N^i \times N^i, \ \varphi^i\left( xy \right) =  \varphi^i\left( x\right) + \varphi^i\left( y \right),$$
we can reduce the problem to $x = x_{-\beta}(t)$ for a certain $\beta$ with $\height(\beta)=i$. Indeed, one can factor $x = x^{i+1} x^{\widehat{i}}$ as in equation \eqref{eq:n_i_factorisation}, discard the term in $x^{i+1} \in N^{i+1}$ because $\varphi^i$ is left $N^{i+1}$ invariant and treat each term in the product $x^{\widehat{i}}$ separately. We can reduce further to $y = y^{\widehat{i}} = \prod_{\height(\gamma)=i} x_{-\gamma}\left( \theta^\gamma \right)$ with an order such that the root in the left-most term is $\gamma = \beta$.  Indeed, write $y = y^{i+1} y^{\widehat{i}}$ and notice that:
$$ \varphi^i\left( x y \right) = \varphi^i\left( x y^{i+1} x^{-1} x y^{\hat{i}} \right)
                               = \varphi^i\left( x y^{\widehat{i}} \right),
$$
as $x y^{i+1} x^{-1} \in N^{i+1}$. After these two reductions, the statement becomes immediate.
\end{proof}

\begin{proof}[ {\bf Proof of Theorem \ref{thm:CV}} ]
We proceed by backward induction on the index $i$.

Because $\Phi$ is finite, there exists a maximal height and $N^i = \{\Id\}$ for $i \gg 1$. Therefore, for $i$ large enough, any sequence $N^i_t$ is stationary and convergence is immediate. 

Now let us fix an index $i$ and assume by induction hypothesis that the theorem holds for the index $i+1$. Let $\left( N^{i}_t ; t=0,1,2, \dots \right)$ be a sequence as in Theorem \ref{thm:CV}. Thanks to the second claim in Theorem \ref{thm:structure_N}, there are two sequences $N_t^{i+1} \in N^{i+1}$ and $N_t^{\widehat{i}} \in N^{i}$ such that:
\begin{align}
\label{eq:sequence_factorisation}
N_t^{i}      =  & N_t^{i+1} N_t^{\widehat{i}} \ ,
\end{align}
where
$$ N_t^{\widehat{i}} = \prod_{\height(\beta)=i} x_{-\beta}\left( \theta^\beta_t \right) \ .$$
The proof will be finish once we establish that both sequences $N_t^{i+1}$ and $N_t^{\widehat{i}}$ converge.

On the one hand, $\left( \theta^\beta_t \right)_{\left( \height(\beta)=i \right)} = \varphi^i\left( N^i_{t} \right) = \varphi^i\left( N^{\widehat{i}}_{t} \right)$ where $\varphi^i$ is the group homomorphism defined in Theorem \ref{thm:structure_N}. Clearly, convergence for $N^{\widehat{i}}_{t}$ is equivalent to the convergence of $\varphi^i\left( N^{\widehat{i}}_{t} \right)$. By applying $\varphi^i$ to equation \eqref{eq:incr_ni}, we obtain:
$$ \forall t \in \N, \ \varphi^i\left( N^i_{t+1} \right) = \varphi^i\left( N^i_{t} \right) + \varphi^i\left( \Delta^i_{t} \right) \ .$$
Recall $\Delta^i_{t} \stackrel{t \rightarrow \infty}{\longrightarrow} \Id$ and therefore $\varphi^i\left( \Delta^i_{t} \right) \stackrel{t \rightarrow \infty}{\longrightarrow} 0$. Because of the ultrametric topology on $\Kc^{ \Card\left\{ \beta, \ \height(\beta) = i \right\} }$, the sequence $\varphi^i\left( N^i_{t} \right)$ converges. Notice that this part of the proof is virtually the same as in the rank $1$ example \ref{example:PGL2}.

On the other hand, combining recurrence \eqref{eq:incr_ni} and equation \eqref{eq:sequence_factorisation} yields
$$ N_{t+1}^{i+1} = N_{t}^{i+1} N_t^{\widehat{i}} \Delta^{i}_t \left( N_{t+1}^{\widehat{i}} \right)^{-1} \ .$$
By setting $\Delta^{i+1}_t := N_t^{\widehat{i}} \Delta^{i}_t \left( N_{t+1}^{\widehat{i}} \right)^{-1} \stackrel{t \rightarrow \infty}{\longrightarrow} \Id$ , we see that the induction hypothesis applies to the sequence $N_t^{i+1}$, which is therefore convergent.
\end{proof}


\section{Acknowledgement}
The author is grateful to Dan Bump, Paul-Olivier Dehaye and Simon Pepin-Lehalleur for guidance and many helpful discussions. Also, I would like to mention Gautham Chinta and Philippe Bougerol for their encouragement in pursuing this probabilistic approach. 

\small
\bibliographystyle{halpha}
\bibliography{Bib_CS}

\end{document}